\DeclarePairedDelimiter{\ceil}{\lceil}{\rceil}
\newcommand{\be}{\begin{equation}}
\newcommand{\ee}{\end{equation}}
\title{Thue's inequalities and the hypergeometric method}
\author{Shabnam Akhtari}
\address{Department of Mathematics\\
Fenton Hall\\
University of Oregon\\
Eugene, OR 97403-1222 USA}
\email {akhtari@uoregon.edu}
\author{N. Saradha}
\address{School of Mathematics\\
Tata Institute of Fundamental Research\\
Mumbai (India) 400 005}
\email{saradha@math.tifr.res.in}
\author{Divyum Sharma}
\address{School of Mathematics\\
Tata Institute of Fundamental Research\\
Mumbai (India) 400 005}
\email{divyum@math.tifr.res.in}
\date{\today}                               
\subjclass[2000]{11D45}
\keywords{Thue equations, Thue inequalities, Diagonalizable forms, The hypergeometric method}
\begin{document}

\newtheorem{thm}{Theorem}[section]
\newtheorem{prop}[thm]{Proposition}
\newtheorem{lemma}[thm]{Lemma}
\newtheorem{cor}[thm]{Corollary}
\newtheorem{conj}[thm]{Conjecture}
\newtheorem{defin}{Definition}[section]

\begin{abstract}
We establish upper bounds for the number of primitive integer
solutions to  inequalities of the shape $0<|F(x, y)| \leq h$, where $F(x , y) =(\alpha x + \beta y)^r -(\gamma x + \delta y)^r \in \mathbb{Z}[x ,y]$,  $\alpha$, $\beta$, $\gamma$ and $\delta$  are algebraic constants with
$\alpha\delta-\beta\gamma \neq 0$, and  $r \geq 5$ and 
$h$ are integers. 
As an important application, we pay special attention to  binomial Thue's inequaities  $|ax^r - by^r| \leq c$. The proofs are based on the hypergeometric method of Thue and Siegel and its refinement by Evertse. 
\end{abstract}

\maketitle

\section{Introduction and statements of the results}\label{Intro}

 As a consequence of his improvement of Liouville's theorem on approximation of algebraic numbers by rationals, Thue \cite{Thu1} proved that if $F(x,y)$ is a binary form  with integer coefficients, having at least three pairwise non-proportional linear factors in its factorization over $\mathbb{C}$, and $h$ is
a non-zero integer then the Diophantine equation 
$$F(x,y)=h$$
has only a finite number of integer solutions.  Such equations are called Thue equations, and inequalities of the shape
\begin{equation}\label{firstineq}  
0<|F(x,y)| \leq h
\end{equation}
are called Thue inequalities.

Thue used Pad\'{e} approximation to binomial functions to study some families of Thue equations (see for example \cite{Thu2}). 
Later Siegel \cite{Sie2} identified the
approximating polynomials in  Thue's method  as
hypergeometric polynomials. 
The hypergeometric  method  of Thue and Siegel applies to a special family of Thue equations and inequalities ( see, for example, \cite{ Akh0}, \cite{Sie1} and \cite{Vou}).

\begin{defin} \label{defofdiag} A binary form $F(x , y) \in \mathbb{Z}[x , y]$ is called diagonalizable if  it can be written as
\be\label{form}
	 F(x , y) = (\alpha x + \beta y)^r - (\gamma x + \delta y)^r, 
 \ee 
 where the constants  $\alpha$, $\beta$, $\gamma$ and $\delta$  satisfy
\be\label{integralcondition}
 j=\alpha \delta -  \beta \gamma\neq 0.
\ee
\end{defin}

If  $F(x , y) = (\alpha x + \beta y)^r - (\gamma x + \delta y)^r \in \mathbb{Z}[x , y]$ is diagonalizable  then 
	\begin{equation}\label{chiquad}
(\alpha x + \beta y)  (\gamma x + \delta y) = \chi (Ax^2 + B xy+ Cy^2)
\end{equation}
for some $A, B, C \in \mathbb{Z}$ and a constant $\chi$. 
Let 
	\begin{equation}\label{defofD}
	D = D(F) = B^2 - 4 A C.
	\end{equation}
Then
	$$
	j^2 = \chi^2 D.
	$$
Therefore  $D(F) \neq 0$ for a diagonalizable form $F(x, y)$. We will denote the discriminant of $F(x, y)$ by  $\Delta(F)$.


The most interesting family of diagonalizable forms are binomial forms, the forms of the shape $ax^r-by^r$. Before we state our new general theorems on diagonalizable forms, we will present the applications of our theorems to obtaining bounds for the number of solutions to Thue's inequality with binomial forms. 
Many mathematicians, including Thue \cite{Thu2}, Siegel \cite{Sie2}, Domar \cite{Dom},
Evertse \cite{Eveabc}, Bennett and de Weger \cite{BeWe}, studied  the equation
$ax^r-by^r= c$.
In a breakthrough work \cite{Ben1},  Bennett  used a sophisticated combination of the  hypergeometric method with Chebyshev-like estimates for primes in arithmetic progressions to show 
 that
the  equation $$a x^r - by^r = 1,$$
 with $a$ and $b$ positive,   has at most one solution in positive
integers $x,y$. This is a sharp result, as the
equation 
$$(a+1)x^r-ay^r=1$$
has precisely one solution $(1,1)$ in positive integers, for every positive integer $a$. In \cite{Sie2}, Siegel showed that the equation $0< |a x^r - by^r| \leq c$ has at most one primitive solution in positive integers $x$ and $y$ if
$$
\left|a b\right|^{\frac{r}{2} - 1} \geq 4 \left(r \prod_{p| r} p^{\frac{1}{p-1}}\right)^{r} c^{2r -2}.
$$
Our main results can be directly applied to binomial Thue's inequalities to  improve the above result of Siegel, by extending the range of $c$ with respect to $a$ and $b$.  For example, Corollary \ref{cork=3} implies the following. 
\begin{thm}\label{Binoineq}
Let $a$, $b$ and $c$ and $r \geq 5$ be positive integers.  Assume that 
$$
ab \geq 2^r r^{\frac{7r}{r-4}} c^{2+\frac{r^2 + r+2}{r(r-4)}}.
$$
Then the inequality 
$$0< |a x^r - by^r| \leq c
$$
has at most $3$ primitive solutions in positive integers $x$ and $y$.  
\end{thm}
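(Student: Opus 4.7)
The plan is to derive Theorem~\ref{Binoineq} as a specialization of Corollary~\ref{cork=3}, the general bound --- promised in the introduction --- of at most three primitive solutions for diagonalizable forms satisfying a suitable size hypothesis. The first step is to recognize the binomial form $F(x,y)=ax^r-by^r$ as a diagonalizable form in the sense of Definition~\ref{defofdiag}: taking $\alpha=a^{1/r}$, $\beta=0$, $\gamma=0$, $\delta=b^{1/r}$ gives $F(x,y)=(\alpha x+\beta y)^r-(\gamma x+\delta y)^r$ with $j=\alpha\delta-\beta\gamma=(ab)^{1/r}\neq 0$. The auxiliary factorization \eqref{chiquad} then reduces to $(\alpha x)(\delta y)=(ab)^{1/r}\,xy$, so one may take $A=C=0$, $B=1$, and $\chi=(ab)^{1/r}$, which yields the particularly clean invariants $D(F)=1$ and $j^2=\chi^2 D$, as required.

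Having placed $F$ inside the framework of Corollary~\ref{cork=3}, the remaining step is to substitute these invariants --- together with the standard discriminant of a binomial form, $|\Delta(F)|=r^r(ab)^{r-1}$ --- into the size condition of the corollary. Setting $h=c$ and unfolding the resulting inequality, one expects, after clearing exponents and consolidating constants, to recover precisely
\[
ab\geq 2^r r^{\frac{7r}{r-4}} c^{2+\frac{r^2+r+2}{r(r-4)}}.
\]
The bound of three primitive positive solutions is then inherited directly from the general corollary.

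The conceptual engine --- the hypergeometric construction of Pad\'{e} approximants to $(1-z)^{1/r}$, the gap principle separating ``small'' and ``large'' solutions, and Evertse's refinement that controls the count of good approximations --- is packaged entirely inside Corollary~\ref{cork=3}. Consequently, the main obstacle in deducing Theorem~\ref{Binoineq} is not Diophantine but arithmetic: a disciplined accounting of how the exponent $(r^2+r+2)/(r(r-4))$ and the constant $2^r r^{7r/(r-4)}$ arise after inserting $D=1$, $j=(ab)^{1/r}$, and $|\Delta(F)|=r^r(ab)^{r-1}$ into the general bound. The principal hazard is inadvertent looseness in tracking the powers of $r$ and $c$; once one commits to the clean invariants above, the rest of the argument is bookkeeping.
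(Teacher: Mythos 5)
Your overall strategy is the same as the paper's: view $ax^r-by^r$ as a diagonalizable form, compute $|\Delta|=r^r(ab)^{r-1}$, set $h=c$, and check that the hypothesis $ab\geq 2^r r^{7r/(r-4)}c^{2+(r^2+r+2)/(r(r-4))}$ is exactly what the size condition of Corollary~\ref{cork=3} unfolds to after substituting $\Delta'=\frac{(ab)^{r-1}}{2^{r^2-r}c^{2r-2}}$ and taking $(r-1)$-th roots. That bookkeeping does work out, and your identification of the invariants ($D=1>0$, $j=(ab)^{1/r}$) is correct.

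However, there is a genuine gap in your final step: the bound of $3$ is \emph{not} inherited directly from Corollary~\ref{cork=3} when $r$ is even. For positive $a$ and $b$ the form $ax^r-by^r$ satisfies $F(1,0)=a>0$ and $F(0,1)=-b<0$, so it is indefinite, and the relevant case of the corollary for even $r$ is ``$D>0$, $r$ even, $F$ indefinite,'' which only gives $N_F(c)\leq 6$. To recover the stated bound of $3$ you need the additional observation the paper makes: for even $r$, the involution $(x,y)\mapsto(-x,y)$ maps solutions to solutions and exchanges the two $r$-th roots of unity that can occur in Lemma~\ref{positiveDforms}, so all solutions with $x,y>0$ are related to a single root of unity and hence number at most $3$ (this is where the count $2m$ in Theorem~\ref{thml} splits as $m$ per root of unity). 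For odd $r$ the corollary does give $3$ directly, so your argument is complete in that case; but as written your proof only establishes ``at most $6$'' when $r$ is even.
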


Furthermore, an important  observation in Section \ref{DTE}, will allow us to use a reduction method to obtain upper bounds for the number of integer solutions of equations of the shape $ax^r - by^r = c$. This reduction method has been used by Bombieri  and  Schmidt in \cite{Bos} and Stewart in \cite{Ste} for general Thue equations. A simple way to describe the application of this elaborated method is to assume that  $K$ is an upper bound for the number of solutions to equations $G(x , y) =1$, where $G$ ranges over all irreducible integral binary forms of degree $r \geq 3$. Then the number of primitive solutions of $F(x , y) = h$, for every irreducible $F(x , y) \in \mathbb{Z}[x , y]$ of degree $r$, is at most $K r^{\omega(h)}$, where $\omega(h)$ denotes the number of prime factors of $h$. Now suppose that we are interested in the number of primitive solutions of $ax^r - by^r = c$. Having the beautiful result of Bennett, mentioned above, on the number of solutions of $ax^r - by^r =1$, one would hope to get a good bound in the case $ax^r - by^r = c$. However, this is not straightforward, because the reduction method that was used for general Thue equations, will receive $ax^r - by^r = c$ and reduce it to a number of Thue equations $\tilde{F}(x, y) = 1$, where the forms $\tilde{F}$ are of degree $r$ but are not necessarily  binomials! It turns out that once we start with a diagonalizable form, the reduction method will provide new diagonalizable forms. Therefore thinking of $ax^r - by^r = c$ as a diagonalizable form has a great advantage. We will show that
\begin{thm}\label{binomial=}
Let $a$, $b$ and $c$ and $r \geq 5$ be positive integers,  with $\gcd(c, r a b) =1$.  Assume that 
$$
ab \geq 2^r r^{\frac{182.6 r}{r-1}}.
$$
Then the equation
$$ a x^r - by^r = c
$$
has at most $2\,  r^{\omega(c)}$ primitive solutions in positive integers $x$ and $y$.  Further, if 
$$
 ab \geq 2^{r} r^{7r/(r-4)}
 $$
 then the equation
$$ a x^r - by^r = c
$$
has at most $3\,  r^{\omega(c)}$ primitive solutions in positive integers $x$ and $y$.
\end{thm}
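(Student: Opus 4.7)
The natural strategy is to reduce $ax^r-by^r=c$ to a family of binomial diagonalizable equations with right-hand side $1$, and then appeal to the results already established for such equations. The guiding hope is that, even though the Bombieri--Schmidt--Stewart reduction normally sends a Thue equation with large right-hand side to a collection of general Thue equations with right-hand side $1$, the diagonalizable structure of $F(x,y)=ax^r-by^r$ will be preserved by the reduction.

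First I would adapt the procedure of Bombieri--Schmidt \cite{Bos} and Stewart \cite{Ste}. A primitive positive solution $(x,y)$ of $ax^r-by^r=c$ corresponds, via the factorization over $\mathbb{Q}(\sqrt[r]{b/a})$, to an element of prescribed norm $c$ in the relevant order. The hypothesis $\gcd(c, rab)=1$ ensures that every prime dividing $c$ is unramified and behaves well with respect to the order, so the set of solutions breaks into at most $r^{\omega(c)}$ classes, one per admissible product of prime ideals of the right norm. Within each class one fixes a generator and rewrites the equation after a unimodular change of variables, producing a new binary form $\tilde F(X,Y)$ such that each member of the class gives an integer solution of $\tilde F(X,Y)=1$.

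The crucial point --- which is precisely the content of Section~\ref{DTE} --- is that for each class the form $\tilde F(X,Y)$ is again a \emph{binomial diagonalizable} form, of the shape $\tilde a X^r-\tilde b Y^r$, with $\tilde a\tilde b$ comparable to (in fact equal up to harmless factors that are absorbed into the hypothesis) $ab$. Thus the lower bound on $ab$ assumed in either of the two hypotheses is inherited by each of the reduced equations, and the whole machinery developed earlier in the paper for the case $c=1$ is available. It then remains to apply the appropriate corollary to each reduced equation: under the stronger hypothesis $ab\ge 2^r r^{182.6 r/(r-1)}$ the per-equation bound is $2$ primitive positive solutions, yielding $2\,r^{\omega(c)}$ in total; under the weaker hypothesis $ab\ge 2^r r^{7r/(r-4)}$, Corollary~\ref{cork=3} (the version that underlies Theorem~\ref{Binoineq}) gives $3$ primitive positive solutions per equation, yielding $3\,r^{\omega(c)}$.

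The main obstacle is the diagonalizability claim in Section~\ref{DTE}: one must verify that after passing to a generator of each ideal class and performing the unimodular change of variables, the resulting form is still a difference of two pure $r$-th powers rather than a general irreducible form of degree $r$, and that the new coefficients $\tilde a,\tilde b$ are positive integers of size comparable to $a,b$ so the hypergeometric hypothesis is preserved. Once this is established, primitivity of solutions transfers cleanly through the reduction and the counting of classes is sharp precisely because $\gcd(c,rab)=1$ prevents collisions and rules out the pathological primes that would otherwise have to be treated separately.
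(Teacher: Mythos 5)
Your overall architecture (reduce modulo the prime factors of $c$ to at most $r^{\omega(c)}$ equations with right-hand side $1$, then apply a per-equation bound) matches the paper's. But there is a genuine error at the step you yourself call crucial: you assert that each reduced form $\tilde F$ is again a \emph{binomial} form $\tilde a X^r - \tilde b Y^r$ with $\tilde a\tilde b$ comparable to $ab$. This is false, and the paper goes out of its way (in the Introduction and in Section \ref{DTE}) to say so. The Bombieri--Schmidt--Stewart reduction replaces $F$ by forms equivalent to $c^{-1}F\bigl((x,y)A\bigr)$ for integer matrices $A$ of determinant $c$; applied to $ax^r-by^r$ this yields, e.g., $c^{-1}\bigl[a(px+qy)^r-b(sy)^r\bigr]$, which has all the cross terms $x^iy^{r-i}$ and is not a binomial. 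What \emph{is} preserved is diagonalizability (a $\mathrm{GL}$-type action sends $(\alpha x+\beta y)^r-(\gamma x+\delta y)^r$ to a form of the same shape), together with the sign of $D$. This is exactly why the paper cannot invoke any binomial-specific result for the reduced equations and instead applies Siegel's Theorem \ref{siegels} (for the $182.6$ hypothesis) and Corollary \ref{cork=3} (for the $7r/(r-4)$ hypothesis), both of which hold for arbitrary diagonalizable forms. Your plan, as written, leaves the per-class bound of $2$ unjustified: if you intended to use a binomial result on the reduced equations, the argument collapses; if you intended the diagonalizable results, you must track the discriminant, not $\tilde a\tilde b$.

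Two further missing pieces. First, the quantitative transfer of the hypothesis: the correct statement is Lemma \ref{howDeltachanges}, namely $|\Delta(\tilde F)|\ge c^{(r-1)(r-2)}|\Delta(F)|$ when $\gcd(c,\Delta(F))=1$ (this is where $\gcd(c,rab)=1$ enters, since $\Delta(F)=\pm r^r(ab)^{r-1}$); combined with $\Delta'=|\Delta|/(2^{r^2-r}r^r)$ at $h=1$, the hypothesis $ab\ge 2^r r^{182.6r/(r-1)}$ is exactly what makes Siegel's condition $\Delta'>r^{4c_1r}$ hold for every $\tilde F$. Your ``harmless factors absorbed into the hypothesis'' does not substitute for this computation. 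Second, for even $r$ the cited theorems give $4$ (resp.\ $6$) primitive solutions per reduced equation, not $2$ (resp.\ $3$); the paper recovers the stated bounds by observing that $(x,y)\mapsto(-x,y)$ is an involution on the solution set of the \emph{original} equation $|ax^r-by^r|=c$ pairing each solution with one that is not positive, and halving. This symmetry does not survive the unimodular change of variables, so the halving must be performed before reduction; your proposal omits it entirely.
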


In \cite{Eveabc}  Evertse showed that, for positive integers $a$, $b$ and $c$ the equation
$ a x^r - by^r = c$ has at most $2 r^{\omega(c)} + 6$ solutions. 
In \cite{Mig}, using the theory of linear forms in logarithms,  Mignotte obtained effective results for the size of the solutions of binomial Thue's inequality.  Some computational results on binomial Thue equations can be found in \cite{BBGP}.


\begin{defin}\label{defofprimitive}
Let $(x , y) \in \mathbb{Z}^2$  satisfy the equation $F(x , y) = h$ (or the inequality $0<|F(x,y)| \leq h$). We call $(x , y)$ a primitive solution if  $\gcd (x , y) =1$.
\end{defin}
Throughout this manuscript, we deem 
$(x,y)$ and $(-x,-y)$ as one solution.
\begin{defin}
For an integer $h$
and a binary form $F(x ,y) \in \mathbb{Z}[x,y]$,
let $N_F(h)$ denote the number of primitive solutions to the
inequality $0< |F(x , y)| \leq h$. For $Y_L\in\mathbb{R}^+$, let $N_F(h;Y_L)$ denote the number of primitive solutions  of $0< |F(x , y)| \leq h$ with $y\geq Y_L$.
 Put
	\begin{equation}\label{Deltaprime}
	\Delta'=\frac{|\Delta|}{2^{r^2-r} r^r h^{2r-2}},	
	\end{equation}
	where $\Delta$ is the discriminant of $F(x , y)$.
	\end{defin}

We will follow some ideas of  Siegel in \cite{Sie3} to prove several results on diagonalizable Thue's inequality. In Section \ref{compareSiegel} we will state the main theorem in \cite{Sie3} and compare it with one of our main theorems.

	 \begin{thm}\label{2r1}
	 Suppose $F(x , y)$ is a diagonalizable form with degree $r \geq 6$ and discriminant $\Delta$.
	 Assume that
	 	\begin{equation}\label{delta2r1}
	 	\Delta' \geq r^{13r^2(r-1)/(r^2-5r-2)} 
h^{4(r-1)(r^2-r+2)/(r^2-5r-2)},
\end{equation}
where $\Delta'$ is defined in \eqref{Deltaprime}.
	Then 
	\[
	N_F(h)\leq \begin{cases} 2r+1 &\mbox{if } D<0 \\ 
5 & \mbox{if } D>0, \ r \textrm{ is even and $F$ is indefinite}\\
3 & \mbox{if } D>0, \ r \textrm{ is odd and $F$ is indefinite}\\
1 & \mbox{if } D>0 \textrm{ and $F$ is definite}.
\end{cases} 
	\]
	 \end{thm}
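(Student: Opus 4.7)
The plan is to split the primitive solutions of $0 < |F(x,y)| \leq h$ into small and large by the size of $y$, classify the large solutions by the complex direction of the pair $(u,v) := (\alpha x+\beta y,\,\gamma x+\delta y)$, and then bound each class separately using the Thue--Siegel--Evertse hypergeometric method applied to the diagonal factorization $F(x,y) = \prod_{k=0}^{r-1}(u-\zeta^k v)$, where $\zeta = e^{2\pi i/r}$.

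First I would identify the admissible directions $\zeta^k$ along which $u/v$ can approach for large solutions. When $D < 0$ the linear forms $u$ and $v$ are complex conjugates up to a unimodular factor, so $u/v$ lies on the unit circle, and the condition $|F|\leq h$ forces $u/v$ near one of the directions where $\mathrm{Re}(u^r)$ vanishes. When $D>0$ the forms $u,v$ are real and only the real roots $\pm 1$ of $X^r=1$ are admissible, giving two directions if $r$ is even, one if $r$ is odd, and none when $F$ is definite. Each large primitive solution is attached to its closest admissible direction.

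Next I would establish the per-direction count. The gap principle asserts that if two distinct primitive solutions $(x_1,y_1)$ and $(x_2,y_2)$ are attached to the same direction and $Y_L \leq y_1 < y_2$, then $y_2 \gg y_1^{r-2}$; iterating this limits the number of chained solutions per direction to a small absolute constant. The hypergeometric step then uses Pad\'e approximants to $(1-z)^{1/r}$, constructed from the Gauss hypergeometric series and sharpened by the Evertse refinement, to produce an absolute upper bound on the largest $y$ in each chain in terms of the coefficients of $F$ and $h$. The hypothesis \eqref{delta2r1} is calibrated so that this hypergeometric upper bound lies below the second step of the gap chain, restricting large primitive solutions to at most one per direction when $D<0$ and at most two per direction in the indefinite $D>0$ case, and producing large-solution totals $2r$, $4$, $2$, $0$ in the four scenarios.

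Finally, I would bound the number of small primitive solutions (those with $y < Y_L$) by an elementary count that uses the diagonal structure of $F$ and the lower bound on $\Delta'$; this gives at most one such solution. Summing the small and large contributions yields $2r+1$, $5$, $3$, $1$ in the four cases. The main obstacle is the hypergeometric step: the degrees and parameters of the Pad\'e approximants must be tuned, together with the Evertse refinement, so that the resulting effective irrationality measure matches the precise exponents appearing in \eqref{delta2r1}. The denominator $r^2-5r-2$ arises from this optimization, and its positivity forces the restriction $r\geq 6$.
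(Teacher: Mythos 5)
Your outline has the right ingredients (classification of solutions by the nearest $r$-th root of unity of $u/v$, a gap principle, and a Pad\'e/hypergeometric contradiction calibrated against \eqref{delta2r1}), but the bookkeeping does not produce the stated bounds and the treatment of the exceptional solution is a genuine gap. First, your counts do not add up: ``at most one large solution per direction'' with $r$ admissible directions when $D<0$ yields $r$ large solutions, not $2r$, so your claimed totals $2r,4,2,0$ are inconsistent with your own per-direction bound. What the paper actually proves is that, after deleting a \emph{single} global exceptional solution, each root of unity accounts for at most \emph{two} solutions; assuming three solutions $(x_1,y_1),(x_2,y_2),(x_3,y_3)$ related to the same $\omega$ with $\zeta_1\ge\zeta_2\ge\zeta_3$, the gap principle $Z_{i}\ge \frac{|j|}{2h}Z_{i-1}^{r-1}$ is played against an inductively strengthened hypergeometric lower bound $Z_3\ge Z_2^{(n+1)r-1}/(2^{n+4}r^{(3nr+2)/(r-2)}|j|^{(nr+2)/(r-2)}h^{2n+1})$ valid for every $n$, which forces $Z_3=\infty$. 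A ``one per direction'' conclusion is both unobtainable from this machinery at the stated discriminant threshold and insufficient for the theorem.

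Second, the ``$+1$'' in $2r+1$ does not come from a small/large split in $y$, and your assertion that the solutions with $y<Y_L$ number at most one ``by an elementary count'' is unsupported — bounding small solutions of a Thue inequality is precisely the hard part in general, and no argument is given. The paper instead removes the single solution $(x_0,y_0)$ maximizing $\zeta=|F|/Z^r$; under \eqref{delta2r1} one has $|j|>2h^{2/r}$, and Lemma 5.1(iii) then forces $\zeta<1$ for \emph{every} other solution, so every other solution is genuinely related to a root of unity and subject to the gap principle. This $\zeta$-based removal is what makes the argument close, and it is independent of the size of $y$ (the relation between $y$ and $Z$ via the reduced quadratic form is only needed in Theorem 1.8, not here). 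As a smaller point, for $D<0$ the admissible directions are the points where $(u/v)^r=1$, i.e.\ the $r$-th roots of unity, not where $\mathrm{Re}(u^r)$ vanishes. Your observation that $r^2-5r-2>0$ forces $r\ge 6$ is correct.
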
 

	\begin{thm}\label{thml}
 Suppose $F$ is a diagonalizable form with degree $r\geq 5$ and discriminant $\Delta$.
Assume that 
	\begin{equation}\label{Delta_condn}
	\Delta' \geq  r^{\alpha_1}h^{\alpha_2}
	\end{equation}
with
	\begin{equation}\label{twoalphas}
	\alpha_1=\frac{7r^2(r-1)}{(r-1)^{m-1}-2r-1}\ \textrm{ and }\
	\alpha_2=\frac{(r-1)(r^2+r+2)}{(r-1)^{m-1}-2r-1},
	\end{equation}
	where $m \geq 3$ is an integer and $\Delta'$ is defined in \eqref{Deltaprime}.
Then
	\be\label{kr}
	N_F(h)\leq \begin{cases} rm &\mbox{if } D<0 \\ 
2m & \mbox{if } D>0, \ r \textrm{ is even and $F$ is indefinite}\\
m & \mbox{if } D>0, \ r \textrm{ is odd and $F$ is indefinite}\\
1 & \mbox{if } D>0 \textrm{ and $F$ is definite}.
\end{cases} 
	\ee
	\end{thm}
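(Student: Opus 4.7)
The plan is to mimic the hypergeometric-method strategy of Theorem \ref{2r1}, replacing its single gap estimate by an $m$-fold iterated gap principle. This trades the stronger hypothesis there for the weaker one \eqref{Delta_condn} at the cost of allowing a linear-in-$m$ count of solutions per sector. Start from the factorization $F(x,y) = \prod_{k=0}^{r-1}\bigl((\alpha x + \beta y) - \zeta^k(\gamma x + \delta y)\bigr)$ with $\zeta = e^{2\pi i/r}$. For any primitive solution of $0 < |F(x,y)| \leq h$, one factor must be minimal, which assigns the solution a unique ``sector'' $k \in \{0, \ldots, r-1\}$. A standard sign analysis of the real quadratic $Ax^2 + Bxy + Cy^2$, parallel to the one carried out in Theorem \ref{2r1}, shows that at most $r$ sectors are populated when $D<0$, at most $2$, $1$, or $0$ when $D>0$ in the indefinite-even, indefinite-odd, and definite subcases respectively. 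The four cases in \eqref{kr} then reduce to bounding the number of solutions inside a single sector by $m$ (with the definite case handled by a separate standard argument giving $1$).

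For the in-sector bound, suppose $(x_1, y_1), \ldots, (x_s, y_s)$ are primitive solutions in a common sector $k$, ordered by $y_1 < \cdots < y_s$. The approximation inequality is
\[
\Bigl| \frac{\gamma x + \delta y}{\alpha x + \beta y} + \zeta^k \Bigr| \ll_r \frac{h}{|\Delta|^{1/(2r)}\, y^{\,r}},
\]
and combining it with the trivial $|x_i y_{i+1} - x_{i+1} y_i| \geq 1$ yields a gap relation of the shape $y_{i+1} \gg |\Delta|^{1/(2r)} y_i^{\,r-1}/h$. Iterating $s-1$ times gives $y_s \gg y_1^{(r-1)^{s-1}}$ up to explicit $r$-, $h$-, and $\Delta$-powers. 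On the other hand, the Thue--Siegel lemma from the hypergeometric construction with Pad\'e approximants to $(1-z)^{1/r}$, in the sharpened form due to Evertse, furnishes a complementary upper bound on $y_s$ by a fixed power of $h$ over $|\Delta|^{1/2}$. Setting $s = m+1$ and comparing the two bounds forces a contradiction exactly when $\Delta' \geq r^{\alpha_1} h^{\alpha_2}$ with the exponents in \eqref{twoalphas}.

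The hard part will be the bookkeeping that produces the clean denominator $(r-1)^{m-1} - 2r - 1$ appearing in $\alpha_1$ and $\alpha_2$: the $(r-1)^{m-1}$ records the $m-1$ iterations of the gap principle, the $-(2r+1)$ absorbs the combined loss between the hypergeometric lower bound on $y_1$ and the $h$-dependent correction accumulated through the iteration, while the numerators $7r^2(r-1)$ and $(r-1)(r^2+r+2)$ must reproduce the Evertse-style constants built into the Pad\'e pair. Keeping every inequality uniform in $m$ and valid for all $r \geq 5$, and matching the exponents exactly rather than up to absolute constants, is the essential technical work; structurally the argument then follows the template of Theorem \ref{2r1}.
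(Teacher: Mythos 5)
Your architecture matches the paper's: partition solutions by the nearest $r$-th root of unity, control the number of populated classes via Lemmas \ref{definiteforms} and \ref{positiveDforms}, and bound each class by playing an iterated gap principle against Pad\'e approximants to $(1-z)^{1/r}$. But the decisive step is misstated. The hypergeometric construction does not furnish an absolute upper bound on $y_s$ ``by a fixed power of $h$ over $|\Delta|^{1/2}$'' --- if it did, the $m$-dependence of the theorem would be superfluous. What it actually delivers (Lemmas \ref{lb}--\ref{nv} and the induction in Lemma \ref{induction}) is, for every Pad\'e index $n$, a two-term inequality $c_1(n,g)\,h\,Z_{k-1}^{nr+1-g}Z_k^{-r+1}+c_2(n,g)\,h^{2n+1-g}Z_{k-1}^{-r(n+1-g)+1-g}Z_k\geq 1$ relating the two largest solutions of the class; after the first term is forced below $1/2$ (itself an induction on $n$), this yields the lower bound \eqref{ind_star}, $Z_k\gg Z_{k-1}^{(n+1)r-1}/C^{\,n}$, valid for \emph{all} $n$, and the contradiction comes from letting $n\to\infty$ once the discriminant hypothesis guarantees $Z_{k-1}^{r}>C$. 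The denominator $(r-1)^{m-1}-2r-1$ in $\alpha_1,\alpha_2$ arises from substituting $Z_1\geq |j|/(2h^{1/r})$ and the iterated gap bound $Z_{k-1}\geq (|j|/2h)^{(R(k-1)-1)/(r-2)}Z_1^{R(k-1)}$ into the smallness condition for the first term, not from matching numerators against a ceiling for $y_s$. You would also need the non-vanishing control on the auxiliary quantities $\Sigma_{n,g}$ (at most one of $\Sigma_{n,0},\Sigma_{n+I,1}$ vanishes, Lemma \ref{nv}), which your sketch omits entirely.

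Two further points would break the write-up as it stands. First, ordering by $y$ and phrasing the gap as $y_{i+1}\gg y_i^{\,r-1}$ presupposes $\max(|u|,|v|)\gg |y|$, which holds only for reduced forms with $D<0$ (Lemma \ref{negativedisc}); for $D>0$ one has $|u|\neq|v|$ and no such bound, so the argument must be run in terms of $Z=\max(|u|,|v|)$ and $\zeta=|F|/Z^{r}$ throughout --- this is exactly why Lemma \ref{gap_Siegel} needs a separate real-analytic treatment when $D>0$. Second, the solution with the largest $\zeta$ in each class must be excised before the gap principle is applied, since it may have $\zeta\geq 1$, in which case \eqref{Gap22} and the relatedness analysis fail; the per-class bound $m$ is obtained as $1+(m-1)$, not by running $m$ gap iterations on $m+1$ ordered solutions.
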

	
	One interesting feature of Theorem \ref{thml} is that it shows explicitly how by increasing the range  for the positive integer $h$ (by increasing the value of $m$ and therefore decreasing the value of $\alpha_{2}$), we are required to count more solutions. The upper bound on the  number of solutions increases linearly in terms of $m$ while the range of $h$ increases double exponentially in $m-4$, provided that $h \ll |\Delta|^{\frac{1}{2(r-1)}}$ and $m\geq 4$.

By taking $m =3$, we obtain the following immediate corollary of Theorem \ref{thml}.
	\begin{cor}\label{cork=3}
	Suppose $F$ is a diagonalizable form with degree $r \geq 5$.
	  If
	\[
		\Delta' \geq  r^{7r(r-1)/(r-4)} h^{(r-1)(r^2+r+2)/(r(r-4))} ,
	\]
	then
	\[
	N_F(h)\leq \begin{cases} 3r &\mbox{if } D<0 \\ 
6 & \mbox{if } D>0, \ r \textrm{ is even and $F$ is indefinite}\\
3 & \mbox{if } D>0, \ r \textrm{ is odd and $F$ is indefinite}\\
1 & \mbox{if } D>0 \textrm{ and $F$ is definite}.
\end{cases} 
	\]
	\end{cor}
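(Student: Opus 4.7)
The plan is to deduce Corollary \ref{cork=3} directly from Theorem \ref{thml} by specialising to $m=3$. Since the statement is billed as ``an immediate corollary'', no new ideas are required; the entire argument reduces to a routine algebraic simplification of the exponents $\alpha_1$ and $\alpha_2$ in \eqref{twoalphas} together with substitution into \eqref{kr}.

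First I would substitute $m=3$ into the denominator $(r-1)^{m-1}-2r-1$ common to both exponents:
\[
(r-1)^{2}-2r-1 \;=\; r^2-2r+1-2r-1 \;=\; r^2-4r \;=\; r(r-4).
\]
Note that this quantity is strictly positive for $r \geq 5$, so the hypothesis $r\geq 5$ of the corollary aligns with that of Theorem \ref{thml} and the exponents $\alpha_1,\alpha_2$ are well-defined and positive. Plugging into \eqref{twoalphas} gives
\[
\alpha_1 \;=\; \frac{7r^2(r-1)}{r(r-4)} \;=\; \frac{7r(r-1)}{r-4}, \qquad \alpha_2 \;=\; \frac{(r-1)(r^2+r+2)}{r(r-4)},
\]
which match the exponents of $r$ and $h$ in the hypothesis of the corollary. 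Thus the assumption on $\Delta'$ in Corollary \ref{cork=3} is exactly the assumption \eqref{Delta_condn} of Theorem \ref{thml} with $m=3$.

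Next I would simply read off the case distinction from \eqref{kr} with $m=3$: the bound $rm$ becomes $3r$ when $D<0$; the bound $2m$ becomes $6$ in the indefinite even-$r$ case; the bound $m$ becomes $3$ in the indefinite odd-$r$ case; and the definite case is unchanged at $1$. This produces exactly the four cases in the statement of Corollary \ref{cork=3}. The only ``obstacle'' is the elementary identity $(r-1)^2-2r-1=r(r-4)$, which incidentally also explains why the statement of the corollary must assume $r\geq 5$ rather than $r\geq 4$: at $r=4$ the denominator vanishes and both exponents blow up.
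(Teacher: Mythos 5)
Your proposal is correct and is exactly the paper's intended argument: the paper states Corollary \ref{cork=3} as an immediate consequence of Theorem \ref{thml} with $m=3$, and your computation $(r-1)^2-2r-1=r(r-4)$ together with the resulting simplification of $\alpha_1$ and $\alpha_2$ is precisely the verification required. Nothing further is needed.
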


Our assumption $r \geq 5$ is to simplify the proofs and reduce the amount of computations. One can use similar techniques for the diagonalizable forms of  degree $r= 3, 4$. However, these special and important  cases have been studied before. We refer the reader to \cite{AkhC1, SS2, Wak1, Wak2} for cubic inequalities and to \cite{Akhint}, in particular its Theorem 1.4, for quartic inequalities. Also Wakabayashi has studied a family of parametric quartic Thue's inequalities in \cite{Wakq1, Wakq2}.

There have been a few  results on the number of solutions to general Thue's inequalities $0<|F(x,y)| \leq h$   which assert that  if  $h$ is small in comparison with a function of  $|\Delta(F)|$ then   upper bounds  $N_F$, independent of $h$,   could be established (see for example \cite{Akhsm, SS1, Gy1}). In particular, the results in \cite{Akhsm} and \cite{Gy1} imply that such upper bounds can be obtained   if $h < |\Delta|^{\frac{1}{4(r-1)}}$. 
Our method allows us to improve these upper bounds for diagonalizable Thue's inequalities. Moreover, we are able to improve the dependency of $h$ on the discriminant of the binary form in the inequality.

The following is another corollary of Theorem \ref{thml}.
\begin{cor} \label{newcor}
Let $F(x , y)$ be a diagonalizable form of degree  $r \geq 5$, and  $\epsilon$ any positive number  with
$$
\frac{(r^2+r+2)}{4(r-1)[(r-1)^{m-1}-2r-1]}<\epsilon<\frac{1}{2(r-1)}.
$$
If
\begin{equation}\label{hsatisabove}
0<h\leq \frac{|\Delta|^{\frac{1}{2(r-1)}-\epsilon}}{2^{r/2}r^{7}},
\end{equation}
then $N_F(h)$ satisfies \eqref{kr}. In particular, if $h$ satisfies
the  inequality \eqref{hsatisabove}, $D<0$ 
and $0<\epsilon<\frac{1}{2(r-1)},$ 
then
$$N_F(h) \leq \left(4+\ceil[\Bigg]{\frac{\log{\left(1/\epsilon\right)} - \log 4}{\log(r-1)} }\right)r $$
where the symbol $\ceil{x}$ denotes the smallest integer 
greater than or equal to the real number $x$.
\end{cor}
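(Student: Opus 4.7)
The plan is to deduce both parts of Corollary \ref{newcor} as direct consequences of Theorem \ref{thml}, by converting the hypothesis \eqref{hsatisabove} on $h$ into the lower bound \eqref{Delta_condn} on $\Delta'$. Unpacking the definition $\Delta'=|\Delta|/(2^{r^2-r}r^r h^{2r-2})$, condition \eqref{Delta_condn} is equivalent to
\[
|\Delta|\ \geq\ 2^{r^2-r}\, r^{r+\alpha_1}\, h^{\,2r-2+\alpha_2}.
\]
Setting $\eta=2r-2+\alpha_2$ and raising \eqref{hsatisabove} to the power $\eta$, the remaining task reduces to establishing
\[
|\Delta|^{\,\tfrac{\alpha_2}{2(r-1)}-\eta\epsilon}\ \leq\ \frac{(2^{r/2}r^7)^\eta}{2^{r^2-r}\,r^{r+\alpha_1}}.
\]

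Writing $N=(r-1)^{m-1}-2r-1$, so that $\alpha_2=(r-1)(r^2+r+2)/N$ and $\eta=(r-1)(2N+r^2+r+2)/N$, a short calculation yields $\tfrac{\alpha_2}{2(r-1)\eta}=\tfrac{r^2+r+2}{2(r-1)(2N+r^2+r+2)}$. The lower bound $\tfrac{r^2+r+2}{4(r-1)N}$ on $\epsilon$ imposed in the corollary strictly exceeds this threshold (same numerator, smaller denominator), so the exponent of $|\Delta|$ on the left is strictly negative and the left side is bounded by $1$ for $|\Delta|\geq 1$. It then remains to verify the constant inequality $(2^{r/2}r^7)^\eta\geq 2^{r^2-r}r^{r+\alpha_1}$: since $\eta\geq 2(r-1)$ one has $r\eta/2\geq r^2-r$, while a direct computation gives $7\eta-r-\alpha_1=13r-14+7(r-1)(r+2)/N>0$. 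Both exponent comparisons hold, so Theorem \ref{thml} applies and yields \eqref{kr}.

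For the final assertion, given $\epsilon\in(0,1/(2(r-1)))$ with $D<0$, I would choose the smallest integer $m\geq 3$ satisfying $\epsilon>\tfrac{r^2+r+2}{4(r-1)[(r-1)^{m-1}-2r-1]}$, equivalently $(r-1)^{m-1}>2r+1+\tfrac{r^2+r+2}{4(r-1)\epsilon}$. Taking $m=4+\ceil{(\log(1/\epsilon)-\log 4)/\log(r-1)}$ gives $(r-1)^{m-1}\geq(r-1)^3/(4\epsilon)$; checking that this exceeds the required quantity reduces, after using $\epsilon<1/(2(r-1))$, to the inequality $(r-1)^4\geq r^2+5r+4$, which holds comfortably for $r\geq 5$. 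Theorem \ref{thml} in the case $D<0$ then delivers $N_F(h)\leq rm$, as claimed. The main obstacle throughout is not conceptual but bookkeeping: one must keep careful track of the two separate thresholds on $\epsilon$ and confirm that the explicit constants $2^{r/2}r^7$ in \eqref{hsatisabove} genuinely absorb the much larger constants $2^{r^2-r}r^{r+\alpha_1}$ appearing in the hypothesis of Theorem \ref{thml}.
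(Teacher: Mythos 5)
Your proposal is correct and follows essentially the same route as the paper: both reduce \eqref{hsatisabove} to the hypothesis \eqref{Delta_condn} of Theorem \ref{thml} by checking that the exponent of $|\Delta|$ works out (via the same inequality $\frac{\alpha_2}{2(r-1)[2(r-1)+\alpha_2]}<\frac{\alpha_2}{4(r-1)^2}<\epsilon$, which you phrase in terms of $\eta=2(r-1)+\alpha_2$) and that $2^{r/2}r^7$ dominates the constants (the paper's $\frac{r+\alpha_1}{2(r-1)+\alpha_2}<7$ is your $7\eta>r+\alpha_1$), and both then pick $m=4+\ceil{(\log(1/\epsilon)-\log 4)/\log(r-1)}$ for the second assertion using $(r-1)^3>r^2+r+2$.
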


A nice feature of diagonalizable forms is that they can adopt a reduction theory based on the classical reduction for quadratic forms. We will define reduced diagonalizable forms in Section \ref{EF}. We will prove the following theorems on the number of large solutions to Thue inequalities $0<|F(x,y)| \leq h$, without any assumption on the size of $h$.

\begin{thm}\label{thmy}
Suppose $F(x , y)$ is a reduced diagonalizable form  of degree $r \geq 6$ and with $D<0$. Let $m \geq 2$
be an integer. Let
	\begin{equation}\label{ycondn}
	Y_L= \frac{r^{i_1} h^{i_2}}{|j|^{i_3}}
        \end{equation}
where
$$i_1=2+\frac{2}{r},\
i_2= \frac{1}{r-2}+\frac{r-3}{(r-2)(r-1)^{m-1}}$$
and
\[
i_3=\begin{cases}
0 &\mbox{if } |j| \geq 1\\
\frac{r}{2(r-2)} & otherwise.
\end{cases}
\]
Then $$N_F(h;Y_L)\leq m r.$$
\end{thm}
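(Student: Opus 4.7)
The plan is to factor $F$ completely over $\mathbb{C}$ as
$$F(x,y) = \prod_{k=0}^{r-1} \ell_k(x,y), \qquad \ell_k(x,y) = (\alpha x + \beta y) - \zeta^k(\gamma x + \delta y),\quad \zeta = e^{2\pi i/r},$$
and to partition each primitive solution $(x,y)$ with $y \geq Y_L$ into one of $r$ classes, labeled by the index $k$ that minimizes $|\ell_k(x,y)|$. For $(x,y)$ in class $k$, the bound $|F(x,y)| \leq h$ combined with elementary estimates on the remaining $r-1$ factors---each of order $|y|$ up to an explicit constant depending on $r$ and $|j|$, because $F$ is reduced---upgrades to the sharpened inequality $|\ell_k(x,y)| \leq c_0\,h/|y|^{r-1}$.

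Next, inside a fixed class I would derive the gap principle from the bilinear identity
$$j\,(x_1 y_2 - x_2 y_1) = \ell_k(x_1, y_1)(\gamma x_2 + \delta y_2) - \ell_k(x_2, y_2)(\gamma x_1 + \delta y_1),$$
which follows directly from $j = \alpha\delta - \beta\gamma$. Since $(x_1,y_1)$ and $(x_2,y_2)$ are distinct primitive solutions, $|x_1 y_2 - x_2 y_1|$ is a positive integer, so the identity together with the estimates $|\ell_k(x_i, y_i)| \leq c_0 h/y_i^{r-1}$ and $|\gamma x_i + \delta y_i| \leq c_1 y_i$ gives, for $y_1 \leq y_2$,
$$y_2 \geq c_2\,|j|\,y_1^{r-1}/h.$$
The piecewise definition of $i_3$ at the threshold $|j|=1$ reflects a separate case analysis of $|\gamma x + \delta y|$ under reducedness in the compressed regime $|j|<1$, where $c_1$ must be replaced by something that absorbs a factor of $|j|^{-1/2}$.

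The combinatorial core is to iterate this gap principle within a single class. Enumerating the solutions as $Y_L \leq y_1 < y_2 < \cdots < y_N$ and iterating yields
$$y_N \geq y_1^{(r-1)^{N-1}}\cdot (c_2|j|/h)^{[(r-1)^{N-1}-1]/(r-2)}.$$
The exponents defining $Y_L$ in \eqref{ycondn} are calibrated so that $N \geq m+1$ produces a contradiction: the iterated lower bound on $y_{m+1}$ conflicts with the trivial pigeonhole bound $|\ell_k(x_{m+1}, y_{m+1})| \leq |F(x_{m+1},y_{m+1})|^{1/r} \leq h^{1/r}$ (equivalently, it forces $\ell_k$ to equalize with a neighboring factor and violate the class assignment of $(x_{m+1}, y_{m+1})$). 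Hence each class contributes at most $m$ solutions with $y \geq Y_L$, and summing over the $r$ classes yields $N_F(h; Y_L) \leq mr$.

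The main obstacle will be tracking constants so that the prescribed exponents $i_1 = 2 + 2/r$, $i_2 = 1/(r-2) + (r-3)/[(r-2)(r-1)^{m-1}]$, and the piecewise $i_3$ emerge cleanly. The factor $(r-1)^{m-1}$ in the denominator of $i_2$ is the direct trace of $m-1$ applications of the superlinear gap; the limiting piece $1/(r-2)$ comes from the geometric series summing the additive terms $\log(h/(c_2|j|))$ accumulated during iteration, while the transient piece $(r-3)/[(r-2)(r-1)^{m-1}]$ is the tail of that series. Obtaining the exact prefactor $r^{2+2/r}$ demands a sharp bound on $|\gamma x + \delta y|/|y|$ via the reducedness hypothesis, and the separate formula for $i_3$ when $|j|<1$ requires extra bookkeeping, since in that regime the linear forms $\ell_k$ are compressed and the passage from $|F|\leq h$ to $|\ell_k|\leq c_0 h/|y|^{r-1}$ loses a factor of $|j|^{r/[2(r-2)]}$.
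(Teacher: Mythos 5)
Your setup coincides with the paper's: the $r$ classes indexed by the factor $u - v e^{2\pi i k/r}$ of smallest modulus are exactly the sets $S_\omega$ of solutions related to an $r$-th root of unity; the reducedness hypothesis is used in the same way (the paper's Lemma \ref{negativedisc} gives $|u|=|v|\geq \tfrac12|\chi|^{1/2}|3D|^{1/4}|y|$, hence $Z_i \geq \tfrac12 |j|^{1/2} y_i$); and your bilinear identity is precisely the source of the gap principle $Z_i \geq \tfrac{|j|}{2h}Z_{i-1}^{r-1}$ (Lemma \ref{gap_Siegel}). The gap is in the step that is supposed to close the argument. Iterating the gap principle only shows that the solutions in a fixed class are extremely sparse, $y_N \geq (\cdots)\, y_1^{(r-1)^{N-1}}$; this is a \emph{lower} bound on $y_N$ and is perfectly compatible with the class containing arbitrarily many solutions. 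The contradiction you invoke --- that the iterated lower bound on $y_{m+1}$ conflicts with $|\ell_k(x_{m+1},y_{m+1})|\leq h^{1/r}$ --- is not a contradiction: since $|\ell_k|\leq c_0 h/y^{r-1}$, the larger $y_{m+1}$ is, the more comfortably that pigeonhole bound is satisfied, and the minimal factor moves \emph{away} from its neighbours rather than equalizing with them. A gap principle by itself can never bound the number of solutions; it must be played against an inequality pointing the other way.

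What is missing is the entire hypergeometric apparatus of Sections \ref{Pade}--\ref{AAL}, which supplies that countervailing inequality. For two consecutive solutions in a class, the Pad\'e approximants $A_{n,g}, B_{n,g}$ to $(1-z)^{1/r}$ produce algebraic numbers $\Lambda_{n,g}$, $\tilde\Lambda_{n,g}$ with $|\Lambda_{n,g}\tilde\Lambda_{n,g}|\geq 1$ that are simultaneously small, yielding for \emph{every} $n\geq 1$ the bound
\begin{equation*}
Z_k\;\geq\;\frac{Z_{k-1}^{(n+1)r-1}}{2^{n+4}\, r^{(3nr+2)/(r-2)}\,|j|^{(nr+2)/(r-2)}\, h^{2n+1}}
\end{equation*}
of Lemma \ref{induction}. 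The exponents $i_1$, $i_2$, $i_3$ in $Y_L$ are calibrated not to make an $(m+1)$-st solution collide with a trivial bound, but to guarantee (via $y_1\geq Y_L$, the reducedness estimate, and $m-1$ applications of the gap principle --- the conditions $(v)$--$(vii)$ in the paper's proof) that $Z_{k-1}$ exceeds the threshold beyond which the right-hand side above tends to infinity with $n$; that divergence is the actual contradiction showing each class contains at most $m$ such solutions. Without this ingredient, or some substitute effective irrationality measure, your argument cannot terminate.
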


\begin{thm}\label{thmH}
Suppose $F(x , y)$ is a diagonalizable form  of degree $r \geq 5.$ Let $m\geq 3$ be an integer and
\be\label{HL}
	H_L=r^{i_4}h^{i_5}|j|^{i_6},
\ee
with
	\[
	i_4=5+\frac{11r-3}{(r-1)^{m-1}},\ i_5=2+\frac{2(r-3)}{(r-1)^{m-1}}
	\]
and
	\[
	i_6=\begin{cases}
		2 &\mbox{if } |j|\geq 1\\
		\frac{2}{(r-3)(r-1)^{m-1}} &\mbox {if } |j|<1.
		\end{cases}
	\]
	Let $
	 H(x,y) = F_{xx} F_{yy} - F^{2}_{xy}
	 $ be the  Hessian  of $F(x , y)$.	 
Then the number of solutions $(x,y)$
of \eqref{firstineq} satisfying $|H(x,y)|\geq H_L$ is bounded by
$$
\begin{cases} rm &\mbox{if } D<0 \\ 
2m & \mbox{if } D>0, \ r \textrm{ is even and $F$ is indefinite}\\
m & \mbox{if } D>0, \ r \textrm{ is odd and $F$ is indefinite}\\
1 & \mbox{if } D>0 \textrm{ and $F$ is definite}.
\end{cases} 
$$

\end{thm}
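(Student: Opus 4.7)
The plan is to adapt the template of Theorem~\ref{thml} and Theorem~\ref{thmy} --- classify primitive solutions by which $r$-th root of unity $L_1/L_2$ approximates, establish a gap principle within each class via the hypergeometric method, and iterate --- but with the ``large solution'' condition switched from ``$h$ small compared to $\Delta$'' or ``$y\geq Y_L$'' to $|H(x,y)|\geq H_L$.

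The opening step is an explicit Hessian identity. Writing $L_1=\alpha x+\beta y$ and $L_2=\gamma x+\delta y$, direct differentiation of $F=L_1^r-L_2^r$ gives
\[
H=F_{xx}F_{yy}-F_{xy}^{2}=-r^{2}(r-1)^{2}\,j^{2}\,(L_1L_2)^{r-2},
\]
so $|H(x,y)|\geq H_L$ is equivalent to the clean lower bound
\[
|L_1 L_2|\;\geq\;\Bigl(\frac{H_L}{r^{2}(r-1)^{2}j^{2}}\Bigr)^{1/(r-2)}.
\]
Since $0<|L_1^r-L_2^r|\leq h$, at least one ratio $L_1/(\zeta^k L_2)$ must lie close to $1$ for some $r$-th root of unity $\zeta^k$; combining this with the product lower bound on $|L_1L_2|$ forces an individual lower bound on each of $|L_1|,|L_2|$, yielding an approximation of the form $|L_1-\zeta^k L_2|\leq C_0\,h\,|L_1L_2|^{-(r-1)/2}$ of exactly the quality demanded by the hypergeometric machinery.

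Next I would partition the admissible solutions into $r$ ``types'' indexed by the nearest $\zeta^k$. When $D>0$ only the real roots $\zeta^k=\pm 1$ yield real pairings, which is the source of the reductions to $2m$, $m$, and $1$ depending on the parity of $r$ and whether $F$ is definite or indefinite; when $D<0$, all $r$ types survive. Inside a single type, two solutions of comparable size would give two very good and mutually incompatible rational approximations to the same algebraic number $\zeta^k(\alpha+\beta\tau)/(\gamma+\delta\tau)$ with $\tau=x/y$; quantifying this incompatibility through the Pad\'{e} approximants to $(1+z)^{1/r}$ in Evertse's refined form produces a gap principle forcing consecutive same-type solutions to grow rapidly. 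Iterating the gap principle $m-1$ times upgrades the ``one solution per type'' bound to ``$m$ solutions per type,'' and it is this iteration that manufactures the factor $(r-1)^{m-1}$ appearing in the exponents $i_4$ and $i_5$ of $H_L$.

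The main obstacle will be the exponent bookkeeping. The hypergeometric inequality is naturally stated in terms of $y$ or $\max(|L_1|,|L_2|)$, whereas the hypothesis here controls only the symmetric product $|L_1L_2|$ through $|H|$. Translating between these scales, carrying along the factor $|j|$ coming from \eqref{chiquad} and the identity $j^{2}=\chi^{2}D$, and threading the dichotomy $|j|\geq 1$ versus $|j|<1$ --- which is exactly what splits $i_6$ into two cases, since when $|j|<1$ a negative power of $|j|$ can no longer be absorbed as $\leq 1$ in the chain of inequalities --- will consume most of the effort. I do not anticipate any new analytic estimate beyond those already driving Theorems~\ref{thml} and~\ref{thmy}.
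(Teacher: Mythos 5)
Your proposal follows essentially the same route as the paper: the Hessian identity $H=-r^2(r-1)^2j^2(uv)^{r-2}$ converts the hypothesis $|H(x,y)|\geq H_L$ into a lower bound on $Z=\max(|u|,|v|)$, after which the paper likewise partitions solutions by the related $r$-th root of unity (with the $D>0$ reduction to one or two roots), runs the hypergeometric gap principle through the induction of Lemma \ref{induction} with modified exponent conditions, and handles the $|j|\geq 1$ versus $|j|<1$ dichotomy exactly where you predict it enters. The only cosmetic difference is that the paper derives a contradiction by letting the induction parameter $n\to\infty$ rather than by counting iterations directly, but the mechanism and bookkeeping are the same.
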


Finally, we will prove a theorem on diagonalizable Thue equations.

\begin{thm}\label{BSchfordiag}
Suppose $F$ is a diagonalizable form with degree $r \geq 5$ and discriminant $\Delta$. Let $h$ be an integer such that
 $\gcd(h , \Delta) = 1$.
	  If
	\[
		|\Delta| \geq 2^{r^2-r} r^{r+7r(r-1)/(r-4)},
	\]
where $\Delta$ is the discriminant of $F$, then the number of solutions of the equation $|F(x , y)| =h$
 is bounded by
	\[
	 \begin{cases} 3r^{1+\omega(h)} &\mbox{if } D<0 \\ 
6 r^{\omega(h)}& \mbox{if } D>0, \ r \textrm{ is even and $F$ is indefinite}\\
3  r^{\omega(h)}& \mbox{if } D>0, \ r \textrm{ is odd and $F$ is indefinite}\\
 r^{\omega(h)} & \mbox{if } D>0 \textrm{ and $F$ is definite},
\end{cases} 
	\]
	where $\omega(h)$ denotes the number of prime divisors of $h$.

\end{thm}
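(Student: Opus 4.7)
The plan is to combine the Bombieri--Schmidt/Stewart reduction outlined in Section \ref{Intro} with Corollary \ref{cork=3} applied at $h=1$. The coprimality hypothesis $\gcd(h,\Delta)=1$ plays an essential role, and the pivotal observation (alluded to in the introduction and to be developed in Section \ref{DTE}) is that when the reduction is applied to a diagonalizable form, every resulting subsidiary form is itself diagonalizable of the same degree. This special behaviour of diagonalizable forms is precisely what allows the hypergeometric machinery encoded in Corollary \ref{cork=3} to be reused at the reduced level.

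I would organize the argument as follows. Fix a primitive solution $(x,y)$ of $F(x,y)=\pm h$. For each prime $p\mid h$ with $p^{e}\|h$, the coprimality $\gcd(h,\Delta)=1$, together with the identity $j^{2}=\chi^{2}D$, ensures that the linear forms $\alpha x+\beta y$ and $\gamma x+\delta y$ remain linearly independent modulo $p$. Hence the congruence $(\alpha x+\beta y)^{r}\equiv(\gamma x+\delta y)^{r}\pmod{p^{e}}$ forces
\[
\alpha x+\beta y\equiv \zeta(\gamma x+\delta y)\pmod{p^{e}}
\]
for one of at most $r$ possible $r$-th roots of unity $\zeta$. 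Running over all $p\mid h$ and all choices of $\zeta_{p}$ partitions the primitive solutions of $F=\pm h$ into at most $r^{\omega(h)}$ classes, each of which, after a unimodular change of variables followed by division by the appropriate prime powers, is transformed into a solution of a new equation $G(u,v)=\pm 1$, where
\[
G(u,v)=(\alpha'u+\beta'v)^{r}-(\gamma'u+\delta'v)^{r},\qquad \alpha'\delta'-\beta'\gamma'\neq 0
\]
is again diagonalizable of degree $r$.

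Next, I would compare discriminants. A direct calculation shows $|\Delta(G)|$ differs from $|\Delta(F)|$ only by factors involving primes dividing $h$, which are coprime to $\Delta$ and to $r$. Under the standing assumption
\[
|\Delta|\ge 2^{r^{2}-r}r^{\,r+7r(r-1)/(r-4)},
\]
one therefore obtains $\Delta'(G)\ge r^{\,7r(r-1)/(r-4)}$ at the value $h=1$, which is exactly the range where Corollary \ref{cork=3} applies. That corollary bounds the number of primitive solutions of $G=\pm 1$ by $3r$, $6$, $3$, or $1$ according to the sign of $D$, the parity of $r$, and the definiteness of $F$. Summing over the $r^{\omega(h)}$ classes immediately yields the four cases in the conclusion.

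The main obstacle is the first step: verifying that the Bombieri--Schmidt substitutions really preserve the diagonalizable structure. Concretely, one must confirm that the linear forms $\alpha x+\beta y$ and $\gamma x+\delta y$ over the splitting field transform into linear forms whose $r$-th power difference, after removing the appropriate power of $p$, equals the new integral form $G$, and that the discriminant does not collapse under these substitutions. This is the structural observation promised in Section \ref{DTE}; once it is in place, the deduction from Corollary \ref{cork=3} is essentially formal.
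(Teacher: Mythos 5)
Your proposal follows essentially the same route as the paper: the paper likewise combines the Bombieri--Schmidt/Stewart reduction (Proposition \ref{BSCH} and Lemma \ref{howDeltachanges}, which record that the reduction preserves diagonalizability and that $|\Delta(\tilde F)|\ge h^{(r-1)(r-2)}|\Delta(F)|$ when $\gcd(h,\Delta)=1$) with Corollary \ref{cork=3} applied at $h=1$, then multiplies by $r^{\omega(h)}$. The structural step you flag as the ``main obstacle'' is exactly what the paper settles by citing \cite{Bos} and \cite{Ste}, so your outline is correct and matches the published argument.
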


The outline of this manuscript is as follows. 
In Section \ref{compareSiegel} we compare Siegel's main theorem in \cite{Sie3} to our Theorem  \ref{thml}. In Section \ref{pre} we recall some known facts that  are going to be used in our proofs. In Section \ref{EF} we introduce reduced diagonalizable forms and some of their properties. In Section \ref{GP} we will establish some important gap principles which will be a major part of our proofs. 
Another essential ingredient in establishing our results is the 
 use of the  hypergeometric
method together with the construction of some  sequences of
algebraic numbers. Our proofs are based on the work of  Siegel in \cite{Sie3} and its improvements for cubic forms in  papers of Evertse 
\cite{Eve1} and  Bennett \cite{Ben2}. These analytic  tools and their adjustments   are developed in Sections \ref{Pade}, \ref{AN} and \ref{AAL}. In the remaining final sections we complete our proofs.

\section{Siegel's Theorem on diagonalizable forms}\label{compareSiegel}

In \cite{Sie3} Siegel proved the following theorem.
	\begin{thm}[Siegel]\label{siegels}
	Assume that $F(x , y)$ is a diagonalizable form of degree $r$ and with discriminant $\Delta$. Suppose that
	\begin{equation}\label{siedel}
		\Delta' > \left(r^4 h\right)^{c_{l} r^{2-l}}
	\end{equation}
	where $r \geq 6-l,l=1,2,3,$
	\begin{equation}\label{c1c2c3}
c_1=45+\frac{593}{913},\ c_2=6+\frac{134}{4583}\textrm{ and } c_3=75+\frac{156}{167},
\end{equation}
and $\Delta'$ is defined in \eqref{Deltaprime}.
	Then
	\begin{equation}\label{NFh}
		N_F(h)\leq \begin{cases} 2lr &\mbox{if } D<0 \\ 
	4l & \mbox{if } D>0, \ r \textrm{ is even and $F$ is indefinite}\\
2l & \mbox{if } D>0, \ r \textrm{ is odd and $F$ is indefinite}\\
1 & \mbox{if } D>0 \textrm{ and $F$ is definite}.
\end{cases} 
		\end{equation}
	\end{thm}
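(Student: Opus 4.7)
The plan is to convert the inequality $0<|F(x,y)|\leq h$ into a Diophantine approximation problem for the ratio $\lambda(x,y)=(\alpha x+\beta y)/(\gamma x+\delta y)$, and then combine a gap principle with the Thue--Siegel hypergeometric method to count the solutions. Writing $\zeta=e^{2\pi i/r}$, the factorisation
\[
F(x,y)=\prod_{k=0}^{r-1}\bigl((\alpha x+\beta y)-\zeta^{k}(\gamma x+\delta y)\bigr)
\]
forces, for every solution, $\lambda(x,y)$ to lie extremely close to a single $\zeta^{k}$, and this classifies the primitive solutions into at most $r$ classes indexed by the nearest root of unity. Whether each class can actually be populated depends on the sign of $D=B^{2}-4AC$ from \eqref{defofD}: when $D>0$ the linear forms $\alpha x+\beta y$ and $\gamma x+\delta y$ may be taken real, so $\lambda\in\mathbb{R}$ must approach a real root of unity ($\pm 1$ if $r$ is even, $+1$ if $r$ is odd), and definiteness of $F$ eliminates all but a single distinguished class, accounting for the four cases in \eqref{NFh}.

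Within each populated class I would prove a gap principle. If $(x_{1},y_{1})$ and $(x_{2},y_{2})$ are two primitive solutions whose $\lambda$-values are both close to the same $\zeta^{k}$, the elementary identity
\[
(\alpha x_{1}+\beta y_{1})(\gamma x_{2}+\delta y_{2})-(\gamma x_{1}+\delta y_{1})(\alpha x_{2}+\beta y_{2})=j(x_{1}y_{2}-x_{2}y_{1})
\]
combined with \eqref{chiquad} forces the heights $\max(|x_{i}|,|y_{i}|)$ of the two solutions to be separated by a geometric factor depending on $\Delta'$ and $h$. Iterating this principle $l$ times compresses the set of possible solutions per class at the price of a stronger hypothesis on $\Delta'$, which is exactly the shape of \eqref{siedel}.

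The third ingredient is the hypergeometric method itself. The classical Pad\'e approximations to $(1-z)^{1/r}$, together with Chudnovsky-style denominator estimates for the approximating polynomials, yield an effective lower bound of the shape $|\lambda(x,y)-\zeta^{k}|\gg Y^{-(r-\kappa)}$ for each solution of sufficiently large height $Y=\max(|x|,|y|)$, with $\kappa<1$ determined by how aggressively the construction is pushed. Comparing this with the upper bound $|\lambda(x,y)-\zeta^{k}|\ll h/Y^{r}$ implied by $|F(x,y)|\leq h$ produces a contradiction once $Y$ exceeds a threshold expressible in terms of $r$, $h$, and $\Delta'$; the $l$-fold iteration of the gap principle is what is needed to push every solution in each class past this threshold, leaving at most $2l$ solutions per class and hence the bounds $2lr,4l,2l,1$ of \eqref{NFh}.

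The main obstacle is the numerical tuning rather than the high-level structure. The explicit rationals $c_{1}=45+\tfrac{593}{913}$, $c_{2}=6+\tfrac{134}{4583}$, $c_{3}=75+\tfrac{156}{167}$ in \eqref{c1c2c3} emerge from optimising, at each level $l\in\{1,2,3\}$, the strength of the hypergeometric lower bound (which weakens as $l$ grows or $r$ shrinks toward $6-l$) against the cumulative loss incurred by iterating the gap principle, all while maintaining uniformity across the four signature cases. Keeping the denominators as tight as $913$, $4583$ and $167$ requires a delicate balance of parameters in the Pad\'e construction and in the gap-principle thresholds, and this book-keeping is where almost the entire technical burden of the argument is concentrated.
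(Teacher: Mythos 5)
You should first be aware that the paper does not prove this statement at all: Theorem~\ref{siegels} is quoted verbatim from Siegel's paper \cite{Sie3} and is used as a black box (for comparison with Theorem~\ref{thml} and in the proof of Theorem~\ref{binomial=}). So there is no internal proof to measure your attempt against; the relevant benchmark is Siegel's original argument, whose architecture the present paper reproduces in Sections~4--8 for its own theorems. Your outline does capture that architecture correctly: partition the solutions by the nearest $r$-th root of unity (the paper's Definition~\ref{defofrelated} and Lemma~\ref{positiveDforms}, which is what reduces the count to $r$, $2$, $1$, or $1$ classes in the four cases of \eqref{NFh}); a gap principle from the identity $u_1v_2-u_2v_1=j(x_1y_2-x_2y_1)$ (Lemmas~\ref{Except1} and \ref{gap_Siegel}); and Pad\'e approximation to $(1-z)^{1/r}$ with integrality of the resulting algebraic numbers (Lemmas~\ref{hyp}--\ref{nv}).

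The genuine gap is that everything which makes the statement a theorem rather than a heuristic is deferred. Your outline would "prove" the result with any constants in place of $c_1,c_2,c_3$ and any counts in place of $2l$ per class: nothing in it pins down why exactly $2l$ solutions survive in a class, nor how the exponent $c_l r^{2-l}$ arises. Concretely, the missing steps are (a) the quantitative gap principle $Z_i\geq \frac{|j|}{2h}Z_{i-1}^{r-1}$ --- note this is a \emph{power} gap with exponent $r-1$, not the ``geometric factor'' you describe; a merely geometric separation would not terminate the count --- together with its iterated form $Z_k\geq (|j|/2h)^{(R(k)-1)/(r-2)}Z_1^{R(k)}$; (b) the explicit upper bound on $|\Lambda_{n,g}\tilde\Lambda_{n,g}|$ from the Pad\'e construction, played against the lower bound $\geq 1$ coming from integrality in $\mathcal{O}$, and the non-vanishing lemma that lets one pass from $\Sigma_{n,0}$ to $\Sigma_{n+I,1}$ when the former vanishes; and (c) the induction on $n$ that sends the resulting lower bound for $Z_k$ to infinity, whose feasibility conditions are precisely what determine $c_l$. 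There is also an unaddressed subtlety at the start: a solution is only forced close to a root of unity when $\zeta(x,y)<1$, which fails for at least one ``exceptional'' solution; the hypothesis on $\Delta'$ must first be converted (via $\Delta=\pm r^rj^{r(r-1)}$) into a lower bound on $|j|$ strong enough to guarantee that all remaining solutions satisfy $\zeta<1$, and the exceptional solution must be counted separately. As written, your proposal is a correct road map but not a proof, and you say as much yourself in the final paragraph.
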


\noindent In particular, if $D<0$ and $l=1,$ then $N_F(h)\leq 2r$ provided
$$|\Delta|>2^{r^2-r}r^{183.6r}h^{47.6r-2}.$$

Notice that in Theorem \ref{2r1}, the lower bound for $\Delta'$ is asymptotic to $r^{13r}h^{4r}$. In Theorem \ref{thml},
$$\alpha_1\sim\frac{7}{r^{m-4}}\ \ \textrm{and}\ \ \alpha_2\sim\frac{1}{r^{m-4}}.$$

To compare Theorem \ref{thml} with Theorem \ref{siegels}, we take $m=2l$ with $l=2,3$. Then the bounds for $N_F(h)$ in \eqref{NFh} and \eqref{kr} coincide. Corresponding to $\alpha_1$ and $\alpha_2$ in \eqref{Delta_condn}, we have $4c_{l} r^{2-l}$ and $c_{l} r^{2-l}$, respectively in \eqref{siedel}. Table \ref{table} provides the values of these quantities.

	\begin{table}[ht]
	\caption{Comparison of Theorem \ref{thml} with Siegel's Theorem } 
	\centering 
	\begin{tabular}{c c c } 
	\hline
	 & $\alpha_1$ &$\alpha_2$ \\ [0.15cm]
	\hline \hline 
	$m=4$ & & \\[0.05cm]
	\hline\hline\\ [-0.15cm]
	Theorem \ref{siegels} & $24+\frac{536}{4583}$ & $6+\frac{134}{4583}$\\[1.2ex]
	Theorem \ref{thml} & $7+\frac{7(2r^2-r+2)}{r^3-3r^2+r-2}$ & $1+\frac{3r^2}{r^3-3r^2+r-2}$\\ [0.15cm]
	\hline\hline 
	$m=6$ & & \\[0.05cm]
	\hline\hline \\ [-0.15cm]
	Theorem \ref{siegels} & $\frac{1}{r}(303+\frac{123}{167})$ & $\frac{1}{r}(75+\frac{156}{167})$\\[1.2ex]
	Theorem \ref{thml} & $\frac{7r^2(r-1)}{r^5-5r^4+10r^3-10r^2+3r-2}$ & $\frac{(r-1)(r^2+r+2)}{r^5-5r^4+10r^3-10r^2+3r-2}$\\[1.2ex] 
	\hline
	\end{tabular}
	\label{table} 
	\end{table}

\vskip 2mm

\noindent \textbf{Remark.} In \cite{Sie3}, Siegel considered the form
	$$
	 F(x , y) = (\alpha x + \beta y)^r + (\gamma x + \delta y)^r.
	 $$ 
Such a form can be represented as in \eqref{form}. Let $\omega$ be an $r$-th root of $-1$. Then
	\[
	F(x , y) = (\alpha x + \beta y)^r - (\gamma' x + \delta' y)^r,
	\]
where $\gamma'=\omega\gamma$ and $\delta'=\omega\delta$. Hence there
is no loss of generality in assuming that $F(x , y)$ is of the form
\eqref{form}.

\section{Preliminaries}\label{pre}
In this section we survey several facts about diagonalizable forms.  Most  of these facts can be found in \cite[p.148-149]{Sie3}.  Let $(x,y)$ be a generic primitive solution of \eqref{firstineq}. If $f$ is \textit{any} function of $(x,y)$, then we write
	\[
	f=f(x,y).
	\]
While enumerating the solutions of \eqref{firstineq} as $(x_0,y_0)$, $(x_1,y_1)$, $\ldots$, we denote by
	\[
	f_i=f(x_i,y_i),\ i\geq 0.
	\]
Let
	$$
	u=\alpha x + \beta y,\ v=\gamma x + \delta y,\ \xi = u^r \textrm{ and } \eta=v^r. 
	$$
Then $F(x , y) = \xi  - \eta=u^r-v^r.$ For any solution $(x,y)$, we have $(u(x,y),v(x,y))\neq(0,0)$ as $|F(x,y)|>0.$
 The Hessian $H$ and the Jacobian $P$ of $F$ are defined as 
	 $$
	 H=H_F(x,y) = F_{xx} F_{yy} - F^{2}_{xy}
	 $$
 and
	 $$
	 P=P_F(x,y) = F_{x} H_{y} - F_{y} H_{x},
	 $$
 respectively. It can be seen that
 \begin{equation}\label{deltainrj}
	 \Delta = (-1)^{\frac{(r-1)(r+2)}{2}} r^r j^{r(r-1)},
	 \end{equation}
	  \begin{equation}\label{Hinrj}
	 H = -r^2 (r -1)^2 j^2  (uv)^{r-2}   
 \end{equation}
and
	 \begin{equation}\label{Pinrj}
 P = -r^3 (r -1)^2 (r -2) j^3  (uv)^{r-3} (\xi + \eta).
 \end{equation}
Clearly if the coefficients of $F$ are all integers then $\Delta$ is
an integer and the coefficients of $H$ and $P$ are also integers. From \eqref{chiquad}, we have
	\begin{equation}\label{A,CinZ}
	\frac{\alpha \gamma}{\chi} = A \in \mathbb{Z}, \, \, \frac{\beta \delta}{\chi}= C \in \mathbb{Z},
\end{equation}
	$$
	\frac{1}{\chi} \left(\alpha \delta +  \beta \gamma\right) = B \in \mathbb{Z}.
	$$
We also have
	$$
	\frac{j}{\chi} = \frac{\alpha \delta -  \beta \gamma}{\chi} = \pm \sqrt{D} = d_{1}.
	$$
Therefore
	\begin{equation}\label{Bd1}
	\frac{\alpha \delta}{\chi}  = \frac{1}{2} \left(B + d_{1}   \right) \, \,   \textrm{and}\,   \frac{\beta \gamma }{\chi}  = \frac{1}{2} \left(B - d_{1}   \right).
	\end{equation}
A result of Gauss (see \cite{Sie3} for details) implies that 
	\begin{equation}\label{LGauss}
	L = r^2 (r - 1)^2 j^2 \chi^{r-2} = r^2 (r-1)^2 \chi^r D \in \mathbb{Z}
	\end{equation}
and hence
	\begin{equation}\label{chirat}
	\chi^{r} \in \mathbb{Q}.
\end{equation}
Now from \eqref{Pinrj} we have
	$$
	r^3 (r -1)^2 (r -2) \chi^{r} D   (Ax^2 + B xy+ Cy^2)^{r-3}  \sqrt{D}(\xi + \eta)\in \mathbb Z[x,y].
$$
Thus we get the following lemma.
\begin{lemma}\label{rationalcoeff}
The  binary form $\sqrt{D} (\xi + \eta)$ in $x$, $y$ has rational coefficients.
\end{lemma}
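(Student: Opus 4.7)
The plan is to extract the lemma directly from the Jacobian formula \eqref{Pinrj}. Starting from
\[P \;=\; -r^3(r-1)^2(r-2)\,j^3(uv)^{r-3}(\xi+\eta),\]
I would make two substitutions. First, by \eqref{chiquad}, $(uv)^{r-3} = \chi^{r-3}(Ax^2+Bxy+Cy^2)^{r-3}$. Second, since $j/\chi = d_1 = \pm\sqrt{D}$, we have
\[j^{3}\chi^{r-3} \;=\; \chi^{r}(j/\chi)^3 \;=\; \pm\,\chi^{r} D\sqrt{D}.\]
Combining these repackages the Jacobian identity as
\[P \;=\; \mp\,r^3(r-1)^2(r-2)\,\chi^{r} D\,(Ax^2+Bxy+Cy^2)^{r-3}\,\sqrt{D}(\xi+\eta),\]
which is precisely the identity displayed just before the lemma and which isolates all non-rational data into the single factor $\sqrt{D}(\xi+\eta)$.

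To finish, I would argue as follows. Since $F\in\mathbb{Z}[x,y]$ its Jacobian $P$ lies in $\mathbb{Z}[x,y]$; the constant $\chi^{r}$ is rational by \eqref{chirat}; and $(Ax^{2}+Bxy+Cy^{2})^{r-3}\in\mathbb{Z}[x,y]$ is a nonzero polynomial. Solving the identity for $\sqrt{D}(\xi+\eta)$ therefore exhibits it as a quotient of two elements of $\mathbb{Q}[x,y]$, so it lies a priori in $\mathbb{Q}(x,y)$. On the other hand, $\sqrt{D}(\xi+\eta)=\sqrt{D}(u^{r}+v^{r})$ is manifestly a polynomial in $x,y$ (with coefficients in the algebraic extension generated by $\alpha,\beta,\gamma,\delta,\sqrt{D}$). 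A rational function in $\mathbb{Q}(x,y)$ that is also a polynomial must lie in $\mathbb{Q}[x,y]$, because $\mathbb{Q}[x,y]$ is a unique factorization domain (equivalently, exact polynomial division preserves the coefficient field). This yields the claim.

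There is no genuine obstacle here: the computational content is entirely in the two algebraic substitutions above, and the concluding step is the standard fact that $\mathbb{Q}[x,y]$ is integrally closed in $\mathbb{Q}(x,y)$. The only detail worth flagging is that one must invoke \eqref{chirat} (rather than merely the rationality of $\chi^{2}$, which is immediate from $j^{2}=\chi^{2}D$) in order to push the prefactor into $\mathbb{Q}[x,y]$.
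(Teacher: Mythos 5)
Your proof is correct and follows essentially the same route as the paper: both extract the identity $P=\mp r^3(r-1)^2(r-2)\,\chi^{r}D\,(Ax^2+Bxy+Cy^2)^{r-3}\,\sqrt{D}(\xi+\eta)$ from \eqref{Pinrj} via \eqref{chiquad} and $j/\chi=\pm\sqrt{D}$, and then use $P\in\mathbb{Z}[x,y]$ together with $\chi^{r}\in\mathbb{Q}$ from \eqref{chirat} to divide off the rational prefactor. Your closing remark about needing \eqref{chirat} (not merely $j^2=\chi^2D$) and your justification of the final division step are both sound refinements of what the paper leaves implicit.
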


Let 
	$$
	Q(x , y) = \sqrt{D} (\xi + \eta).
	$$
We have
	\begin{equation}\label{conjugates}
	\xi(x , y) = \frac{F(x , y)}{2} + \frac{Q(x , y)}{2\sqrt{D}}\, \, \textrm{and}\, \, \eta(x , y) = -\frac{F(x , y)}{2} + \frac{Q(x , y)}{2\sqrt{D}}.
	\end{equation}
Now we will consider the number field $\mathbb{Q}(\sqrt{D})$.
If $\mathbb{Q}(\sqrt{D}) = \mathbb{Q}$ then both $\xi = (\alpha x + \beta y)^r$ and  $\eta= (\gamma x + \delta y)^r$ have rational coefficients. Furthermore, if $\mathbb{Q}(\sqrt{D}) = \mathbb{Q}$, after a change of variable, we may assume that  $\alpha \gamma \neq 0$.
If $\mathbb{Q}(\sqrt{D}) \neq \mathbb{Q}$, the corresponding coefficients of the forms $\xi$ and $-\eta$ are  conjugates in $\mathbb{Q}(\sqrt{D})$ by \eqref{conjugates}.  Therefore by \eqref{integralcondition}, we conclude that if $\mathbb{Q}(\sqrt{D}) \neq \mathbb{Q}$, then $\alpha \neq 0$ and $\gamma \neq 0$.  We may write
	\begin{equation}\label{SiegelE22}
	\xi = \alpha_{1}(x + \beta_{1} y)^r, \, \eta = \gamma_{1}(x + \delta_{1} y)^r
	\end{equation}
with 
	$$
	\alpha_{1} = \alpha^{r},\,  \gamma_{1} = \gamma^{r}, \, \beta_{1} = \frac{\beta}{\alpha}, \, \delta_{1} = \frac{\delta}{\gamma},\, \delta_{1} - \beta_{1} = \frac{j}{\alpha \gamma} \neq 0.
	$$
Then $\alpha_{1}$, $-\gamma_{1}$ and $\beta_{1}$, $\delta_{1}$ are either all rational numbers or pairs of algebraic conjugates in $\mathbb{Q}(\sqrt{D})$. Thus when $D<0$, we have
	\[
	|\xi|=|\eta| \textrm{ and } |u|=|v|.
	\]
	
	Throughout the rest of this manuscript, we may  assume, without loss of generality, that  $\alpha \gamma \neq 0$. 	
		
Let $\mathcal{O}$ be the ring of integers in $\mathbb{Q}(\sqrt{D})$. 
	\begin{lemma}\label{Siegelshows}
	All the  coefficients of $r (r-1) \sqrt{D} \xi$ and 
$r(r -1) \sqrt{D}\eta$ are in $\mathcal{O}$. 
	\end{lemma}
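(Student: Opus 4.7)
The plan is to prove that each coefficient of $r(r-1)\sqrt{D}\xi$ is a root of a monic polynomial with integer coefficients, placing it in $\mathcal{O}$; the analogous claim for $r(r-1)\sqrt{D}\eta$ will then follow at once, since its coefficients are the Galois conjugates of those of $r(r-1)\sqrt{D}\xi$ under the nontrivial automorphism $\sigma$ of $\mathbb{Q}(\sqrt{D})/\mathbb{Q}$, and $\mathcal{O}$ is $\sigma$-stable.

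Fix $0\leq k\leq r$ and set $W_k:=r(r-1)\sqrt{D}\binom{r}{k}\alpha^{r-k}\beta^k$. The description following \eqref{SiegelE22} gives that $\sigma$ interchanges $\alpha_1\leftrightarrow-\gamma_1$ and $\beta_1\leftrightarrow\delta_1$, so $\sigma(\alpha^{r-k}\beta^k)=-\gamma^{r-k}\delta^k$; consequently $\sigma(W_k)=r(r-1)\sqrt{D}\binom{r}{k}\gamma^{r-k}\delta^k$, which is the corresponding coefficient of $r(r-1)\sqrt{D}\eta$. It therefore suffices to show that both the trace $W_k+\sigma(W_k)$ and the norm $W_k\sigma(W_k)$ lie in $\mathbb{Z}$. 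The norm is the easy half: using $\alpha\gamma/\chi=A$ and $\beta\delta/\chi=C$ from \eqref{A,CinZ}, it collapses to $\binom{r}{k}^2 A^{r-k}C^k\cdot r^2(r-1)^2\chi^r D=\binom{r}{k}^2 A^{r-k}C^k L$, which lies in $\mathbb{Z}$ by Gauss's identity \eqref{LGauss}.

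For the trace I would write $\alpha^{r-k}\beta^k=t_k/2+s_k\sqrt{D}$ with $t_k,s_k\in\mathbb{Q}$. Then $\binom{r}{k}t_k$ is the coefficient of $x^{r-k}y^k$ in $F=\xi-\eta$ and so is an integer, while a brief expansion gives $W_k+\sigma(W_k)=2r(r-1)D\binom{r}{k}s_k$. The $\mathbb{Q}$-norm relation $(t_k/2)^2-Ds_k^2=-A^{r-k}C^k\chi^r$ for $\alpha^{r-k}\beta^k$, after multiplication by $4r^2(r-1)^2 D\binom{r}{k}^2$ and a further appeal to \eqref{LGauss}, becomes
$$\bigl(2r(r-1)D\binom{r}{k}s_k\bigr)^2=r^2(r-1)^2 D\bigl(\binom{r}{k}t_k\bigr)^2+4\binom{r}{k}^2 A^{r-k}C^k L\in\mathbb{Z}.$$
Since $2r(r-1)D\binom{r}{k}s_k\in\mathbb{Q}$ and its square is a rational integer, the quantity is itself a rational integer, so the trace lies in $\mathbb{Z}$.

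The trace step is the main obstacle: the denominator hidden inside $\chi^r$ is a priori uncontrolled, and it is exactly the denominator bound supplied by \eqref{LGauss}, together with the scaling factor $r(r-1)$ that appears in the lemma, that forces the rational trace into $\mathbb{Z}$. With trace and norm both integers, $W_k$ satisfies a monic quadratic over $\mathbb{Z}$, so $W_k\in\mathcal{O}$, and the lemma follows.
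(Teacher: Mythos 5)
Your proof is correct, but it takes a genuinely different route from the paper's. The paper works with the whole forms at once: it notes that $r^2(r-1)^2 D\,\xi\eta = L(Ax^2+Bxy+Cy^2)^r \in \mathbb{Z}[x,y]$ by \eqref{LGauss}, writes $\tfrac{r(r-1)}{2}\sqrt{D}(\xi+\eta)$ as the square root of the integral form $\bigl(\tfrac{r(r-1)}{2}\bigr)^2 D(\xi-\eta)^2 + r^2(r-1)^2D\,\xi\eta$, and concludes coefficient-wise integrality from there --- a step that implicitly invokes the Kronecker/Dedekind content theorem (a form whose square has coefficients in $\mathcal{O}$ has coefficients in $\mathcal{O}$). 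You instead pin down each coefficient $W_k$ individually as a root of an explicit monic quadratic over $\mathbb{Z}$, computing its sum and product with the matching coefficient of $r(r-1)\sqrt{D}\eta$; the product reduces to $\binom{r}{k}^2A^{r-k}C^kL$ exactly as in the paper's use of \eqref{LGauss}, and the sum is handled by the observation that a rational number with integral square is an integer. Your version is more elementary and self-contained (no content lemma needed), at the cost of being longer; both arguments ultimately rest on the same two inputs, namely $F\in\mathbb{Z}[x,y]$ and Gauss's identity \eqref{LGauss} controlling the denominator of $\chi^r$. One cosmetic caveat: when $\mathbb{Q}(\sqrt{D})=\mathbb{Q}$ there is no nontrivial automorphism $\sigma$, so you should phrase the argument as verifying directly that $W_k + W_k'$ and $W_kW_k'$ lie in $\mathbb{Z}$, where $W_k'$ is the corresponding coefficient of $r(r-1)\sqrt{D}\eta$; your algebra already does exactly this and needs no change, and in that degenerate case the conclusion is that $W_k$ is a rational algebraic integer, hence in $\mathbb{Z}=\mathcal{O}\cap\mathbb{Q}$.
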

	\begin{proof}
	We have
	$$
	\xi \eta =(uv)^r=  (\alpha x + \beta y)^r  (\gamma x + \delta y)^r  = \chi^r (A x^2 + B xy + Cy^2)^r
	$$
and therefore
	\begin{equation}\label{rLD}
	r^2 (r-1)^2 D \xi \eta = L (Ax^2 + B x y + C y^2)^r \in \mathbb{Z}[x , y].
	\end{equation}
	Since $\xi - \eta \in \mathbb{Z}[x , y]$ and
		$$
		\frac{r(r-1)}{2} \sqrt{D} (\xi + \eta) = \left(  \left( \frac{r(r-1)}{2} \right)^2 D (\xi - \eta)^2 + r^2(r -1)^2 D \xi \eta \right)^{1/2},
		$$
	we conclude that all of the coefficients of  
$r (r-1) \sqrt{D} \xi$ and $r(r -1) \sqrt{D}\eta$ are in $\mathcal{O}.$
	\end{proof}
	 \begin{lemma}\label{122Diag}
	 We have
		   \begin{equation*}
		   \frac{u( x , y)}{u(1 , 0)}  , \frac{v( x , y)}{v(1 , 0)} \in \mathbb{Q}(\sqrt{D})[x , y].
		   \end{equation*}
	\end{lemma}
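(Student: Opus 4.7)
The plan is to reduce the statement to a simple membership check and then cash it in using the Gauss-type identities already recorded in Section \ref{pre}. First I would note that $u(1,0) = \alpha$ and $v(1,0) = \gamma$, which are nonzero under the standing convention $\alpha\gamma \neq 0$ adopted just before the lemma. Hence the ratios to be analyzed simplify to the linear forms
$$\frac{u(x,y)}{u(1,0)} = x + \frac{\beta}{\alpha}\,y, \qquad \frac{v(x,y)}{v(1,0)} = x + \frac{\delta}{\gamma}\,y,$$
and the lemma reduces to showing that $\beta/\alpha$ and $\delta/\gamma$ both lie in $\mathbb{Q}(\sqrt{D})$.

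To prove this membership, I would invoke the identities from \eqref{A,CinZ} and \eqref{Bd1}, namely $\alpha\gamma/\chi = A \in \mathbb{Z}$ together with $\beta\gamma/\chi = (B - d_1)/2$ and $\alpha\delta/\chi = (B + d_1)/2$, where $d_1 = j/\chi = \pm\sqrt{D}$. Forming the appropriate quotients eliminates $\chi$ and yields
$$\frac{\beta}{\alpha} \;=\; \frac{\beta\gamma/\chi}{\alpha\gamma/\chi} \;=\; \frac{B - d_1}{2A}, \qquad \frac{\delta}{\gamma} \;=\; \frac{\alpha\delta/\chi}{\alpha\gamma/\chi} \;=\; \frac{B + d_1}{2A},$$
both of which are manifestly elements of $\mathbb{Q}(\sqrt{D})$ since $A,B \in \mathbb{Z}$ and $d_1 \in \mathbb{Q}(\sqrt{D})$.

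The only point requiring genuine care is that the denominator $A$ is nonzero. This follows because $\chi \neq 0$ (otherwise $uv \equiv 0$, contradicting $j=\alpha\delta-\beta\gamma\neq 0$) combined with $\alpha\gamma \neq 0$, giving $A = \alpha\gamma/\chi \neq 0$. The main obstacle — if one can call it that — is purely bookkeeping: one must ensure that the standing assumption $\alpha\gamma \neq 0$ is legitimately available. That is precisely what the paragraph preceding Lemma \ref{Siegelshows} arranges, either automatically when $\mathbb{Q}(\sqrt{D}) \neq \mathbb{Q}$ (by conjugacy, neither $\alpha$ nor $\gamma$ can vanish) or by a harmless linear change of variable in the rational case.
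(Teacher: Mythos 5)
Your proof is correct, but it reaches the key fact $\beta/\alpha,\ \delta/\gamma \in \mathbb{Q}(\sqrt{D})$ by a different mechanism than the paper. The paper's proof is a one-line appeal to the discussion around \eqref{SiegelE22}: there the authors deduce from Lemma \ref{rationalcoeff} and \eqref{conjugates} that $\xi$ and $-\eta$ have coefficients in $\mathbb{Q}(\sqrt{D})$ (conjugate to one another when $\mathbb{Q}(\sqrt{D})\neq\mathbb{Q}$), and then read off $\beta_1=\beta/\alpha$ and $\delta_1=\delta/\gamma$ as ratios of coefficients of $\xi$ and $\eta$. You instead bypass Lemma \ref{rationalcoeff} entirely and compute the quantities in closed form from the quadratic-form data, obtaining $\beta/\alpha = (B-d_1)/(2A)$ and $\delta/\gamma = (B+d_1)/(2A)$ directly from \eqref{A,CinZ} and \eqref{Bd1}. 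This is more elementary (it needs only the definition of $\chi$ and $D$, not the Gauss result \eqref{LGauss} behind Lemma \ref{rationalcoeff}) and it yields explicit formulas; as a consistency check, your expressions give $\delta_1-\beta_1 = d_1/A = j/(\alpha\gamma)$, agreeing with the identity stated after \eqref{SiegelE22}. Your attention to $A\neq 0$ (via $\chi\neq 0$ and the standing convention $\alpha\gamma\neq 0$) is exactly the right point to check and is handled correctly; the paper's route needs the same convention for the same reason.
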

 \begin{proof}
We have $\frac{\beta}{\alpha}, \frac{\delta}{\gamma} \in \mathbb{Q}(\sqrt{D})$. Therefore
	 \begin{equation*}
	 \frac{u( x , y)}{u(1 , 0)}  , \frac{v(x , y)}{v(1 , 0)} \in \mathbb{Q}(\sqrt{D})[x , y].
	 \end{equation*}
 \end{proof}
	  \begin{lemma}\label{ai2Diag}
	 Let $(x_{1} , y_{1})$ and $(x_{2}, y_{2})$ be two pairs of rational integers. Then
		 $$
		\frac{2}{\chi} \, u_1 v_2 \in\mathcal{O}. 
		 $$
	Let $a$, $b$ be positive integers with $a+b=r$. Then
		  $$
		  {r\choose a} r(r-1) \sqrt{D} \, u_1^a u_2^b\ \textrm{ and }\ {r\choose a} r(r-1) \sqrt{D}\,  v_1^a v_2^b
		 $$
	   are in $\mathcal{O}$.
	  \end{lemma}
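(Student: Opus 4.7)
The plan is to treat the two assertions separately: the first follows by a direct expansion of a bilinear expression in $x_1,y_1,x_2,y_2$, while the second rests on combining Lemma \ref{Siegelshows} with a multinomial identity.

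For $\tfrac{2}{\chi} u_1 v_2 \in \mathcal{O}$, I would expand $(\alpha x_1 + \beta y_1)(\gamma x_2 + \delta y_2)/\chi$ and substitute the identities $\alpha\gamma/\chi = A$, $\beta\delta/\chi = C$, $\alpha\delta/\chi = (B+d_1)/2$ and $\beta\gamma/\chi = (B-d_1)/2$ supplied by \eqref{A,CinZ} and \eqref{Bd1}. After multiplying by $2$, the right-hand side regroups as
$$
2A x_1 x_2 + B(x_1 y_2 + y_1 x_2) + 2C y_1 y_2 + d_1(x_1 y_2 - y_1 x_2).
$$
Since $A,B,C$ and the $x_i,y_i$ are rational integers and $d_1 = \pm\sqrt{D}$ is an algebraic integer, this expression lies in $\mathcal{O}$.

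For $\binom{r}{a} r(r-1)\sqrt{D}\, u_1^a u_2^b$, the plan is first to write
$$
u_1^a u_2^b \;=\; \sum_{k=0}^{r} \alpha^k \beta^{r-k} \sum_{\substack{i+j=k\\ 0\le i\le a,\; 0\le j\le b}} \binom{a}{i}\binom{b}{j} x_1^i y_1^{a-i} x_2^j y_2^{b-j},
$$
and then to pair each inner summand with the factor $\binom{r}{a} r(r-1)\sqrt{D}$. The key combinatorial observation will be the identity
$$
\binom{r}{a}\binom{a}{i}\binom{b}{j} \;=\; \binom{r}{k}\binom{k}{i}\binom{r-k}{a-i}, \qquad k=i+j,
$$
both sides being equal to the multinomial coefficient $r!/(i!\,j!\,(a-i)!\,(b-j)!)$. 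With this in hand, each term of $\binom{r}{a} r(r-1)\sqrt{D}\, u_1^a u_2^b$ becomes a rational integer times $r(r-1)\sqrt{D}\binom{r}{k}\alpha^k\beta^{r-k}$, and the latter lies in $\mathcal{O}$ because it is a coefficient of the form $r(r-1)\sqrt{D}\,\xi = r(r-1)\sqrt{D}(\alpha x + \beta y)^r$ covered by Lemma \ref{Siegelshows}. The analogous statement for $v_1^a v_2^b$ follows by repeating the argument with $(\gamma,\delta)$ in place of $(\alpha,\beta)$, invoking Lemma \ref{Siegelshows} for the coefficients of $r(r-1)\sqrt{D}\,\eta$.

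The only nontrivial point will be the multinomial identity above: the naive observation that $\binom{r}{a}\binom{a}{i}\binom{b}{j}$ is a rational integer is not enough, since the building blocks produced by Lemma \ref{Siegelshows} carry a factor of $\binom{r}{k}$, not of $\binom{r}{a}$. Recognizing that one can trade $\binom{r}{a}$ for $\binom{r}{k}$ at the cost of only integer factors is exactly what the identity accomplishes, and once it is established the remainder of the argument is purely formal bookkeeping.
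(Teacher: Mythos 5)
Your proposal is correct and follows essentially the same route as the paper: the first assertion via the identities \eqref{A,CinZ} and \eqref{Bd1}, and the second via the identity $\binom{r}{a}\binom{a}{i}\binom{b}{j}=\binom{r}{k}\binom{k}{i}\binom{r-k}{a-i}$ (with $k=i+j$), which is exactly the rewriting the paper performs to reduce each coefficient to an integer multiple of $\binom{r}{k}r(r-1)\sqrt{D}\,\alpha^{k}\beta^{r-k}$ and then invoke Lemma \ref{Siegelshows}.
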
 
	   \begin{proof}
The first assertion follows easily	 
by \eqref{A,CinZ} and \eqref{Bd1}.
Since 
$$ r(r-1) \sqrt{D}u^{r} = r(r-1)\sqrt{D} \xi,$$
 from Lemma 
\ref{Siegelshows} we conclude that   \\
$ \begin{pmatrix} r\\ k \end{pmatrix} r(r-1)\sqrt{D}\alpha^{k} \beta^{r-k}$, 
for $k =0, \ldots, r$,  are all in   $\mathcal{O}$. 
The coefficients of $u_{1}^a u_{2}^b$  are of the form
$${a\choose i}{b\choose j}{\alpha^{i+j} \beta^{r-i-j}}$$
which is equal to
$$ \frac{{a\choose i}{b\choose j}}{{r\choose i+j}}{r\choose i+j}
{\alpha^{i+j} \beta^{r-i-j}}
= \frac{{i+j\choose i}{r-i-j\choose a-i}}{{r\choose a}} {r\choose i+j}
{\alpha^{i+j} \beta^{r-i-j}}.$$
Hence we obtain that  
${r\choose a} r(r-1) \sqrt{D} u_1^au_2^b$ is in $\mathcal{O}$.
We can similarly show that ${r\choose a} r(r-1)\sqrt{D}v_1^a v_2^b$ 
is also in $\mathcal{O}$.
	\end{proof}

\section{Reduced Forms}\label{EF}
We call two binary forms $F_{1}(x , y)$ and $F_{2}(x , y)$ equivalent if they are equivalent under $\textrm{GL}(2,\mathbb{Z})$ action, i.e., if there exists an integer matrix 
	\[
	\lambda=\begin{pmatrix} a & b \\ c & d \end{pmatrix}
	\]
such that $ad - bc = \pm 1$ and 
	$$
	F_{1}(ax + by, cx + dy) = F_{2} (x , y).
	$$
Then we write
	$$
	F_{2} = F_{1} \circ \lambda.
	$$
Notice that if
	$$
	F( x , y) = (\alpha x + \beta y)^r - (\gamma x + \delta y)^r
	$$
 then every equivalent form will be of the shape
	$$
	G (x , y) = \left( (a \alpha + c \beta) x + (b \alpha + d \beta) y\right)^r - \left((a \gamma + c \delta) x + (b \gamma + d \delta) y   \right)^r,
	$$
for some $\begin{pmatrix} a & b \\ c & d \end{pmatrix}\in$ $\textrm{GL}(2,\mathbb{Z})$. Thus a diagonalizable form remains diagonalizable under $\textrm{GL}(2,\mathbb{Z})$-action. 
Further, the number of primitive solutions of $0<|F_{1}(x , y)|\leq h$ remains unaltered for any equivalent form $F_{1}$ of $F$.

Recall that a definite quadratic form $Ax^2 + B xy + Cy^2$ is called reduced if $C \geq A \geq |B|$. 

\begin{defin} Let  $F(x , y) = (\alpha x + \beta y)^r - (\gamma x + \delta y)^r \in \mathbb{Z}[x , y]$ be a diagonalizable form with  
	\begin{equation}
(\alpha x + \beta y)  (\gamma x + \delta y) = \chi (Ax^2 + B xy+ Cy^2)
\end{equation}
and $A, B, C \in \mathbb{Z}$ and a constant $\chi$.  If 
$D = B^2 - 4 A C < 0$, we call $F(x, y)$ \textit{reduced} if the quadratic form $Ax^2 + B xy+ Cy^2$ is reduced.
\end{defin}

It is a well-known fact that every definite quadratic form is equivalent to a reduced form. Therefore, 
 if $D < 0$ then $F(x, y)$ is clearly equivalent to a reduced form.

\begin{lemma}\label{negativedisc}
	Let $F(x , y)$ be a reduced diagonalizable form  with $D<0$. If $(x,y)$ is a solution of $0 < |F(x , y)| \leq h$ with $y\neq 0$, then
		\begin{equation}\label{uvyEX}
		|u(x,y)| = |v(x,y)| \geq \frac{|\chi|^{1/2} |y| |3D|^{1/4}}{2}.
		\end{equation}
	\end{lemma}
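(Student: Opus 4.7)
The plan is to bound $|u(x,y)v(x,y)|$ from below using the factorization $uv=\chi(Ax^2+Bxy+Cy^2)$, and then invoke the observation made just after \eqref{SiegelE22} that $|u(x,y)|=|v(x,y)|$ whenever $D<0$. This reduces the desired inequality to showing that $|Ax^2+Bxy+Cy^2|\geq\tfrac{1}{4}|3D|^{1/2}y^2$ for every integer $y\neq 0$, because then
\begin{equation*}
|u(x,y)|^2=|u(x,y)||v(x,y)|=|\chi|\,|Ax^2+Bxy+Cy^2|\geq \frac{|\chi|\,|3D|^{1/2}}{4}y^2,
\end{equation*}
and taking square roots gives \eqref{uvyEX}.

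To obtain the quadratic-form bound I proceed in two steps. Since $D=B^2-4AC<0$, the integers $A$ and $C$ share a common nonzero sign; replacing $Ax^2+Bxy+Cy^2$ by its negative if necessary (which does not affect the absolute value), I may assume $A,C>0$. Completing the square I get
\begin{equation*}
Ax^2+Bxy+Cy^2=A\left(x+\frac{B}{2A}y\right)^2+\frac{|D|}{4A}y^2\geq \frac{|D|}{4A}y^2
\end{equation*}
for every integer $x$, where the right-hand side is strictly positive because $y\neq 0$.

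Now I invoke the reducedness hypothesis $C\geq A\geq |B|$: it forces $|D|=4AC-B^2\geq 4A^2-A^2=3A^2$, and hence $A\leq\sqrt{|D|/3}$. Substituting this into the lower bound above yields
\begin{equation*}
\frac{|D|}{4A}\geq \frac{|D|}{4\sqrt{|D|/3}}=\frac{|3D|^{1/2}}{4},
\end{equation*}
which is exactly the quadratic-form bound required to close the argument.

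There is no serious obstacle; the whole proof is a marriage of the classical lower bound for a reduced positive definite binary quadratic form with the symmetry $|u|=|v|$ that is peculiar to diagonalizable forms with negative $D$. The only delicate point is the sign case (negative-definite $Ax^2+Bxy+Cy^2$), which is disposed of by immediately passing to absolute values, so no difficulty is expected in carrying out the calculation.
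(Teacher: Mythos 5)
Your proof is correct and follows essentially the same route as the paper: both arguments reduce the claim to the classical lower bound $Ax^2+Bxy+Cy^2\geq \frac{\sqrt{-3D}}{4}y^2$ for a reduced definite form via the estimate $3A^2\leq 4AC-B^2$, and then pass to $|u|=|v|$ through \eqref{chiquad}. The only cosmetic difference is that you complete the square directly while the paper minimizes $At^2+Bt+C$ in $t=x/y$, and you make the sign normalization of $A,C$ explicit where the paper leaves it implicit.
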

\begin{proof}
We have
	$$
A x^2 + B xy + Cy^2 = y^2 (A t^2 + B t + C),
	$$
where $t = x/y$. Then the polynomial $A t^2 + B t + C$ assumes a minimum equal to 
$\frac{4AC - B^2}{4A}$ at $t = \frac{-B}{2A}$. Since $Ax^2 + B xy + Cy^2$ is reduced, we have $C \geq A \geq |B|$, and therefore
	$$
	4A^2 \leq 4 A C
	$$
and
	$$
	A^2 \geq B^2.
	$$
We conclude that 
	$$
	3A^2 \leq4AC - B^2
	$$
and
	$$
	A  \leq \frac{\sqrt{-D}}{\sqrt{3}}.
	$$
Therefore
	$$
	\frac{4AC - B^2}{4A} =\frac{-D}{4A}\geq\frac{\sqrt{-3D}}{4}.
	$$
This implies that 
	\begin{equation}\label{ABCy}
	A x^2 + B xy + Cy^2  \geq y^2  \frac{\sqrt{-3D}}{4}.
	\end{equation}
Thus if $D<0$, by \eqref{chiquad}, we have
	\begin{equation*}
	|u| = |v| \geq \frac{|\chi|^{1/2} |y| |3D|^{1/4}}{2}.
	\end{equation*}
\end{proof}
\section{Gap Principles}\label{GP}
 We define
	\begin{equation}\label{defofmu}
	\mu(x , y) : = \frac{\eta(x , y)}{\xi(x , y)}
	\end{equation}
so that
	\begin{equation}\label{muF}
	1 - \mu(x , y) = \frac{\xi(x , y)-\eta(x , y)}{\xi(x , y)} = \frac{F(x , y)}{\xi(x , y)}.
	\end{equation}
	Let $(x,y) \in \mathbb{Z}^2$ satisfy the inequality $0 < |F(x , y)| \leq h$. We define 
	 \begin{equation}\label{defofZ}
Z=Z(x,y): =\max{(|u(x , y)|,|v(x , y)|)}
	\end{equation}
and
	 \begin{equation}\label{defofzeta}
	\zeta = \zeta (x , y):=\frac{|F(x , y)|}{Z^r(x , y)}.
	\end{equation}
Then
	\begin{eqnarray}\label{SE31}
	\max (|\xi(x , y)| , |\eta(x , y)|)& = & |F(x , y)| \zeta^{-1}(x , y), \\ \nonumber
	 Z& = & |F(x , y)|^{1/r} \zeta(x , y)^{-1/r}.
	\end{eqnarray}
For brevity, we set
	\[
	\zeta_i=\zeta(x_i,y_i), \ \mu_i=\mu(x_i,y_i) \textrm{ and } Z_i=\max(|u(x_i,y_i)|,|v(x_i,y_i)|).
	\]
Write
$$F(x,y)=\xi(x , y)-\eta(x , y) = \prod_{k=1}^r (u(x , y)-v(x , y)e^{\frac{2\pi ik}{r}}).$$

\begin{defin}\label{defofrelated}
Let $\omega$ be an $r$-th root of unity. We say that $(x,y)$ is {\it
  related} to $\omega$ if 
$$|u(x , y)-v(x, y)\omega|=\min_{1 \leq k \leq r}|u(x , y)-v(x , y)e^{\frac{2\pi i k}{r}}|.$$
\end{defin}

\begin{defin}\label{defofS}
We denote by $S$ the set of all solutions of $0< |F(x , y| \leq h$ and by
$S_\omega$ the set of all solutions of $0< |F(x , y| \leq h$ that are related to $\omega$. 
\end{defin}

Clearly $S_\omega\subseteq S$ and $S = \cup S_{\omega}$, as $\omega$ ranges over all $r$-th roots of unity. 
	\begin{lemma}\label{Except1}
	Let $F(x , y)$ be a diagonalizable form and $(x_0,y_0)$ a
        solution to the inequality $0 < |F(x , y)| \leq h$ with the largest value $\zeta_{0}$ of
        $\zeta$, where $\zeta$ and $Z$  are defined in \eqref{defofzeta} and \eqref{defofZ}. Then for every integer pair $(x , y) \neq (x_{0}, y_{0})$ satisfying $0 < |F(x , y)| \leq h$, we have
	\begin{enumerate}[(i)]
	\item $Z(x , y)\geq\frac{|j|^{1/2}}{2^{1/2}h^{1/r}}.$
	\item $Z(x , y)\geq\frac{|j|}{2h^{1/r}}$ if $\zeta_0 \geq 1$.
	\item $\zeta(x , y)<2^{-\nu}$ if $\zeta_0 \geq 1$ and $|j|>2^{1+\nu/r}h^{2/r}$ for $\nu\in\mathbb{R}$.
	\end{enumerate}	
	\end{lemma}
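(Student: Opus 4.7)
The plan is to exploit a ``determinant identity'' relating any two distinct primitive solutions of the inequality. A direct bilinear computation gives
\[
u(x_0,y_0)\,v(x,y) - u(x,y)\,v(x_0,y_0) = j(x_0 y - x y_0),
\]
since the $\alpha\gamma x_0 x$ and $\beta\delta y_0 y$ contributions cancel and the remaining cross terms regroup exactly as $(\alpha\delta - \beta\gamma)(x_0 y - x y_0)$. The convention identifying $(x,y)$ with $(-x,-y)$, combined with primitivity of both pairs, forces $x_0 y - x y_0$ to be a \emph{nonzero} integer, so $|x_0 y - x y_0|\geq 1$ and hence $|u_0 v - u v_0|\geq |j|$. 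The triangle inequality $|j|\leq 2 Z_0 Z$ then yields the master estimate
\[
Z_0\, Z \geq \tfrac{1}{2}|j|.
\]

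For part (ii), the hypothesis $\zeta_0\geq 1$ together with $\zeta_0 = |F(x_0,y_0)|/Z_0^r$ and $|F(x_0,y_0)|\leq h$ forces $Z_0^r \leq h$, i.e.\ $Z_0\leq h^{1/r}$; inserting this into the master estimate produces $Z\geq |j|/(2h^{1/r})$, which is exactly (ii).

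For part (i), I would instead exploit the \emph{maximality} of $\zeta_0$. From $\zeta_0\geq\zeta(x,y)$ one obtains $Z_0^r/Z^r \leq |F(x_0,y_0)|/|F(x,y)|\leq h$, where the second bound uses $|F(x,y)|\geq 1$ (nonzero integer). Hence $Z_0\leq h^{1/r}Z$, and substituting into the master estimate yields $Z^2\geq |j|/(2h^{1/r})$. This implies the stated bound, and is in fact slightly stronger whenever $h>1$.

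Part (iii) follows immediately from (ii): using $\zeta(x,y)=|F(x,y)|/Z^r$ and $|F(x,y)|\leq h$,
\[
\zeta \;\leq\; \frac{h}{Z^r} \;\leq\; \frac{2^r h^2}{|j|^r},
\]
and the hypothesis $|j|>2^{1+\nu/r}h^{2/r}$ is equivalent to $|j|^r > 2^{r+\nu}h^2$, giving $\zeta<2^{-\nu}$. The only delicate point in the whole argument is verifying the determinant identity and observing that $x_0 y - x y_0 \neq 0$ for two distinct primitive solutions; once these are established, everything reduces to the triangle inequality and the definitions of $\zeta$ and $Z$.
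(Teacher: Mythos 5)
Your proof is correct and follows essentially the same route as the paper's: the identity $u_0v-uv_0=j(x_0y-xy_0)$ together with the triangle inequality gives the master estimate $|j|\leq 2Z_0Z$, and the three parts then follow from the definitions of $\zeta$ and $Z$, the bounds $1\leq|F|\leq h$, and the maximality of $\zeta_0$. Your arrangement of (i) in fact yields the marginally sharper bound $Z\geq |j|^{1/2}/(2^{1/2}h^{1/(2r)})$, and you are more explicit than the paper about why $x_0y-xy_0\neq 0$ (primitivity plus the identification of $(x,y)$ with $(-x,-y)$), but these are cosmetic refinements of the same argument.
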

\begin{proof}
Let $(x_1,y_1)\neq(x_0,y_0)$ be a solution of \eqref{firstineq}. Then
	$$
	u_{0} v_{1} - u_{1} v_{0} = (\alpha \delta -\beta \gamma) (x_{0} y_{1} - x_{1} y_{0}) = j (x_{0} y_{1} - x_{1} y_{0}) \neq 0
$$
by \eqref{integralcondition}.
We conclude that
	\begin{equation}\label{SE34}
	|j| \leq |u_{0} v_{1}| + |u_{1} v_{0}| \leq 2Z_0 Z_1.
	\end{equation}
	\begin{enumerate}[(i)]
	\item By \eqref{SE34} and \eqref{SE31}, we have
	\[
	|j|\leq 2Z_0 Z_1\leq 2h^{2/r}\zeta_0^{-1/r}\zeta_1^{-1/r}\leq 2h^{2/r}\zeta_1^{-2/r}.
	\]
	Hence
	\[
	Z_1^{-r}\leq \zeta_1\leq 2^{r/2}h|j|^{-r/2},
	\]
	proving the claim.
	\item From \eqref{SE34}, we get
	\[
	Z_1\geq \frac{|j|\zeta_0^{1/r}}{2|F_0|^{1/r}}\geq
\frac{|j|}{2h^{1/r}} \textrm{ if } \zeta_0\geq 1.
\]
	\item If $\zeta_0 \geq 1$, then
	\[
	|j|\leq 2Z_0 Z_1\leq 2h^{2/r}\zeta_1^{-1/r}.
	\]
Hence, if $|j|>2^{1+\nu/r}h^{2/r}$, we get
	\[
	\zeta_1\leq 2^rh^2|j|^{-r}<2^{-\nu}.
	\]
	\end{enumerate}
\end{proof}

\begin{defin}\label{defofx,y0}
We  denote the solution to the inequality $0 < |F(x, y)|\leq h$ for which 
$\zeta$ is the largest by $(x_0,y_0)$. We denote the largest value of $\zeta$ by $\zeta_{0}$.
\end{defin}

\bigskip

\noindent{\bf Remark.} By Lemma \ref{Except1}, if
$|j| >2h^{2/r}$  and the integer pair $(x , y) \neq (x_{0}, y_{0})$ satisfies $0 < |F(x , y)| \leq h$, then  
$\zeta(x , y) <1$.

\bigskip

The next three results are for forms
with $D>0.$ 
\begin{lemma}\label{definiteforms}
Let $F$  be a definite diagonalizable  form with $D>0.$
Then for any $(x,y)\in \mathbb R^2\setminus (0,0),$ we have 
$\zeta(x , y)\geq 1$, where the function $\zeta$ is defined in \eqref{defofzeta}.
\end{lemma}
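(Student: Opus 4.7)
\medskip
\noindent\textbf{Proof plan.} The goal is to show $|F(x,y)| \geq Z(x,y)^r = \max(|u|,|v|)^r$ for all $(x,y) \neq (0,0)$. Since $Z^r = \max(|u|^r,|v|^r) = \max(|\xi|,|\eta|)$, this amounts to the elementary inequality
\[
|\xi - \eta| \geq \max(|\xi|,|\eta|).
\]
If $\xi$ and $\eta$ are real and satisfy $\xi\eta \leq 0$, then $|\xi - \eta| = |\xi| + |\eta|$ and the inequality is immediate. So the plan reduces to proving two facts under the hypothesis ``$D>0$ and $F$ definite'': (a) $\xi(x,y)$ and $\eta(x,y)$ are real for real $(x,y)$, and (b) $\xi\eta \leq 0$ pointwise.

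For (a), note that $D>0$ makes $\sqrt{D}$ real; combining this with Lemma \ref{rationalcoeff} (so $Q = \sqrt{D}(\xi+\eta)$ has rational coefficients) and the identities \eqref{conjugates}, both $\xi = F/2 + Q/(2\sqrt{D})$ and $\eta = -F/2 + Q/(2\sqrt{D})$ are real-valued binary forms. Alternatively one may invoke \eqref{SiegelE22}: since $\xi,\eta$ have real coefficients, the realness of $\beta_1 = \beta/\alpha$ and $\delta_1 = \delta/\gamma$ (and then of $\alpha_1 = \alpha^r$, $\gamma_1 = \gamma^r$) follows, and $\xi = \alpha_1(x+\beta_1 y)^r$, $\eta = \gamma_1(x+\delta_1 y)^r$ are real.

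For (b), I first rule out $r$ odd: when $r$ is odd, $F(-x,-y) = -F(x,y)$, so $F$ takes both signs and cannot be definite. Hence $r$ must be even. With $r$ even and $\beta_1,\delta_1,\alpha_1,\gamma_1 \in \mathbb{R}$, the form $\xi = \alpha_1(x+\beta_1 y)^r$ has constant sign (the sign of $\alpha_1$), vanishing only on the real line $x+\beta_1 y = 0$; similarly $\eta$ has the sign of $\gamma_1$. Evaluating $F = \xi - \eta$ on the line $x = -\beta_1 y$ (where $\xi=0$) gives $F = -\gamma_1(\delta_1-\beta_1)^r$, and on the line $x = -\delta_1 y$ (where $\eta = 0$) gives $F = \alpha_1(\beta_1-\delta_1)^r$. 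For $F$ to have one constant sign on both lines (using that $(\delta_1-\beta_1)^r = (\beta_1-\delta_1)^r > 0$ for $r$ even and $\delta_1 \neq \beta_1$), we must have $\alpha_1$ and $-\gamma_1$ of the same sign, i.e.\ $\alpha_1 \gamma_1 < 0$. Consequently $\xi\eta = \alpha_1\gamma_1\,[(x+\beta_1 y)(x+\delta_1 y)]^r \leq 0$ for all $(x,y) \in \mathbb{R}^2$.

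Combining (a) and (b),
\[
|F(x,y)| = |\xi - \eta| = |\xi| + |\eta| \geq \max(|\xi|,|\eta|) = \max(|u|^r,|v|^r) = Z(x,y)^r,
\]
so $\zeta(x,y) = |F(x,y)|/Z(x,y)^r \geq 1$, as required. The only mildly delicate step is the sign analysis in the last paragraph; once one sees that $F$ definite forces $\alpha_1\gamma_1<0$, everything else is bookkeeping.
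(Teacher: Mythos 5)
Your proof is correct and takes essentially the same route as the paper's: both rule out odd $r$ by the sign of $F(-x,-y)$, evaluate $F$ on the two lines $x+\beta_1y=0$ and $x+\delta_1y=0$ to force $\alpha^r$ and $\gamma^r$ to have opposite signs, and then conclude $\xi\eta\le 0$, hence $|F|=|\xi|+|\eta|\ge\max(|\xi|,|\eta|)=Z^r$. The only difference is presentational: you argue directly from definiteness while the paper frames the sign step as a contradiction, and you spell out the realness of $\xi,\eta$ which the paper absorbs into the remark $\alpha^r,\gamma^r,\beta_1,\delta_1\in\mathbb{Q}(\sqrt{D})\subseteq\mathbb{R}$.
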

\begin{proof}
Since $F(x,y)=(-1)^rF(-x,-y),$ we conclude that the degree $r$ is even. Further,
since $j\neq 0,$ forms of the type $ax^r$ or $by^r$ with $a,b\in \mathbb Z$
are excluded.
It follows from the definition of $\zeta$ in \eqref{defofzeta} that if  $\alpha^r$ and $\gamma^r$ are of opposite signs, then  $\zeta(x , y) \geq 1$ for
every $(x,y)\in \mathbb{R}^2$. We claim that for every definite  diagonalizable  form $F(x , y)$, $\alpha^r$ and $\gamma^r$  have opposite  signs. 
Assume, in contrary, that  $\alpha^r$ and $\gamma^r$  are either both positive or both negative. Recall that (see \eqref{SiegelE22} and its following lines)
$$F(x,y)=\alpha^r(x+\beta_1y)^r-\gamma^r(x+\delta_1y)^r$$
with $\alpha \gamma \neq 0$ and $\alpha^r,\gamma^r,\beta_1,\delta_1 \in
\mathbb Q(\sqrt{D}) \subseteq \mathbb{R}$.  Then we have
$$
F(-\beta_{1} , 1) = -\gamma^r(-\beta_{1}+\delta_1)^r
$$
and 
$$
F(-\delta_{1}, 1) = \alpha^r(-\delta_{1}+\beta_1)^r.
$$
Since $-\beta_{1}+\delta_1 \in \mathbb{R}$ and $r$ is even, we conclude that $F(-\beta_{1} , 1)$  and 
$F(-\delta_{1}, 1)$ have opposite signs, which is a contradiction with the form $F$ being definite. 
\end{proof}
As a direct consequence of Lemmas \ref{Except1}(iii) and
\ref{definiteforms} we get
\begin{cor}\label{definite} 
Let $F(x , y)$ be a definite  diagonalizable  form with
  $D>0$ and $|j|> 2h^{2/r}.$
Then the inequality $0 < |F(x , y)| \leq h$ has at most one solution.
\end{cor}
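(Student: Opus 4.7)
The plan is to derive the corollary by combining the two preceding results in the natural way suggested by the text. Suppose, for contradiction, that there are at least two primitive solutions of $0 < |F(x,y)| \leq h$. Let $(x_0, y_0)$ be the solution with the largest value $\zeta_0$ of $\zeta$, as in Definition~\ref{defofx,y0}, and let $(x_1, y_1)$ be any other primitive solution.

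First, I would apply Lemma~\ref{definiteforms} to the pair $(x_0, y_0)$. Since $F$ is a definite diagonalizable form with $D > 0$, the lemma gives $\zeta_0 = \zeta(x_0, y_0) \geq 1$, which puts us in position to invoke the last item of Lemma~\ref{Except1}. Next I would apply Lemma~\ref{Except1}(iii) with the choice $\nu = 0$: the hypothesis $|j| > 2h^{2/r}$ is exactly $|j| > 2^{1+\nu/r} h^{2/r}$ for $\nu = 0$, and combined with $\zeta_0 \geq 1$ this yields the strict bound
\[
\zeta(x_1, y_1) < 2^{-\nu} = 1.
\]
Finally, Lemma~\ref{definiteforms} applied to $(x_1, y_1) \in \mathbb{R}^2 \setminus \{(0,0)\}$ gives $\zeta(x_1, y_1) \geq 1$, contradicting the strict inequality above. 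Hence no second solution can exist.

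There is no real obstacle here; the only subtlety is matching the hypothesis $|j| > 2h^{2/r}$ to the borderline case $\nu = 0$ of Lemma~\ref{Except1}(iii), so that the strict inequality $\zeta < 2^{-\nu} = 1$ is incompatible with the universal lower bound $\zeta \geq 1$ supplied by Lemma~\ref{definiteforms}. This is precisely why the corollary is stated as a direct consequence of the two lemmas.
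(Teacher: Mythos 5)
Your proof is correct and is precisely the argument the paper intends: it states the corollary as "a direct consequence of Lemmas \ref{Except1}(iii) and \ref{definiteforms}" without further detail, and your use of Lemma \ref{definiteforms} to get $\zeta_0\geq 1$ (and $\zeta\geq 1$ for any putative second solution) combined with Lemma \ref{Except1}(iii) at $\nu=0$ fills in exactly those details.
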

\begin{lemma}\label{positiveDforms}
Let $F(x , y)$ be a diagonalizable  form with
  $D>0$. Then all solutions of $0 < |F(x , y)| \leq h$ with  $\zeta<1$
are related to one or two $r$-th roots of unity when $r$ is odd
or $r$ even, respectively.
\end{lemma}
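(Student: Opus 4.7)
My plan is to exploit the real-valuedness of $\xi=u^{r}$ and $\eta=v^{r}$ when $D>0$ in order to constrain the argument of the complex ratio $v/u$ to a two-element set, and then intersect this constraint with the $r$-th roots of unity.

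First, since $D>0$, $\sqrt{D}\in\mathbb{R}$, so by \eqref{conjugates} and Lemma~\ref{rationalcoeff} both $\xi=\tfrac{F}{2}+\tfrac{Q}{2\sqrt{D}}$ and $\eta=-\tfrac{F}{2}+\tfrac{Q}{2\sqrt{D}}$ are real at every integer point $(x,y)$. For a solution with $\zeta<1$, after possibly swapping the roles of $\xi$ and $\eta$ (which corresponds to swapping $u,v$ and does not affect the count of related roots), assume $|\xi|\geq|\eta|$. Then $|F|=|\xi-\eta|<|\xi|$ forces the two real numbers $\xi$ and $\eta$ to share their sign; in particular both are nonzero, and $\mu=\eta/\xi=(v/u)^{r}$ is a positive real number. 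Equivalently, $\arg(v/u)\equiv 2\pi k/r\pmod{2\pi}$ for some $k\in\{0,1,\ldots,r-1\}$.

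Next, I use the factorization \eqref{SiegelE22}: $u=\alpha(x+\beta_{1}y)$ and $v=\gamma(x+\delta_{1}y)$. When $D>0$, both $\beta_{1}$ and $\delta_{1}$ lie in $\mathbb{Q}(\sqrt{D})\subseteq\mathbb{R}$, so $p:=x+\beta_{1}y$ and $q:=x+\delta_{1}y$ are real; they are nonzero since $u\neq 0$ and $v\neq 0$. Writing $v/u=(\gamma/\alpha)(q/p)$ with $q/p\in\mathbb{R}\setminus\{0\}$ and $\theta_{0}:=\arg(\gamma/\alpha)$ fixed once the form is chosen, the argument of $v/u$ lies in $\{\theta_{0},\,\theta_{0}+\pi\}\pmod{2\pi}$, determined by the sign of $q/p$.

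Combining the two constraints, admissible arguments lie in $\{\theta_{0},\theta_{0}+\pi\}\cap\{2\pi k/r:0\leq k<r\}$ modulo $2\pi$. Since $\pi\equiv 2\pi k/r\pmod{2\pi}$ happens only when $r$ is even (with $k=r/2$), this intersection has at most one element when $r$ is odd and at most two when $r$ is even. A routine calculation shows that $|u-v\,e^{2\pi i\ell/r}|$ is uniquely minimized over $\ell$ at $\ell\equiv -k\pmod{r}$, giving the unique related root $\omega=e^{-2\pi ik/r}$. Thus across all solutions with $\zeta<1$, at most one $\omega$ occurs when $r$ is odd and at most two when $r$ is even. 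The only delicate point in this plan is the second step, namely recognizing that the diagonalizable structure pins $\arg(v/u)$ down to a two-element set independent of $(x,y)$; once that is seen, everything else is bookkeeping.
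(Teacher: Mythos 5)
Your proof is correct and follows essentially the same route as the paper's: both arguments rest on the observation that, because $\beta_{1},\delta_{1}\in\mathbb{Q}(\sqrt{D})\subseteq\mathbb{R}$, the ratio $v/u$ is a nonzero real multiple of the fixed constant $\gamma/\alpha$, so its argument is confined to a two-element set independent of the solution, while $\zeta<1$ forces $\xi$ and $\eta$ to share a sign. The paper packages this by writing $u/v=\omega R(x,y)$ with $\omega$ a fixed $r$-th root of unity and $R$ real and then locating the nearest root of unity, whereas you derive the root-of-unity constraint on $\arg(v/u)$ from the positivity of $\mu$ and intersect the two constraints; the difference is only cosmetic.
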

\begin{proof}
Let
$u=\alpha(x+\beta_1y)$ and $v=\gamma(x+\delta_1y)$ so that
$$F(x , y) = u^r(x , y) - v^r(x, y),$$
where $\alpha \gamma \neq 0$ and $\alpha^r, \gamma^r, \beta_1,\delta_1 \in \mathbb
Q(\sqrt{D})$. 
It follows from the definition of $\zeta$ in \eqref{defofzeta} that if  $r$ is even and $\alpha^r$ and $\gamma^r$ are of opposite signs, then  $\zeta(x , y) \geq 1$ for
every $(x,y)\in \mathbb{R}^2$ (in this case $F(x,y)$ will be a definite form, see Lemma \ref{definiteforms} and its proof). Therefore we will assume that either $r$ is odd or the real numbers $\alpha^r$ and $\gamma^r$  have the same sign. Without loss of generality, let us assume that $\alpha^r$ and $\gamma^r$  are both positive if $r$ is even (otherwise we can replace the form $F(x , y)$ by $-F(x , y)$). Let $\alpha'$ and $\gamma'$ be fixed real $r$-th roots of $\alpha^r$ and $\gamma^r$, respectively. Then we have $\frac{\alpha}{\gamma} = \omega \frac{\alpha'}{\gamma'}$ for a fixed $r$-th root of unity $\omega$. Therefore, for $(x , y) \in \mathbb{R}^2$,
$$
\frac{u(x , y)}{v(x , y)} =  \omega R(x , y),
$$
with $R(x , y) \in \mathbb{R}$.
We claim that all the solutions of  $0 < |F(x , y)| \leq h$ with  $\zeta<1$ are related to $\omega$ if $r$ is odd and all the solutions of  $0 < |F(x , y)| \leq h$ with  $\zeta<1$ are related to $\omega$  or $-\omega$ if $r$ is even.

Suppose  $(x , y) \in \mathbb{Z}^2$ satisfies $0 < |F(x , y)| \leq h$ and $\zeta(x , y) < 1$. Therefore, by \eqref{defofzeta}, we have $u(x , y) \neq 0$ and $v(x , y) \neq 0$.
Let $\omega_1$ be an $r$-th root of unity.
We have
\begin{equation}\label{uvomega}
\left|\frac{u(x , y)}{v(x , y)}-\omega_1\right| = \left|\frac{u(x , y)}{v(x , y)}\omega^{-1}-
\omega_1 \omega^{-1}\right|.
\end{equation}
Since $\frac{u(x , y)}{v(x , y)}\omega^{-1} \in \mathbb{R}$, we conclude that 
$$\left|\frac{u(x , y)}{v(x , y)}\omega^{-1}-
\omega_1 \omega^{-1}\right|
\geq 
\left|\frac{u(x , y)}{v(x , y)}\omega^{-1}-1\right| = \left|\frac{u(x , y)}{v(x , y)}-\omega\right|$$
if $\frac{u(x , y)}{v(x , y)}\omega^{-1} > 0$,
and 
$$\left|\frac{u(x , y)}{v(x , y)}\omega^{-1}-
\omega_1 \omega^{-1}\right|
\geq 
\left|\frac{u(x , y)}{v(x , y)}\omega^{-1}+1\right| =  \left|\frac{u(x , y)}{v(x , y)}+\omega\right|$$
if $\frac{u(x , y)}{v(x , y)}\omega^{-1} <0$. Therefore by \eqref{uvomega} and Definition \ref{defofrelated}, we conclude that  the solution $(x , y)$ is related to $\omega$ if $r$ is odd and $(x , y)$ is related to $\omega$ or $-\omega$ if $r$ is even. Notice that if $\zeta(x , y) < 1$, then  the real numbers $u^r(x , y)$ and $v^r(x , y)$ have the same sign and therefore when $r$ is odd, the real number $\frac{u(x , y)}{v(x , y)}\omega^{-1} > 0$.
\end{proof}
	\begin{lemma}\label{6.12}
Suppose $F$ is a diagonalizable binary form of degree $r$.	
Let $(x,y)$ be a solution of $0 < |F(x , y)| \leq h$
related to a fixed $r$-th root of unity, say $\omega$. Then 
	 \begin{equation}\label{Gap12}
		\left|\omega  - \frac{u(x , y)}{v(x , y)}\right| \leq \frac{\pi}{2r} \zeta(x , y)\ if D<0.
		\end{equation}
	If further $\zeta < 1$ and $D<0,$ then 
		\begin{equation}\label{Gap22}
		\left|\omega - \frac{u(x , y)}{v(x , y)}\right| < \frac{\pi}{3r} \zeta(x , y).
		\end{equation}
Suppose $D>0$ and $\zeta<1.$ Then 
	\begin{equation}\label{Gap23}
		\left|\omega - \frac{u(x , y)}{v(x , y)}\right| \leq \frac{Z(x , y)}{|v(x , y)|}\zeta(x , y),
		\end{equation}
		where
$Z(x,y)$ and $\zeta(x , y)$ are defined in \eqref{defofZ} and 
\eqref{defofzeta}, respectively.
	\end{lemma}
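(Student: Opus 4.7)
The plan is to base the proof on the factorization
\[
F(x,y) \;=\; v^{r}\bigl((u/v)^{r} - 1\bigr),
\]
together with the fact that $\omega$ is the closest $r$-th root of unity to $u/v$. The quantity $\zeta = |F|/Z^{r}$ will then admit an explicit trigonometric or algebraic representation in each regime $D<0$ and $D>0$, and each inequality will follow from an elementary estimate for $|(u/v)^{r}-1|$.

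For $D<0$, I would use the fact recorded in Section \ref{pre} that $|u|=|v|$, so $u/v = e^{i\psi}$ lies on the unit circle and $Z=|v|$. Writing $\omega = e^{2\pi i k_{0}/r}$ and $\theta = \psi - 2\pi k_{0}/r$, the ``related'' hypothesis forces $|\theta|\leq \pi/r$. Then $|u/v-\omega| = 2|\sin(\theta/2)|\leq |\theta|$, while $\zeta = |(u/v)^{r}-1| = 2|\sin(r\theta/2)|$. Since $|r\theta/2|\leq \pi/2$, the Jordan-type bound $\sin t\geq (2/\pi)\,t$ on $[0,\pi/2]$ gives $\zeta \geq (2r/\pi)|\theta|$ and hence \eqref{Gap12}. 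Under the extra assumption $\zeta<1$, one obtains $|\sin(r\theta/2)|<1/2$, forcing $|r\theta/2|<\pi/6$; on this smaller interval the monotonicity of $\sin t / t$ yields the strict improvement $\sin t > (3/\pi)\, t$ for $t\in(0,\pi/6)$, which produces $|\theta| < \pi\zeta/(3r)$ and therefore \eqref{Gap22}.

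For $D>0$ with $\zeta<1$, Lemma \ref{positiveDforms} gives $u/v = \omega s$ with $s\in\mathbb{R}$; the lemma's same-sign argument on $u^{r},v^{r}$ together with the convention ``related to $\omega$'' lets me assume $s>0$ (for odd $r$ this is provided directly; for even $r$ one may need to replace $\omega$ by $-\omega$, which is harmless). Then $|u/v-\omega|=|s-1|$, $|F|/|v|^{r}=|s^{r}-1|$, and $Z/|v|=\max(s,1)$, so that \eqref{Gap23} reduces to the purely algebraic inequality
\[
|s-1| \;\leq\; \frac{|s^{r}-1|}{\max(s,1)^{\,r-1}}.
\]
Splitting into $s\geq 1$ and $0<s<1$ and applying the factorization $s^{r}-1 = (s-1)(s^{r-1}+\cdots+s+1)$ verifies both cases immediately: for $s\geq 1$ one rewrites the right side as $s - s^{-(r-1)}$, and for $0<s<1$ it reduces to $s^{r}\leq s$.

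The main piece of bookkeeping is justifying $s>0$ in the $D>0$ case so that $|u/v-\omega|$ really equals $|s-1|$; this is handled by invoking Lemma \ref{positiveDforms} rather than by any new argument. Once that is in hand, the three bounds are each one-line estimates (two trigonometric, one polynomial), and I do not anticipate any deeper obstacle.
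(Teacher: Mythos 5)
Your proposal is correct and takes essentially the same route as the paper's proof: in the $D<0$ case the paper's quantity $\sqrt{2-2\cos(r\theta)}$ is exactly your $2|\sin(r\theta/2)|$ and its ``differential calculus'' bounds are the Jordan-type estimates you invoke, while in the $D>0$ case your parameter $s$ is the paper's $\mu^{-1/r}$ and your split $s\geq 1$ versus $0<s<1$ matches the paper's split $|\eta|<|\xi|$ versus $|\xi|<|\eta|$. No gaps.
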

	\begin{proof}
	Let $D<0$. Let
		\[
		\theta=\textrm{arg} \left( \frac{u(x , y)}{\omega v(x , y)}\right),
		\]
		so that $-\pi < \theta \leq \pi$.
	Since 
		\[
		\left|\frac{u(x , y)}{v(x , y)}\right|=1
		\]
	and $(x,y)$ is related to $\omega$, we have
	 $$
	 r \theta = \textrm{arg} \left( \frac{u^r(x , y)}{v^r(x , y)}\right) =  
\textrm{arg} \left( \frac{\xi(x , y)}{\eta(x, y)}\right).
	  $$
Thus
	$$|\theta| \leq \frac{\pi}{r}. $$
Also
	  $$
	  \sqrt{2 -2\cos(r\theta)} = \zeta(x , y)
	  $$
gives
$$|\theta| < \frac{\pi}{3r},$$
whenever $\zeta<1.$
Further since 
	 $$ 
	\left|\omega - \frac{u(x , y)}{v(x , y)} \right| \leq  |\theta|, 
	 $$
	we obtain
	$$
	\left|\omega - \frac{u(x , y)}{v(x , y)}\right| \leq \frac{1}{r} 
\frac{|r\theta|}{\sqrt{2 -2\cos(r\theta)}} 
	\left|1 -\frac{u^r(x , y)}{v^r(x , y)}\right|. 
	$$
	By differential calculus 
$\frac{|r\theta|}{\sqrt{2 - 2\cos(r\theta)}} \leq \frac{\pi}{2}$ 
whenever $ 0<|\theta| \leq \frac{\pi}{r} $. Therefore
	 $$
	 \left|\omega - \frac{u(x , y)}{v(x , y)}\right| \leq\frac{\pi}{2 r} \zeta(x , y),
	 $$
	 and from the fact that $\frac{|r\theta|}{\sqrt{2 - 2\cos(r\theta)}} 
< \frac{\pi}{3}$ whenever $ 0<|\theta| < \frac{\pi}{3 r} $ , we conclude
	 $$
	 \left|\omega - \frac{u(x , y)}{v(x , y)}\right| <\frac{\pi}{3 r} \zeta(x , y),
	 $$
	as desired. 	
	
Now assume that  $D>0.$ Let $(x , y)$ be a solution to the inequality $0<|F(x,y)|\leq h$ with $\zeta<1$, and $\mu^{-1/r}$ be the positive real $r$-th root of $\mu^{-1} = \frac{\xi(x , y)}{\eta(x , y)} $. Then 
	\[
	u/v=\mu^{-1/r}\omega
	\]
for some $r$-th root of unity $\omega$.
Let $|v| \geq |u|.$ Then
$$\left|\frac{u}{v}-\omega\right|=|\mu^{-1/r}-1|=1-\mu^{-1/r}\leq 1-\mu^{-1}=\zeta.$$	
Suppose $|v|<|u|.$ Then
\begin{eqnarray*}
&&\left|\frac{u}{v}-\omega\right|=\mu^{-1/r}-1\\
&=&\mu^{-1/r}(1-\mu^{1/r})\leq\mu^{-1/r}(1-\mu)=\left|\frac{u}{v}\right|\left(1-\frac{v^r}{u^r}\right)=\frac{Z}{|v|}\zeta.
\end{eqnarray*}
\end{proof}

Let $\omega$ be a fixed $r$-th root of unity. As before, we will denote the set of all solutions of the inequality $0 < |F(x , y)| \leq h$  which are related to $\omega$ by $S_{\omega}$. Assume  $S_{\omega} \neq \emptyset$ and let

	\[
	\zeta'=\max\limits_{(x,y)\in S_{\omega}} \zeta(x,y)
	\]
and  $(x',y')$ the solution in $S_{\omega}$ with $\zeta(x',y')=\zeta'$. We define
\begin{equation}\label{defofS'}
S'_{\omega}: =S_{\omega} \setminus \{(x', y')\}.
\end{equation}

 \begin{defin} \label{defofR(k)} Let $k$ be a positive integer. We define
$$R(k): =(r-1)^{k-1}.$$\end{defin}
\begin{lemma}\label{43}
Let $S'_\omega$ be given by \eqref{defofS'}. Assume that  $|S'_\omega| \geq 2.$ Then 
	\[
Z(x,y)\geq\frac{|j|}{2h^{1/r}} \textrm{ for all } (x,y)\in S'_\omega.
	\]
Moreover,  if  $(x_{j_1},y_{j_1})$, $\ldots$, $(x_{j_t},y_{j_t})\in S_\omega$ with $t \geq 3$, 
$\zeta_{j_t}\leq\ldots\leq\zeta_{j_1} < 1$, and 
	\[
	|j|>2^{1+(r-2)/(r(R(t-1)-1))}h^{2/r},
	\]
then $\zeta_{j_{t-1}}<1/2$.
\end{lemma}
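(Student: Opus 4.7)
The plan is to work inside $S_\omega$ and upgrade the general gap $|j|\leq 2Z_aZ_b$ of Lemma~\ref{Except1} by exploiting Lemma~\ref{6.12}. Starting from the identity $u_av_b-u_bv_a=j(x_ay_b-x_by_a)$ and the rewrite
$$u_av_b-u_bv_a=(u_a-v_a\omega)v_b-(u_b-v_b\omega)v_a,$$
Lemma~\ref{6.12} yields $|u-v\omega|\leq Z\zeta$ for every solution in $S_\omega$ (immediate in the $D<0$ regime since $\pi/(2r)\leq 1$, and valid in the $D>0$ regime whenever $\zeta<1$); combining with $|v|\leq Z$ and $|x_ay_b-x_by_a|\geq 1$ produces the sharpened gap
$$|j|\leq Z_aZ_b(\zeta_a+\zeta_b)$$
valid for any two distinct solutions in $S_\omega$, the restriction $\zeta_a,\zeta_b<1$ being needed only in the $D>0$ case.

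For the first assertion, fix $(x,y)\in S'_\omega$ and pair it with $(x',y')$. If $\zeta'\geq 1$, then $\zeta_0\geq 1$, and $(x,y)\neq(x_0,y_0)$ because $(x_0,y_0)$ either lies outside $S_\omega$ or (having maximal $\zeta$ among all solutions) coincides with $(x',y')$; Lemma~\ref{Except1}(ii) then delivers $Z(x,y)\geq|j|/(2h^{1/r})$ at once. If $\zeta'<1$, every $\zeta$ in $S_\omega$ is below $1$, the sharpened gap applies, and $\zeta(x,y)\leq\zeta'$ together with $Z'\leq h^{1/r}(\zeta')^{-1/r}$ gives
$$|j|\leq 2Z(x,y)\,Z'\zeta'\leq 2Z(x,y)\,h^{1/r}(\zeta')^{(r-1)/r}<2Z(x,y)\,h^{1/r},$$
which is exactly the desired inequality.

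For the second assertion, apply the sharpened gap to consecutive pairs $(x_{j_i},y_{j_i})$ and $(x_{j_{i+1}},y_{j_{i+1}})$, use $\zeta_{j_i}+\zeta_{j_{i+1}}\leq 2\zeta_{j_i}$, and substitute $Z_{j_k}\leq h^{1/r}\zeta_{j_k}^{-1/r}$ to extract the recursion
$$\zeta_{j_{i+1}}\leq K\,\zeta_{j_i}^{r-1},\qquad K:=\frac{2^rh^2}{|j|^r}.$$
The hypothesis on $|j|$ rearranges to $K<2^{-(r-2)/(R(t-1)-1)}<1$. A routine induction, whose base $\zeta_{j_2}<K$ comes from $\zeta_{j_1}<1$, gives $\zeta_{j_i}<K^{e_i}$ where $e_{i+1}=1+(r-1)e_i$ with $e_2=1$; solving this linear recurrence yields the closed form $e_i=(R(i)-1)/(r-2)$. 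Setting $i=t-1$ produces
$$\zeta_{j_{t-1}}<K^{(R(t-1)-1)/(r-2)}<2^{-1}=\frac{1}{2}.$$

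The principal delicate step is uniformizing both sign regimes of $D$ into the single clean inequality $|j|\leq Z_aZ_b(\zeta_a+\zeta_b)$ via Lemma~\ref{6.12}; once this is secured, the first claim is a short dichotomy on whether $\zeta'\geq 1$, and the second is a routine solution of a linear recurrence. The only thing requiring care is the bookkeeping of strict versus non-strict inequalities so that the final conclusion $\zeta_{j_{t-1}}<1/2$ is genuinely strict.
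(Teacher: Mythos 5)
Your proposal is correct and follows essentially the same route as the paper: the same dichotomy on whether the extremal $\zeta$ is at least $1$ (handled via Lemma \ref{Except1}(ii)) versus the sharpened gap $|j|\leq 2Z_aZ_b\zeta_a$ obtained from Lemma \ref{6.12}, and the same recursion $\zeta_{j_{i+1}}\leq H^{r}\zeta_{j_i}^{r-1}$ with exponent $(R(i)-1)/(r-2)$ for the second claim. The only cosmetic difference is that you phrase the key estimate as $|u-v\omega|\leq Z\zeta$ rather than dividing through by $v$, and you condition on $\zeta'$ instead of $\zeta_0$; both choices are equivalent to the paper's argument.
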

\begin{proof}
Let $(x_{0}, y_{0})$ be as in Definition \ref{defofx,y0}. If $\zeta_{0}\geq 1$, then  the first part of the result follows from Lemma
\ref{Except1}(ii). Assume that $\zeta_{0}< 1$. Then $\zeta(x , y)<1$ for every $(x , y) \in S_{w}$. 
Let $(x_{i_0},y_{i_0})$, $(x_{i_1},y_{i_1})$ $\in S_\omega$ with $\zeta_{i_0}\geq\zeta_{i_1}$.
We have
	$$
	u_{i_0} v_{i_1} - u_{i_1} v_{i_0} = (\alpha \delta -\beta \gamma) (x_{i_0} y_{i_1} - x_{i_1} y_{i_0}) = j (x_{i_0} y_{i_1} - x_{i_1} y_{i_0}) \neq 0.
	$$
Using Lemma \ref{6.12}, we obtain
	\begin{eqnarray}
\nonumber	|j|&\leq& |v_{i_0}v_{i_1}|\left|\frac{u_{i_0}}{v_{i_0}}-\frac{u_{i_1}}{v_{i_1}}\right|\leq|v_{i_0}v_{i_1}|\left(\left|\frac{u_{i_0}}{v_{i_0}}-\omega\right|+\left|\frac{u_{i_1}}{v_{i_1}}-\omega\right|\right)\\
\nonumber	&\leq& |v_{i_0}v_{i_1}|\left(\frac{Z_{i_0}\zeta_{i_0}}{|v_{i_0}|}+\frac{Z_{i_1}\zeta_{i_1}}{|v_{i_1}|}\right)\leq 2Z_{i_0}Z_{i_1}\zeta_{i_0}.
	\end{eqnarray}
Hence
	\[
	|j|\leq 2Z_{i_0}Z_{i_1}\zeta_{i_0}\leq 2h^{1/r}\zeta_{i_0}^{(r-1)/r}Z_{i_1} 
\leq 2h^{1/r}Z_{i_1},
	\]
proving the first part of the lemma. 

Now we assume  $(x_{j_1},y_{j_1})$, $\ldots$, $(x_{j_t},y_{j_t})\in S_\omega$, with $\zeta_{j_t}\leq\ldots\leq\zeta_{j_1} <1$. Since
	\[
	|j|\leq  2h^{1/r}\zeta_{j_1}^{(r-1)/r}Z_{j_2}\leq
        2h^{2/r}\zeta_{j_1}^{(r-1)/r}
\zeta_{j_2}^{-1/r},
	\]
we get
	\[
	\zeta_{j_2}\leq H^r \zeta_{j_1}^{r-1},
	\]
where
	\[
	H=2h^{2/r}|j|^{-1}.
	\]
Proceeding inductively, we obtain that
	\[
	\zeta_{j_{t-1}}\leq H^{r(R(t-1)-1)/(r-2)}\zeta_{j_1}^{R(t-1)}<H^{r(R(t-1)-1)/(r-2)}.
	\]
Thus $\zeta_{j_{t-1}}<1/2$ if 
	\[
	|j|^{r(R(t-1)-1)/(r-2)}>2^{1+r(R(t-1)-1)/(r-2)} h^{2(R(t-1)-1)/(r-2)}.
	\]
\end{proof}

Let $\omega$ be a fixed $r$-th root of unity.
 If  $0 \neq k=|S'_{\omega}|$, throughout this manuscript, we will index 
the elements  $(x_{1}, y_{1}), \ldots, (x_{k}, y_{k})$ of  $S'_{\omega}$  such that 
$\zeta_{i+1}\leq \zeta_{i}$ for $i=1, \ldots,  k-1$. The following result of  Siegel in  \cite[p. 154]{Sie3} provides an important gap principle. 
	\begin{lemma}\label{gap_Siegel}
Assume that $|S'_\omega| \geq 2.$
Let $i\geq 2$ be an integer. If $\zeta_{i-1}<1$,
then
		\[
		Z_i\geq \frac{|j|}{2h}Z_{i-1}^{r-1}.
		\]
	\end{lemma}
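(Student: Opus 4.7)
The plan is to run a standard gap-principle argument of the Thue--Siegel type: two primitive solutions in $S_\omega$ produce two good approximations to the common root of unity $\omega$, and the two approximations cannot be closer than the determinant identity allows.

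First, I would set up the determinant identity. Since $(x_{i-1},y_{i-1})$ and $(x_i,y_i)$ are distinct primitive integer solutions in $S'_\omega$, the vectors $(x_{i-1},y_{i-1})$ and $(x_i,y_i)$ are not proportional (our convention identifies $(x,y)$ with $(-x,-y)$), so $x_{i-1}y_i - x_i y_{i-1}$ is a nonzero integer. Using $u = \alpha x + \beta y$ and $v = \gamma x + \delta y$, a direct computation gives
\[
u_{i-1} v_i - u_i v_{i-1} = j(x_{i-1} y_i - x_i y_{i-1}),
\]
and so $|j| \leq |u_{i-1} v_i - u_i v_{i-1}|$.

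Next, I would apply Lemma~\ref{6.12} to each of $(x_{i-1},y_{i-1})$ and $(x_i,y_i)$. By the indexing convention, $\zeta_i \leq \zeta_{i-1} < 1$, so both solutions satisfy $\zeta < 1$. When $D > 0$ the bound $|\omega - u/v| \leq (Z/|v|)\zeta$ is immediate from \eqref{Gap23}. When $D < 0$ one has $|u| = |v| = Z$, so $Z/|v| = 1$, and the stronger bound $|\omega - u/v| < (\pi/3r)\zeta$ from \eqref{Gap22} also gives $|\omega - u/v| \leq (Z/|v|)\zeta$. Hence in both cases
\[
\left|\omega - \frac{u_j}{v_j}\right| \leq \frac{Z_j}{|v_j|}\zeta_j \qquad (j = i-1, i).
\]

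Now I would combine these. Factoring $u_{i-1}v_i - u_i v_{i-1} = v_{i-1}v_i\bigl(u_{i-1}/v_{i-1} - u_i/v_i\bigr)$ and applying the triangle inequality around $\omega$, I get
\[
|j| \leq |v_{i-1}v_i|\left(\left|\frac{u_{i-1}}{v_{i-1}}-\omega\right| + \left|\omega - \frac{u_i}{v_i}\right|\right) \leq |v_i|Z_{i-1}\zeta_{i-1} + |v_{i-1}|Z_i\zeta_i.
\]
Using $|v_j| \leq Z_j$ and $\zeta_i \leq \zeta_{i-1}$, this simplifies to
\[
|j| \leq 2 Z_{i-1} Z_i \zeta_{i-1}.
\]

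Finally I would insert the bound $\zeta_{i-1} = |F(x_{i-1},y_{i-1})|/Z_{i-1}^r \leq h/Z_{i-1}^r$ from \eqref{defofzeta} and the hypothesis $|F| \leq h$, yielding
\[
|j| \leq 2 Z_{i-1} Z_i \cdot \frac{h}{Z_{i-1}^r} = \frac{2h Z_i}{Z_{i-1}^{r-1}},
\]
which rearranges to the desired inequality $Z_i \geq \frac{|j|}{2h} Z_{i-1}^{r-1}$. There is no real obstacle here; the only subtlety is ensuring that the triangle-inequality bound $|\omega - u/v| \leq (Z/|v|)\zeta$ can be used uniformly for $D<0$ and $D>0$, which is why the hypothesis $\zeta_{i-1}<1$ (and hence $\zeta_i<1$) is needed to invoke the $\zeta<1$ parts of Lemma~\ref{6.12}.
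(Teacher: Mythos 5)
Your proof is correct. The $D<0$ case is exactly the paper's argument: the determinant identity $u_{i-1}v_i-u_iv_{i-1}=j(x_{i-1}y_i-x_iy_{i-1})\neq 0$, the triangle inequality around $\omega$, the bound from \eqref{Gap22}, and $\zeta_{i-1}\leq h/Z_{i-1}^{r-1+1}\cdot Z_{i-1}$, i.e.\ $\zeta_{i-1}\leq h Z_{i-1}^{-r}$. Where you diverge is the $D>0$ case: you invoke \eqref{Gap23} of Lemma~\ref{6.12} so that both signs of $D$ are handled by the single uniform estimate $|\omega-u/v|\leq (Z/|v|)\zeta$, whereas the paper's proof of this lemma re-derives the needed bound from scratch for $D>0$ by a case analysis on whether $|\eta_i|<|\xi_i|$ or $|\xi_i|<|\eta_i|$, working with the positive real $r$-th roots $\mu^{1/r}$ and the identity $e(u_{i-1}v_i-u_iv_{i-1})=u_{i-1}u_i\bigl((1-\mu_{i-1}^{1/r})-(1-\mu_i^{1/r})\bigr)$. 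Your shortcut is legitimate — it is precisely the route the paper itself takes in the closely related Lemma~\ref{43}, where $|j|\leq 2Z_{i_0}Z_{i_1}\zeta_{i_0}$ is obtained from \eqref{Gap23} for solutions related to the same $\omega$ — and it buys a shorter, unified proof; the paper's longer $D>0$ computation buys nothing extra for this statement. The only points worth making explicit, which you handle correctly, are that $\zeta_i\leq\zeta_{i-1}<1$ forces $v_{i-1}v_i\neq 0$ so the factorization $u_{i-1}v_i-u_iv_{i-1}=v_{i-1}v_i(u_{i-1}/v_{i-1}-u_i/v_i)$ is permitted, and that the $\omega$ appearing in \eqref{Gap23} is the root of unity to which the solution is related, so the same $\omega$ serves both solutions in $S'_\omega$.
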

	\begin{proof}
	First we assume $D<0$. By \eqref{Gap22} and \eqref{SE31}, we have
	\begin{eqnarray*}
	|j| &\leq& |u_{i-1} v_{i} - u_{i} v_{i-1}| \leq |v_{i-1} v_{i}| 
\left( \left|\frac{u_{i-1}}{v_{i-1}}- \omega\right| + \left| \frac{u_{i}}{v_i}-\omega\right| \right)\\ &\leq& Z_{i-1}Z_i (\zeta_{i-1} + \zeta_{i})
	\leq 2Z_{i-1}Z_i \zeta_{i-1} \leq \frac{2hZ_i}{Z_{i-1}^{r-1}},
	\end{eqnarray*}
		which proves the lemma for $D<0$. 

Next we assume $D>0$. Then $\xi$ and $\eta$ are real. Suppose that  $\zeta < 1$. Then 
	$$
	\max(|\xi| , |\eta|) > |\xi - \eta| \textrm{ and  hence}\ \xi \eta > 0.
	$$
	We also have
	$$
	0 < \mu < 1 \, \, \textrm{if}\, \, |\eta| < |\xi|
	$$
	and
	$$
	0 < \mu^{-1} < 1 \, \,  \textrm{if}\, \, |\eta| > |\xi|,
	$$
	which gives either $\zeta = |1 - \mu|$ or $\zeta = |1 - \mu^{-1}|$.

	We may suppose without loss of generality, that $|\eta_{i-1}| < |\xi_{i-1}|$. The other case is similar. Then
	$$
	0 < \mu_{i-1} <1
	$$
	 and
	 $$
	 \mu_{i-1} < \mu_{i-1} ^{1/r} < 1
	 $$
	 where  $\mu_{i-1} ^{1/r}$ is the positive real $r$-th root of $\mu_{i-1}$.  
If $|\eta_{i}| < |\xi_{i}|$, we have
	 $$
	 0 < 1- \mu_{i}^{1/r} <1-\mu_i= \zeta_{i} \leq \zeta_{i-1}.
	 $$ 
	 This implies
	 $$
	 \left| (\mu_{i-1}^{1/r} -1) - (\mu_{i}^{1/r} -1)   \right| <2 \zeta_{i-1}.
	 $$
	Further,
	$$
	u_{i-1} v_{i} - u_{i} v_{i-1} = (\alpha \delta -\beta \gamma) (x_{i-1} y_{i} - x_{i} y_{i-1}) = j (x_{i-1} y_{i} - x_{i} y_{i-1}) \neq 0
	$$
	and  since every solution is related to a 
fixed root of unity, there exists an $r$-th root of unity, say $e$, such that
	\begin{eqnarray}
\nonumber	e (u_{i-1} v_{i} - u_{i} v_{i-1}) &=& u_{i-1} u_i \left( (1-\mu_{i-1}^{1/r} ) -
(1-\mu^{1/r}_{i})\right).
\end{eqnarray}
Hence
	\begin{eqnarray}\label{SE40}
|u_{i-1} v_{i} - u_{i} v_{i-1}|&\leq&2Z_{i-1}Z_i \zeta_{i-1}\leq \frac{2hZ_i}{Z_{i-1}^{r-1}}.
	\end{eqnarray}
	If, however, $|\xi_{i}| < |\eta_{i}|$, then we have
	$$
	0 < 1 - \mu_{i}^{-1} = \zeta_{i}, \, \, \,  0 < \mu_{i-1} \mu_{i}^{-1} < \mu_{i-1}^{1/r} \mu_{i}^{-1/r} < 1
	$$
	and 
	\begin{eqnarray*}
	0 & <& 1 - \mu_{i-1}^{1/r} \mu_{i}^{-1/r} <1-\mu_{i-1}\mu_i^{-1}\\
	&= & 1- (1-\zeta_{i-1}) (1 - \zeta_{i}) = \zeta_{i-1} + \zeta_{i} - \zeta_{i-1} \zeta_{i} < 2 \zeta_{i-1}.
	\end{eqnarray*}
	Since
	$$
	u_{i-1} v_{i}-v_{i-1}u_i = u_{i-1} v_{i}(1 - \mu_{i-1}^{1/r} \mu_{i}^{-1/r}),
	$$
	the lemma follows in this case as well.
	\end{proof}
\section{ Pad\'e Approximation}\label{Pade}
If
	$$
	P(z)=a_0z^d+a_1z^{d-1}+\ldots+a_d
	$$ 
is a polynomial of degree $d,$ define 
	$$
	P^{*}(x , y) =a_dx^d+\ldots+a_1xy^{d-1}+a_0y^d= x^{d} P(y/x).
	$$
A {\em hypergeometric function} is a power series of the form
	\begin{eqnarray*}
	&&F(\alpha , \beta , \gamma , z) =   \\
	&&1 + \sum_{k=1}^{\infty} \frac{\alpha (\alpha + 1) \cdots (\alpha + k -1) \beta (\beta + 1) \cdots (\beta + k - 1)}{\gamma (\gamma + 1) \cdots (\gamma + k - 1) k!} z^{k}.
	\end{eqnarray*}
Here $z$ is a complex variable and $\alpha$, $\beta$  and $\gamma$ are complex constants. If $\alpha$ or $\beta$ is a non-positive integer  and $m$ is the smallest integer such that 
	$$
	\alpha    (\alpha + 1) \cdots (\alpha + m) \beta (\beta + 1) \cdots (\beta + m) = 0,
	$$
then $F(\alpha , \beta , \gamma , z) $ is a polynomial in $z$ of degree $m$. Furthermore, if $\gamma$  is a non-positive integer, we will assume that at least one of $\alpha$ and $\beta$ is also a non-positive integer, greater than $\gamma$.

We note that $F( \alpha , \beta , \gamma , z)$ converges for $|z| < 1$.  By a result of Gauss, if $\alpha$, $\beta$  and $\gamma$ are real with $\gamma > \alpha + \beta $ and $\gamma$, $ \gamma - \alpha$ and $\gamma - \beta$ are not non-positive integers, then $F(\alpha , \beta , \gamma , z)$ converges for $z = 1$ and we have
	\begin{equation}\label{Gam}
	F(\alpha , \beta , \gamma , 1) = \frac{\Gamma(\gamma) \Gamma(\gamma - \alpha - \beta)}{\Gamma(\gamma - \alpha) \Gamma(\gamma - \beta)}.
	\end{equation} 
The hypergeometric function $F( \alpha , \beta , \gamma , z)$ satisfies the following second order differential equation 
	 \begin{equation} \label{hip}
	 z (1 - z) \frac{d^{2}F}{dz^{2}} + (\gamma - (1 + \alpha + \beta ) z )\frac{dF}{dz} - \alpha \beta F = 0.
	 \end{equation}

The following lemma gives the Pad\'e approximation to $(1-z)^{1/r}$ by 
hypergeometric polynomials and some properties of the 
approximating polynomials.
	\begin{lemma}\label{hyp}
	Let $n$ be a positive integer and $g \in \{ 0 , 1 \}$. Put
	\begin{eqnarray}\label{AB}\nonumber
	A_{n, g} (z) & = &  \sum_{m =0}^{n}{n - g + \frac{1}{r} \choose m} {2n - g - m \choose n - g}   (-z)^{m},   \\  
	B_{n, g} (z) & = & \sum_{m =0}^{n-g}{n - \frac{1}{r} \choose m} {2n - g - m \choose n }   (-z)^{m}.
	 \end{eqnarray}
	  \flushleft
	 \begin{itemize}
	 \item[(i)] There exists a power series $F_{n,g}(z)$ such that for all complex numbers $z$ with $|z| < 1$
	 \begin{equation}\label{ABF}
	 A_{n,g}(z) - (1 - z)^{1/r} B_{n, g}(z) = z^{2n+1 -g}F_{n,g}(z)
	 \end{equation}
	  and 
	  \begin{equation}\label{F}
	 |F_{n,g}(z)| \leq \frac{{n-g+1/r \choose n+1-g} {n- 1/r \choose n}}{{2n + 1 - g \choose n}} (1 - |z|)^{-\frac{1}{2}(2n + 1 - g)}.
	 \end{equation}
	 \item[(ii)] For all complex numbers $z$ with $|1 - z| \leq 1$ we have 
	 \[
	 |A_{n,g}(z)| \leq {2n - g \choose n}.
	 \]
	Further, if $|1 - z| \leq2$, then
	 \begin{equation}\label{A}
	 |A_{n,g}(z)| \leq 2^{3n+2}.
	 \end{equation}
 \item[(iii)] For all complex numbers $z \neq 0$ and for $I \in \{0 , 1\}$ we have
	 \begin{equation}\label{BA}
	 A_{n, 0}(z) B_{n+I , 1}(z) \neq A_{n+I , 1}(z) B_{n , 0}(z).
	 \end{equation} 
         \item[(iv)] For all $\lambda \in \mathcal{O}$ and 
         $c \in \mathbb{Z}$ we have
$$
A^{*}_{n,g}(\lambda, r^2 \sqrt{D} c) \in \mathcal{O}
$$
and
  $$
  B^{*}_{n,g}(\lambda, r^2 \sqrt{D} c) \in \mathcal{O}.
  $$
	 \end{itemize}
	\end{lemma}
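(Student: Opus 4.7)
The plan rests on the classical interpretation of $A_{n,g}$ and $B_{n,g}$ as (essentially) the hypergeometric Pad\'e approximants to $(1-z)^{1/r}$, together with Euler's integral representation for the remainder. For part (i) I would set
\[
F_{n,g}(z):=z^{-(2n+1-g)}\bigl(A_{n,g}(z)-(1-z)^{1/r}B_{n,g}(z)\bigr);
\]
analyticity at $z=0$ is exactly the Pad\'e vanishing property (order $2n+1-g$), verified by recognizing the two polynomials as normalized hypergeometric functions $F(-n+g,-n-1/r,-2n+g,z)$-type objects. I would then produce the Euler integral
\[
F_{n,g}(z)=C_{n,g}\int_{0}^{1}t^{n-1/r}(1-t)^{n-g+1/r-1}(1-tz)^{-n-1/r}\,dt
\]
and evaluate the normalizing Beta-function constant $C_{n,g}$ so that it agrees with the binomial prefactor of \eqref{F}. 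Pulling absolute values through the integral and using $|1-tz|\geq 1-|z|$ on $t\in[0,1]$ then yields \eqref{F}, since the remaining integral is again the Beta-function constant.

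For part (ii) on the disk $|1-z|\leq 1$ I would use a parallel Euler representation for $A_{n,g}$ itself whose integrand is bounded by $1$ on that disk, the normalizing integral giving $\binom{2n-g}{n-g}=\binom{2n-g}{n}$. On the larger disk $|1-z|\leq 2$ (so $|z|\leq 3$) I would use a coarse term-by-term estimate, bounding the coefficients of $A_{n,g}$ by elementary binomial inequalities and collecting geometric sums to obtain the claimed $2^{3n+2}$. Part (iii) is an elimination argument: multiplying the Pad\'e identity for index $(n,0)$ by $B_{n+I,1}$ and that for $(n+I,1)$ by $B_{n,0}$ and subtracting cancels the $(1-z)^{1/r}$ terms, giving
\[
A_{n,0}B_{n+I,1}-A_{n+I,1}B_{n,0}=z^{2n+1}F_{n,0}B_{n+I,1}-z^{2n+2I}F_{n+I,1}B_{n,0}.
\]
The left-hand side has degree at most $2n+I$ while the right-hand side is divisible by $z^{2n+I}$, so the left-hand side must equal $cz^{2n+I}$ for some constant $c$; a short calculation of the leading (or lowest) coefficient via \eqref{AB} identifies $c$ as a nonzero product of binomial factors, yielding \eqref{BA} for every $z\neq 0$.

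For part (iv) it suffices to show each coefficient $r^{2m}\binom{n-g+1/r}{m}\binom{2n-g-m}{n-g}$ is a rational integer, after which $\lambda^{n-m}(\sqrt{D}\,c)^{m}\in\mathcal{O}$ (because $\sqrt{D}$ is a root of $x^{2}-D$ and $\lambda,c\in\mathcal{O}$) finishes the proof. I argue prime-by-prime. For $p\nmid r$: $1/r\in\mathbb{Z}_{p}$, hence $\binom{n-g+1/r}{m}\in\mathbb{Z}_{p}$ by $p$-adic continuity of $x\mapsto\binom{x}{m}$ on $\mathbb{Z}_{p}$, and the remaining factors are trivially $p$-integral. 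For $p\mid r$ I rewrite
\[
r^{2m}\binom{n-g+1/r}{m}=r^{m}\cdot\frac{\prod_{j=0}^{m-1}\bigl(r(n-g-j)+1\bigr)}{m!};
\]
each factor of the numerator is a $p$-adic unit (being $\equiv 1\pmod{p}$), so the $p$-adic valuation equals $mv_{p}(r)-v_{p}(m!)\geq m-(m-s_{p}(m))/(p-1)>0$, handling this case too. The main technical obstacle is likely in part (i): matching the Beta-function constant to the exact prefactor in \eqref{F} requires coordinating the $g$-shift in the indices and the placement of $1/r$ versus $-1/r$ in the Pochhammer symbols — once the correct integral representation is in hand, the remaining parts reduce to bookkeeping.
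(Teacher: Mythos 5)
Your parts (iii) and (iv) are sound. In (iii) the elimination argument is exactly the paper's, and your suggestion to read off the coefficient of $z^{2n+I}$ is if anything cleaner than the paper's evaluation at $z=1$: since $\deg(A_{n,0}B_{n+I,1})=2n+I-1<2n+I=\deg(A_{n+I,1}B_{n,0})$, the constant is the single product $\binom{n+I-1+1/r}{n+I}\binom{n-1/r}{n}$ up to sign, visibly nonzero. In (iv) your two-pronged $p$-adic argument ($p$-adic continuity of $x\mapsto\binom{x}{m}$ for $p\nmid r$; the units $r(n-g-j)+1$ and the bound $v_p(m!)<m$ for $p\mid r$) is equivalent to the paper's congruence count and is complete.

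The genuine gap is in part (i), specifically inequality \eqref{F}. First, the integrand you write corresponds to $F(n+\tfrac1r,\,n+1-\tfrac1r,\,2n+1-g,\,z)$, whereas the actual remainder factor is $G_{n,g}(z)=F(n+1-g,\,n+1-\tfrac1r,\,2n+2-g,\,z)$ (this is forced by the indicial roots of the hypergeometric ODE with $\gamma=-2n+g$), so the parameters must be corrected. More seriously, once corrected, Euler's representation carries $(1-tz)^{-a}$ with $a=n+1-g$ (or $n+1-\tfrac1r$ after swapping $a\leftrightarrow b$), and pulling $|1-tz|\ge 1-|z|$ through the integral yields $(1-|z|)^{-(n+1-g)}$. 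For $g=0$ this is $(1-|z|)^{-(n+1)}$, which is strictly weaker than the claimed $(1-|z|)^{-\frac12(2n+1)}=(1-|z|)^{-(n+\frac12)}$; no choice of which upper parameter to place in the kernel closes this half-power. The paper obtains the exponent $\tfrac12(2n+1-g)$ not from an integral but from a term-by-term ratio comparison: writing $G_{n,g}=\sum g_mz^m$, one checks $g_{m+1}/g_m\le (n+\tfrac12-\tfrac g2+m)/(m+1)$, so $|G_{n,g}(z)|\le\sum_m\binom{-n-\frac12+\frac g2}{m}(-|z|)^m=(1-|z|)^{-\frac12(2n+1-g)}$. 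You would need this (or an equivalent coefficient-level argument) to prove \eqref{F} as stated; the weaker integral bound would still feed through the paper's later applications, but it does not prove the lemma. A smaller soft spot is part (ii): Euler's integral does not literally apply to $A_{n,g}$ itself (its hypergeometric parameters $a,b$ are negative and $c=-2n+g$ is a non-positive integer), and the paper instead proves $A_{n,g}(z)=C_{n,g}(1-z)$ with $C_{n,g}$ having positive coefficients, whence $|A_{n,g}(z)|\le C_{n,g}(|1-z|)\le C_{n,g}(1)=\binom{2n-g}{n}$ and $C_{n,g}(2)\le 2^{3n+2}$; your term-by-term fallback for $|1-z|\le 2$ is fine, but the first bound needs the $C_{n,g}$ identity or something equivalent rather than an integral with ``integrand bounded by $1$.''
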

\begin{proof}
We first prove (ii).
Put
	$$
	C_{n,g}(z) = \sum_{m =0}^{n} {n - 1/r \choose n - m }{n- g + 1/r \choose m} z^{m} 
	$$
	and 
	$$
	D_{n,g} = \sum_{m = 0}^{n - g} {n - 1/r \choose m} {n - g + 1/r \choose n - g - m} z^{m}.
	$$
Note that, in terms of hypergeometric functions,
	$$
	A_{n, g}(z) = {2n - g \choose n} F (-1/r - n+ g , -n , -2n + g , z),
	$$ 
	$$
	B_{n, g}(z) = {2n - g \choose n - g} F (1/r - n  , -n+ g , -2n + g , z), 
	$$ 
	$$
	C_{n, g}(z) = {n - 1/r \choose n} F (-1/r - n + g , -n, 1- \frac{1}{r}, z)
	$$
and 
	$$
	D_{n, g}(z) = {n- g + 1/r \choose n- g} F (1/r- n  ,  -n + g , 1+ \frac{1}{r},z).
	$$ 
We show below that
	$$
	C_{n,g}(z) = A_{n,g}(1 - z)  , \  D_{n , g}(z) = B_{n,g}(1 - z).
	$$
The power series $ F(z) = \sum_{m =0}^{\infty} a_{m} z^{m}$ is a solution to the differential equation (\ref{hip}) 
precisely when
	\begin{equation}\label{an}
	(k + 1) (\gamma + k) a_{k + 1}  = (\alpha + k) ( \beta + k ) a_{k}   \  \textrm{for}  \  k = 0, 1 , 2 , \ldots  .
	\end{equation}
Therefore if $\gamma>0$, all the coefficients of $F(z)$ are determined by $a_0$. Hence the solution space of (\ref{hip}) is one-dimensional. Both $A_{n,g}(1 - z) $ and $C_{n,g}(z)$ satisfy  (\ref{hip}) with $\alpha = -1/r - n + g$, $\beta = -n$, $\gamma = 1 -\frac{1}{r}$. Hence they are linearly dependent. On equating the coefficients of $z^{n}$ in 
	$$
	(1 + z) ^{2n - g} = (1 + z)^{n - 1/r} (1 + z)^{n - g + 1/r}, 
	$$
we find that
	\begin{displaymath}\nonumber
	C_{n,g} (1) =  \sum_{m = 0}^{n} {n - 1/r \choose n -m }{n - g + 1/r \choose m} = {2n - g \choose n} = A_{n,g}(0),
	\end{displaymath}
and hence $C_{n,g}(z) = A_{n,g}(1 - z)$. Similarly, $D_{n,g}(z) = B_{n,g}(1 - z)$.
One can easily observe that $C_{n, g}(z)$  has positive coefficients. Hence when $|1 - z| \leq 1$,
	$$
	|A_{n,g}(z)| = |C_{n,g}(1 - z)| 
\leq C_{n,g}(1) = A_{n,g}(0) = {2n - g \choose n}.
	$$
Similarly, if $|1 - z| \leq 2$, we have
	$$
	|A_{n,g}(z)| = |C_{n,g}(1 - z)| 
\leq C_{n,g}(2)\leq 2^{3n+2}. 
	$$
This proves part (ii) of our lemma.

Next we prove (\ref{ABF}).  Define
	  $$
	  G_{n,g}(z) = F(n+1-g , n+1 - \frac{1}{r} , 2n +2 -g , z)
	  $$ 
 and notice that, for $|z| < 1$, the functions $A_{n,g}(z)$, $(1 - z)^{1/r} B_{n,g}(z)$ and $z^{2n+1 -g}G_{n,g}(z)$ satisfy (\ref{hip}) with $\alpha = -1/r - n + g$ , $\beta = -n$, $\gamma = -2n + g$.
 Suppose
	  \begin{displaymath} 
	  G_{n,g}(z) = \sum_{m =0}^{\infty} g_{m} z^{m}.
	  \end{displaymath}
We have $g_{0} = 1$ and, for $m \geq 0$,
	 $$
	  \frac{g_{m+1}}{g_{m}}  =   \frac{(n + 1- g + m)(n + 1-\frac{1}{r} +m)}{(m + 1)(2n+ 2 - g + m)}$$
	  $$\leq  \frac{n + 1/2 - g/2 + m}{m + 1} = \frac {(-1)^{m+1} {-n -1/2 + g/2 \choose m+1 }}{(-1)^{m}{-n - 1/2 + g/2 \choose m}}.
	$$
Therefore,
	  $$
	  |G_{n,g}(z)| \leq \sum_{m = 0}^{\infty} {-n -1/2 + g/2 \choose m} (-|z|)^{m} = (1 - |z|)^{-\frac{1}{2}(2n+1-g)}.
	  $$
Since $n \geq 1$ and $g \in \{0 , 1 \}$, $\gamma = -2n + g$ is a negative integer. By (\ref{an}), if $F(z) = \sum_{m= 0}^{\infty} a_{m} z^{m}$ is a solution to (\ref{hip}), then since $a_{0}$ and $a_{2n - g+1}$  may vary independently, the solution space of (\ref{hip}) is two-dimensional. Therefore, there are constants $c_{1}$, $c_{2}$ and $c_{3}$, not all zero, such that 
	 $$
	 c_{1}A_{n,g}(z) + c_{2} (1 - z) ^{1/r}B_{n,g}(z) + c_{3} z^{2n + 1 - g} G_{n,g}(z) = 0.
	 $$
 Letting $z = 0$, since $A_{n,g}(0) = B_{n,g}(0) \neq 0 $, we find that $c_{1} = - c_{2} \neq 0$. We may thus assume $c_{1} = 1$. Substituting $z=1$ in the above identity we get 
 $c_{3} = -\frac{A_{n,g}(1)}{G_{n,g}(1)}$, whence we may take
	 $$
	 F_{n,g}(z) = A_{n,g}(1) G_{n,g}(1)^{-1} G_{n,g}(z)
	 $$ 
to obtain \eqref{ABF}.
 In order to complete the proof of part (i), note that,  by (\ref{Gam}), we have
	 \begin{eqnarray*}
	& &  A_{n,g}(1) G_{n,g}(1)^{-1}  = {n - 1/r \choose n} \frac{\Gamma(n +1) \Gamma(n + 1 + \frac{1}{r} -g)}{\Gamma(2n + 2 -g) \Gamma(1/r)}\\
	 & =& \frac{ {n - 1/r \choose n} {n - g + 1/r \choose n + 1 - g}}{{2n+1-g \choose n}}.
	\end{eqnarray*}
Hence \eqref{F} holds.
    Now we prove (iii). By (\ref{ABF}),
   $$
   A_{n,0}(z) B_{n+ I , 1}(z) - A_{n+I , 1}(z)B_{n,0}(z) = z^{2n+I}P_{n,I}(z),
   $$
   where $P_{n,I}(z)$ is a power series. However, the left hand side of the above identity is a polynomial of degree at most $2n + I$, and so $P_{n,I}$ must be a constant.  Letting $z = 1$, we obtain that 
   	\[
   	P_{n,I}(1)={n - 1/r \choose n}{n +I+1/r-1 \choose n+I-1}-{n +I-1/r \choose n+I}{n + 1/r \choose n}\neq 0.
   	\]
Therefore, $$A_{n,0}(z) B_{n+I , 1}(z) - A_{n+I , 1}(z)B_{n,0}(z) = 0$$ if 
and only if $z = 0$.

Finally we prove (iv). From the definition of $A_{n,g}$ and $B_{n,g}$
it suffices to prove that
$$
{a/r \choose m} (r^2 \sqrt{D})^m \in \mathcal{O}
$$
for every $a , m \in \mathbb{Z}$ with $m \geq 0$. This  clearly holds
for $m =0$. Let $m \geq 1$. The assertion follows if we show that for
each prime $p$ the number
$$
t(m): = {a/r \choose m} (r)^{2m}=a(a-r)\ldots (a-r(m-1))r^m/m!
$$
 is a $p$-adic  integer. Observe that  
	\begin{equation}\label{order}
	\textrm{ord}_p(m!)=\sum_{j=1}^{\infty}\left[ \frac{m}{p^j}\right] < \frac{m}{p-1} \leq m
	\end{equation}
where $\textrm{ord}_p(n)$ denotes the exponent with which $p$ divides an
integer $n.$
If $p | r$ then we notice that $t(m)$ is a $p$-adic integer.
If $p\nmid r,$ from the solvability of the congruence 
	\[
	a-rx \equiv 0 \pmod{p^j},
	\]
we get
	$$
	\textrm{ord}_p(a(a-r)\ldots (a-r(m-1)) \geq \sum_{j=1}^{\infty}\left[
	  \frac{m}{p^j}\right]
	$$
and the assertion follows from \eqref{order}.
\end{proof}
\section{Construction of some Algebraic Numbers}\label{AN}
Let $F(x , y)$ be a diagonalizable form given in \eqref{form} and $D$ be the discriminant of the associated  quadratic form $Ax ^2 + B xy + Cy^2$ given in \eqref{chiquad}. We will work with the quadratic number field $\mathbb{Q}(\sqrt{D})$ and its ring of integers $\mathcal{O}$. 
We use the approximating polynomials $A_{n,g}(z)$ and $B_{n,g}(z)$ in 
Section \ref{Pade} to construct some complex sequences.
Suppose that $(x_1,y_1)$ and $(x_2,y_2)$ are two distinct solutions to the
inequality $0 < |F(x , y)| \leq h$ that are related to a fixed root of unity. Let $Z$ be the function defined in \eqref{defofZ}. We define
$$
(X_1,Y_1)=\begin{cases}
     (u_1,v_1)\ {\rm if}\ Z_1=|u_1|\\
     (v_1,u_1)\ {\rm otherwise}
    \end{cases}
$$
and
$$
(X_2,Y_2)=\begin{cases}
     (u_2,v_2)\ {\rm if}\ (X_1,Y_1)=(u_1,v_1)\\
     (v_2,u_2)\ {\rm otherwise}.
    \end{cases}
$$

If $X_1Y_1$, $X_2Y_2\neq0$, define 
$$
	 \Sigma_{n,g} = \frac{Y_{2}}{X_{2}}A_{n,g}(z_{1}) - 
\frac{Y_{1}}{X_{1}} B_{n,g}(z_{1})
	 $$
with  $z_{1} = 1 - Y_{1}^r/ X_{1}^r$ and 
$$
 \tilde{\Sigma}_{n,g} = \frac{X_{2}}{Y_{2}}A_{n,g}(\tilde{z_{1}}) -
\frac{X_{1}}{Y_{1}} B_{n,g}(\tilde{z_{1}})
 $$
with $\tilde{z_{1}} = 1-X_{1}^r/Y_{1}^r$.
Further let
\begin{equation}\label{cng}
c_{n,g}=
 r^n \left(r (r-1)
  \sqrt{D}\right)^{n+g}\left(\frac{2}{\chi}\right)^{1-g},
\end{equation}
	$$
	\Lambda_{n,g} = c_{n,g} 
X_{1}^{rn+1-g}X_{2} \Sigma_{n,g}\ \textrm{ and }\ \tilde{\Lambda}_{n,g} = c_{n,g}
Y_{1}^{rn+1-g}Y_{2} \tilde{\Sigma}_{n,g}.
	$$
We will show that  $\Lambda_{n,g}$ is either an integer in 
$\mathbb{Q}(\sqrt{D})$ or an $r$-th root of such an integer. The same assertion holds for $\tilde\Lambda_{n,g}.$  If $\Sigma_{n,g} \neq 0,$ we can get a lower bound for
 $|\Lambda_{n,g}\tilde\Lambda_{n,g}|$. We shall use Lemma \ref{hyp} to get 
an upper bound also.
As a direct consequence of Lemmas \ref{Siegelshows} and \ref{hyp} (iv), 
we get the following result.
	\begin{lemma}\label{24}
	For any pair of integers $(x , y)$ satisfying 
$0<|F(x,y)|\leq h$, we have 
	$$A^{*}_{n,g}( r^2 (r-1) \sqrt{D}X_1^{r},  
r^2 (r-1) \sqrt{D}X_1^{r} - r^2 (r-1) \sqrt{D}Y_1^{r})$$
	  and 
	  $$B^{*}_{n,g}( r^2 (r-1) \sqrt{D}X_1^{r},  r^2 (r-1) \sqrt{D} X_1^{r}
-  r^2 (r-1) \sqrt{D}Y_1^{r})$$
	   are in 
	$\mathcal{O}$.
	\end{lemma}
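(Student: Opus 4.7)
The plan is to put each of the two expressions into the form $A^*_{n,g}(\lambda, r^2\sqrt{D}c)$ and $B^*_{n,g}(\lambda, r^2\sqrt{D}c)$ with $\lambda \in \mathcal{O}$ and $c \in \mathbb{Z}$, and then invoke Lemma \ref{hyp}(iv). The proof is essentially a short consolidation of the algebraic integrality statements already established in Lemmas \ref{Siegelshows} and \ref{hyp}(iv); there is no substantive obstacle, so the main task is simply to verify that the first argument lies in $\mathcal{O}$ and that the second argument has the specific shape $r^2\sqrt{D}c$ with $c$ a rational integer.

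For the first argument, I would apply Lemma \ref{Siegelshows} to the binary form $\xi(x,y) = u(x,y)^r$: every coefficient of $r(r-1)\sqrt{D}\,\xi$ lies in $\mathcal{O}$, and likewise for $\eta(x,y) = v(x,y)^r$. Substituting the integer point $(x_1,y_1)$ therefore gives $r(r-1)\sqrt{D}\,u_1^r \in \mathcal{O}$ and $r(r-1)\sqrt{D}\,v_1^r \in \mathcal{O}$, so irrespective of which of the two cases defines $(X_1,Y_1)$ we obtain $r(r-1)\sqrt{D}\,X_1^r \in \mathcal{O}$. Multiplying by the rational integer $r$ then yields $\lambda := r^2(r-1)\sqrt{D}\,X_1^r \in \mathcal{O}$, as required.

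For the second argument, I would observe that since $(X_1,Y_1)$ is either $(u_1,v_1)$ or $(v_1,u_1)$, we have $X_1^r - Y_1^r = \pm(u_1^r - v_1^r) = \pm F(x_1,y_1) \in \mathbb{Z}$. Setting $c := (r-1)(X_1^r - Y_1^r) \in \mathbb{Z}$, the second argument can be rewritten as
$$r^2(r-1)\sqrt{D}\,X_1^r - r^2(r-1)\sqrt{D}\,Y_1^r = r^2(r-1)\sqrt{D}(X_1^r-Y_1^r) = r^2\sqrt{D}\,c.$$
At this point Lemma \ref{hyp}(iv), applied to the pair $(\lambda, r^2\sqrt{D}\,c)$, delivers both $A^*_{n,g}(\lambda, r^2\sqrt{D}\,c) \in \mathcal{O}$ and $B^*_{n,g}(\lambda, r^2\sqrt{D}\,c) \in \mathcal{O}$, which is exactly the conclusion. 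The only minor care needed is in the bookkeeping of the factor $r$: it must be absorbed into $\lambda$ (since Lemma \ref{hyp}(iv) requires the second slot to be of the form $r^2\sqrt{D}\cdot(\textrm{integer})$ with no additional factor of $r{-}1$), which is what we did above.
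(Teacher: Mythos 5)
Your proof is correct and takes exactly the route the paper intends: the paper gives no written proof, stating the lemma only as ``a direct consequence of Lemmas \ref{Siegelshows} and \ref{hyp}~(iv)'', and your argument is precisely the omitted verification --- $r^2(r-1)\sqrt{D}X_1^r\in\mathcal{O}$ by specializing Lemma \ref{Siegelshows} at the integer point and multiplying by $r$, and the second slot equals $r^2\sqrt{D}\,c$ with $c=(r-1)(X_1^r-Y_1^r)=\pm(r-1)F(x_1,y_1)\in\mathbb{Z}$. Nothing further is needed.
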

 	\begin{lemma}\label{lb}
 	If $\Sigma_{n,g}\neq 0$, then we have
\[
|\Lambda_{n,g}\tilde\Lambda_{n,g}|\geq 1.
\]
   	\end{lemma}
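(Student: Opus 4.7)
The plan is to show that $(\Lambda_{n,g}\tilde{\Lambda}_{n,g})^r$ equals, up to a sign, a nonzero rational integer, whence $|\Lambda_{n,g}\tilde{\Lambda}_{n,g}|\geq 1$ follows at once since nonzero rational integers have absolute value at least one.

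First I would establish that $\Lambda_{n,g}^r\in\mathcal{O}$, the ring of integers of $\mathbb{Q}(\sqrt{D})$. Expanding
$$
\Lambda_{n,g}=c_{n,g}\bigl[Y_2\cdot X_1^{rn+1-g}A_{n,g}(z_1)-Y_1X_2\cdot X_1^{rn-g}B_{n,g}(z_1)\bigr],
$$
the identities $X_1^{rn}A_{n,g}(z_1)=A^*_{n,g}(X_1^r,X_1^r-Y_1^r)$ and $X_1^{r(n-g)}B_{n,g}(z_1)=B^*_{n,g}(X_1^r,X_1^r-Y_1^r)$ hold because $A^*_{n,g}$ and $B^*_{n,g}$ are homogeneous of degrees $n$ and $n-g$. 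Homogeneity then lets me absorb a factor $r^2(r-1)\sqrt{D}$ into each argument, so by Lemma \ref{24}, both $(r^2(r-1)\sqrt{D})^n A^*_{n,g}(X_1^r,X_1^r-Y_1^r)$ and $(r^2(r-1)\sqrt{D})^{n-g}B^*_{n,g}(X_1^r,X_1^r-Y_1^r)$ lie in $\mathcal{O}$. The constant $c_{n,g}=r^n(r(r-1)\sqrt{D})^{n+g}(2/\chi)^{1-g}$ has been chosen precisely to cancel these scaling factors. For $g=0$, the residual factor $(2/\chi)X_iY_j$ lies in $\mathcal{O}$ by Lemma \ref{ai2Diag}, giving $\Lambda_{n,0}\in\mathcal{O}$. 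For $g=1$, the surviving factors $Y_1$ and $Y_2$ need not lie in $\mathbb{Q}(\sqrt{D})$ individually, but raising to the $r$-th power and using Lemma \ref{Siegelshows} (stating $r(r-1)\sqrt{D}Y_i^r\in\mathcal{O}$) places $\Lambda_{n,1}^r$ in $\mathcal{O}$. Thus $\Lambda_{n,g}^r\in\mathcal{O}$ in both cases.

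Next, let $\sigma$ denote the nontrivial automorphism of $\mathbb{Q}(\sqrt{D})/\mathbb{Q}$. By \eqref{conjugates} we have $\sigma(\xi)=-\eta$, so $\sigma$ sends $X_i^r\mapsto -Y_i^r$ and $Y_i^r\mapsto -X_i^r$; in particular $\sigma(z_1)=1-X_1^r/Y_1^r=\tilde{z}_1$, and since $A_{n,g},B_{n,g}$ have rational coefficients, $\sigma(A_{n,g}(z_1))=A_{n,g}(\tilde{z}_1)$ and similarly for $B_{n,g}$. Tracking signs carefully (the powers of $-1$ coming from $\sigma(X_i)=\omega Y_i$ with $\omega^r=-1$ collapse to $\pm 1$ after raising to the $r$-th power), I find $\sigma(\Lambda_{n,g}^r)=\pm\tilde{\Lambda}_{n,g}^r$, so
$$
(\Lambda_{n,g}\tilde{\Lambda}_{n,g})^r=\pm\,\Lambda_{n,g}^r\,\sigma(\Lambda_{n,g}^r)=\pm N_{\mathbb{Q}(\sqrt{D})/\mathbb{Q}}(\Lambda_{n,g}^r)\in\mathbb{Z}.
$$
The hypothesis $\Sigma_{n,g}\neq 0$ forces $\Lambda_{n,g}\neq 0$ and hence $\Lambda_{n,g}^r\neq 0$, so this norm is a nonzero rational integer. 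Therefore $|(\Lambda_{n,g}\tilde{\Lambda}_{n,g})^r|\geq 1$, and taking $r$-th roots yields $|\Lambda_{n,g}\tilde{\Lambda}_{n,g}|\geq 1$.

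The main obstacle is the bookkeeping in the integrality step: one must verify that the specific constant $c_{n,g}$ accounts for two independent sources of denominators, namely the scaling $(r^2(r-1)\sqrt{D})^{-n}$ arising from Lemma \ref{24} and the factor $\chi/2$ from the cross products $X_iY_j$, while simultaneously absorbing the residual $Y_i$ factors via Lemma \ref{Siegelshows} when $g=1$. A secondary subtlety is the sign tracking in the Galois step, complicated by the fact that $X_i,Y_i$ lie not in $\mathbb{Q}(\sqrt{D})$ but in an extension by an $r$-th root of $-1$, which is why passage to $r$-th powers is essential to get a clean norm identity.
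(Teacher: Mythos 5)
Your proposal is correct and follows essentially the same route as the paper: establish that $\Lambda_{n,0},\tilde\Lambda_{n,0}\in\mathcal{O}$ and $\Lambda_{n,1}^r,\tilde\Lambda_{n,1}^r\in\mathcal{O}$ via the homogenized polynomials $\mathcal{A}^{*}_{n,g},\mathcal{B}^{*}_{n,g}$ and Lemmas \ref{ai2Diag} and \ref{24}, observe that the two quantities (or their $r$-th powers) are algebraic conjugates over $\mathbb{Q}$, and conclude that their product is a nonzero rational integer. The only small slip is that for $g=1$ the binomial expansion of $\Lambda_{n,1}^r$ produces mixed products $Y_1^{r-a}Y_2^{a}$ and $X_1^{(r-1)(r-a)}X_2^{r-a}$, for which the relevant integrality statement is Lemma \ref{ai2Diag} (on $\binom{r}{a}r(r-1)\sqrt{D}\,u_1^au_2^b$) rather than Lemma \ref{Siegelshows}, which covers only pure $r$-th powers.
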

\noindent {\bf Note.} When $D<0,$ we have $|\Lambda_{n,g}|=|\tilde\Lambda_{n,g}|$
as $\Lambda_{n,g}$ and $\tilde\Lambda_{n,g}$ are complex conjugates.
 	\begin{proof}
Note that $A_{n,g}$ is of degree $n$ and $B_{n,g}$ is of degree $n-g.$
We re-write
	 \begin{eqnarray*}
	&& \Lambda_{n, g} =\\
	&&  \left(r (r-1) \sqrt{D}\right)^{g}
         \left(\frac{2}{\chi} \right)^{1-g} \left [X_{1}^{1 -g} Y_{2}
           \mathcal{A}^{*}_{n,g}-  
X_{1}^{(r-1)g}X_{2} Y_{1}(r^2(r-1)\sqrt{D})^g\mathcal{B}^{*}_{n,g}\right],
	 \end{eqnarray*}
 where
	$$
	\mathcal{A}^{*}_{n,g}  = A^{*}_{n,g}(r^2 (r-1) \sqrt{D} X_1^r , 
r^2 (r-1) \sqrt{D}(X_1^r - Y_1^r))
	$$
and
$$    
	\mathcal{B}^{*}_{n,g} = B^{*}_{n,g}(r^2 (r-1) \sqrt{D}X_1^r ,  
r^2 (r-1) \sqrt{D}(X_1^r - Y_1^r)).
	$$
  Let $g = 0.$ It follows by  Lemmas \ref{ai2Diag} and \ref{24} that
	  $$
	 \Lambda_{n, 0} = \frac{2}{\chi} X_{1} Y_{2} \mathcal{A}^{*}_{n,0} -   \frac{2}{\chi} X_{2} Y_{1}\mathcal{B}^{*}_{n,0}
	 $$
is in  $\mathcal{O}$.\\
 Let $g = 1$.  Then	 
\begin{eqnarray*}
	&& \Lambda_{n, 1}^r = 
(r(r-1)\sqrt{D})^r\left(Y_2A_{n,1}^*-
(r^2(r-1)\sqrt{D})X_1^{r-1}X_2Y_1B_{n,1}^*\right)^r\\
&=& (r(r-1)\sqrt{D})^r\sum_{a=0}^r (-1)^{r-a}{r\choose a}
Y_2^a Y_1^{r-a} \left({r\choose 1} X_1^{r-1}X_2\right)^{r-a} \times\\
&&(A_{n,1}^*)^a(r(r-1)\sqrt{D})^{r-a}(B_{n,1}^*)^{r-a}.
\end{eqnarray*}
Hence by Lemmas \ref{ai2Diag} and \ref{24}, we get $\Lambda_{n,1}^r \in \mathcal{O}$. \\
 As above one can prove that 
$\tilde\Lambda_{n,0}$,  
$\tilde\Lambda^{r}_{n,1}\in \mathcal{O}.$ 
Further since $X_1^r-Y_1^r\in \mathbb Z,$ one can easily see that 
$\Lambda_{n,0}$, $\tilde{\Lambda}_{n,0}$ and $\Lambda_{n,1}^r$, $\pm\tilde{\Lambda}_{n,1}^r$ are algebraic
conjugates in $\mathcal{O}.$ Hence we get
 $$
  |\Lambda_{n , g}\tilde\Lambda_{n,g}|\geq 1.
  $$
\end{proof}

	\begin{lemma}\label{c}
Let $F$ be a diagonalizable form given by \eqref{form}. 
Let $(x_1,y_1)$ and $(x_2,y_2)$ be two solutions of \eqref{firstineq} 
related to a fixed $r$-th root of unity, say $\omega$,
with
$\zeta_2\leq \zeta_1.$
Assume that $Z_1^r>2h$ and $\Sigma_{n,g} \neq 0.$ Suppose 
	$$
\Lambda'_{n,g}=c_{1}(n,g)  \,h Z_1^{nr+1-g}Z_{2}^{-r+1}+ c_{2}(n,g) \, 
h^ {2n+1-g}  Z_{1}^{-r(n+1-g)+1-g}Z_{2}
	$$ 
	where 
	$$
	c_{1}(n,g) = 2^{3n+2}|c_{n,g}|
	$$
	and
	$$
	c_{2}(n,g) =  2^{n+1-g}|c_{n,g}|
\left(1-\frac{2h}{Z_1^r}\right)^{-\frac{1}{2}(2n+1-g)}
\frac{|{n-g+1/r \choose n+1-g} {n- 1/r \choose n}|}{{2n + 1 - g \choose n}}
	$$
with $c_{n,g}$ given by \eqref{cng}. 
Then
$$\Lambda'_{n,g} \geq 1.$$
	\end{lemma}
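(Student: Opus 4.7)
The strategy is to prove $\Lambda'_{n,g}\geq 1$ by establishing $|\Lambda_{n,g}|\leq\Lambda'_{n,g}$ (and by the analogous argument, $|\tilde{\Lambda}_{n,g}|\leq\Lambda'_{n,g}$), then invoking Lemma \ref{lb}, which gives $|\Lambda_{n,g}\tilde{\Lambda}_{n,g}|\geq 1$. Together these yield $(\Lambda'_{n,g})^2\geq|\Lambda_{n,g}\tilde{\Lambda}_{n,g}|\geq 1$, so $\Lambda'_{n,g}\geq 1$.

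The key step is to apply Lemma \ref{hyp}(i) at $z=z_1$, using the \emph{principal} branch $\alpha_1$ of $(1-z_1)^{1/r}$ (close to $1$, with $|\alpha_1|=|Y_1/X_1|\leq 1$ since $|X_1|=Z_1\geq|Y_1|$). Because $(x_1,y_1)$ is related to $\omega$, one has $Y_1/X_1=\omega^{\mp 1}\alpha_1$, with the sign determined by whether $X_1=u_1$ or $X_1=v_1$. Noting that $|z_1|=|F_1|/Z_1^r\leq h/Z_1^r<1/2$ (by the hypothesis $Z_1^r>2h$), the identity reads
\[
A_{n,g}(z_1)-\alpha_1 B_{n,g}(z_1)=z_1^{2n+1-g}F_{n,g}(z_1).
\]
Substituting $\alpha_1 B_{n,g}(z_1)=A_{n,g}(z_1)-z_1^{2n+1-g}F_{n,g}(z_1)$ into the definition of $\Sigma_{n,g}$ yields the decomposition
\[
\Sigma_{n,g}=\frac{Y_2}{X_2}\,z_1^{2n+1-g}F_{n,g}(z_1)+\alpha_1\left(\frac{Y_2}{X_2}-\omega^{\mp 1}\right)B_{n,g}(z_1).
\]

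The two summands are estimated separately. For the second (``gap'') summand, the same case analysis applied to $(x_2,y_2)$, which is also related to $\omega$, gives $Y_2/X_2\approx\omega^{\mp 1}$, and Lemma \ref{6.12} yields $|Y_2/X_2-\omega^{\mp 1}|\leq h/Z_2^r$ up to a multiplicative factor at most $1$. Combined with $|\alpha_1|\leq 1$, $|B_{n,g}(z_1)|\leq 2^{3n+2}$ (the analogue of Lemma \ref{hyp}(ii) for $B_{n,g}$, valid since $|1-z_1|=|\alpha_1|^r\leq 1$), $|X_1|=Z_1$, and $|X_2|\leq Z_2$, this contributes at most $2^{3n+2}|c_{n,g}|\,h\,Z_1^{rn+1-g}Z_2^{-(r-1)}$ to $|\Lambda_{n,g}|$, matching the first summand of $\Lambda'_{n,g}$. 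For the first (``main'') summand, using $|Y_2|\leq Z_2$, $|z_1|\leq h/Z_1^r$, Lemma \ref{hyp}(i)'s bound on $|F_{n,g}(z_1)|$, and the elementary $(1-|z_1|)^{-1}\leq(1-2h/Z_1^r)^{-1}$, one recovers the second summand of $\Lambda'_{n,g}$ (the factor $2^{n+1-g}$ built into $c_2(n,g)$ providing the slack needed to absorb technical losses).

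Adding these bounds yields $|\Lambda_{n,g}|\leq\Lambda'_{n,g}$; the analogous argument with the roles of $X_i,Y_i$ interchanged and $\tilde z_1=1-X_1^r/Y_1^r$ in place of $z_1$ bounds $|\tilde{\Lambda}_{n,g}|\leq\Lambda'_{n,g}$, and Lemma \ref{lb} then gives the conclusion. The main technical subtlety is the branch-of-root coordination in the first step: by choosing the \emph{principal} branch $\alpha_1\approx 1$ rather than the branch equal to $Y_1/X_1\approx\omega^{\mp 1}$, the factor $Y_2/X_2-\omega^{\mp 1}$ that appears in the decomposition becomes a genuine gap quantity controlled by Lemma \ref{6.12}, whereas the naive substitution $(1-z_1)^{1/r}=Y_1/X_1$ would leave behind an uncontrolled factor $Y_2/X_2-1$. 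A secondary issue is ensuring $|\tilde z_1|<1/2$ for the symmetric bound on $|\tilde{\Lambda}_{n,g}|$: this is automatic when $D<0$ because then $|X_1|=|Y_1|=Z_1$, while for $D>0$ it requires separate verification using the fact that $|Y_1|$ is not much smaller than $Z_1$ when $(x_1,y_1)$ is related to $\omega$.
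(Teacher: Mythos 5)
Your proposal is correct and follows essentially the same route as the paper: bound $|\Lambda_{n,g}|$ and $|\tilde{\Lambda}_{n,g}|$ from above by $\Lambda'_{n,g}$ using the Pad\'e identity \eqref{ABF} with the principal branch of $(1-z_1)^{1/r}$, control the resulting ``gap'' term via Lemma \ref{6.12}, and combine with the lower bound $|\Lambda_{n,g}\tilde{\Lambda}_{n,g}|\geq 1$ of Lemma \ref{lb}. The one genuine difference is which polynomial you eliminate from $\Sigma_{n,g}$: you substitute for $A_{n,g}(z_1)$, so the gap factor $\frac{Y_2}{X_2}-\omega^{\mp 1}$ ends up multiplying $B_{n,g}(z_1)$, whereas the paper substitutes for $(1-z_1)^{1/r}B_{n,g}(z_1)$ and obtains $\bigl(\frac{\omega Y_2}{X_2}-1\bigr)A_{n,g}(z_1)+z_1^{2n+1-g}F_{n,g}(z_1)$, so the gap factor multiplies $A_{n,g}(z_1)$ and the bound \eqref{A} of Lemma \ref{hyp}(ii) applies verbatim. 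Your version therefore needs the analogue of Lemma \ref{hyp}(ii) for $B_{n,g}$, which you assert but which is not stated in the paper; it is true and follows by the identical argument (the polynomial $D_{n,g}(z)=B_{n,g}(1-z)$ has positive coefficients, so $|B_{n,g}(z)|\leq D_{n,g}(|1-z|)\leq D_{n,g}(2)\leq 2^{n}\binom{2n-g}{n}\leq 2^{3n+2}$ for $|1-z|\leq 2$), but you should supply it rather than cite Lemma \ref{hyp}(ii). Everything else --- the branch coordination $Y_1/X_1=\omega^{\mp 1}\alpha_1$, the role of the slack factor $2^{n+1-g}$ in $c_2(n,g)$ (which the paper spends on replacing $|Y_1|$ by $Z_1$ in the $\tilde{\Lambda}$ bound via $|Y_1|\leq |X_1|<2^{1/r}|Y_1|$), and the caveat about $|\tilde{z}_1|$ when $D>0$ --- matches the paper's argument.
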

 \begin{proof}
Since $Z_1^r>2h$, $\zeta_1<1/2$. Let $i\in\{1,2\}$.
If $D<0$, then $|X_i|=|Y_i|$. Hence we may assume that $X_iY_i\neq 0$. If $D>0$, as $\zeta_1<1/2$, $X_1^r$ and $Y_1^r$ are not zero and are of the same sign and $|X_1|<2^{1/r}|Y_1|$. Thus in either case $z_1=1-\frac{Y_1^r}{X_1^r}$ and $\tilde z_1=1-\frac{X_1^r}{Y_1^r}$
satisfy $|z_1|<1,|1-z_1|\leq 1,$
$$|\tilde z_1|=|F/Y_1^r|<1.$$
Also
$$0<\left|\frac{X_1^r}{Y_1^r}\right|=|1-\tilde z_1|< 2.$$ 
First suppose that $(X_1,Y_1)=(u_1,v_1)$. 
Since we assumed $(x_1,y_1)$ is related to $\omega,$ we have 
 $$
 (1-z_1)^{1/r}=Y_1\omega/X_1.
 $$ 
Then by \eqref{ABF}, we get
$$
|\Lambda_{n,g}| \leq
|c_{n,g}X_1^{rn+1-g}X_2| \left|  \left(\frac{\omega Y_{2}}{X_{2}} - 1
  \right)
A_{n,g}(z_{1})+ z_{1}^{2n+1-g}F_{n,g}(z_{1})   \right|.
	$$
Note that 
$$|z_1| = \frac{|F|}{|X_1|^r}\leq \frac{h}{|X_1|^r}.$$
Hence 
$$1-|z_1| \geq 1-\frac{h}{|X_1|^r}.$$
We apply the above inequality together with the estimates from
\eqref{Gap22}, \eqref{Gap23}, \eqref{F}  and \eqref{A}
to get 
\begin{eqnarray}\label{lambda}
&& \qquad \qquad   |\Lambda_{n,g}| \leq
|c_{n,g}|\ h\ Z_1^{rn+1-g}Z_2^{-r+1}2^{3n+2}+ \\ \nonumber
&+&|c_{n,g}|\left(h^ {2n+1-g}  Z_{1}^{-r(n+1-g)+1-g}Z_{2}
\left(1-\frac{h}{Z_1^r}\right)^{-\frac{1}{2}(2n+1-g)}\mathfrak{C}\right),
\end{eqnarray}
where $\mathfrak{C} = \frac{|{n-g+1/r \choose n+1-g} {n- 1/r \choose n}|}
{{2n + 1 - g \choose n}}$.
The above estimate also holds when $(X_1,Y_1)$ $=(v_1,u_1)$.
Similarly we have
\begin{eqnarray*}
&&|\tilde\Lambda_{n,g}| \leq
|c_{n,g}|\ h\ |Y_1|^{rn+1-g}Z_2^{-r+1}2^{3n+2}+ \\ 
&+&|c_{n,g}|\left(h^ {2n+1-g}  |Y_{1}|^{-r(n+1-g)+1-g}|Y_{2}|
\left(1-\frac{h}{|Y_1|^r}\right)^{-\frac{1}{2}(2n+1-g)}\mathfrak{C}\right).
\end{eqnarray*}
Using $|Y_1|\leq |X_1| <2^{1/r}|Y_1|,$ we get
\begin{eqnarray}\label{tildelambda}
& & \qquad \qquad  \qquad \qquad \qquad |\tilde\Lambda_{n,g}| \leq
|c_{n,g}|\ h\ Z_1^{rn+1-g}Z_2^{-r+1}2^{3n+2}+ \\ \nonumber
&&|c_{n,g}|\left(h^ {2n+1-g}2^{n+1-g-(1-g)/r}  Z_{1}^{-r(n+1-g)+1-g}Z_{2}
\left(1-\frac{2h}{Z_1^r}\right)^{-\frac{1}{2}(2n+1-g)}
\mathfrak{C}\right).
\end{eqnarray}
The above bound is valid for $|\Lambda_{n,g}|$ as well. 
Now we combine these upper bounds with the lower bound for $|\Lambda_{n,g}
\tilde\Lambda_{n,g}|$ in Lemma \ref{lb} to get assertion of the lemma.
\end{proof}

The bounds that were obtained for the size of algebraic numbers  above are useful when those numbers are non-zero.  Following an argument of Bennett in \cite{Ben2}, we show that $\Sigma_{n , g}$'s do not vanish often.

\begin{lemma}\label{nv}
 If $n \in \mathbb{N}$ and $I \in \{ 0, 1 \}$, then at most one of 
$$\left\{ \Sigma_{n,0}, \Sigma_{n+I,1}\right\}
$$ can vanish.
  \end{lemma}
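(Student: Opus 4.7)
The plan is to argue by contradiction: suppose both $\Sigma_{n,0}$ and $\Sigma_{n+I,1}$ vanish, and derive a forbidden identity among the hypergeometric polynomials.

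First I would translate the two vanishing conditions into cross-multiplied linear relations. From $\Sigma_{n,0}=0$ we obtain
\[
Y_1 X_2\, B_{n,0}(z_1)=X_1 Y_2\, A_{n,0}(z_1),
\]
and from $\Sigma_{n+I,1}=0$ we obtain
\[
Y_1 X_2\, B_{n+I,1}(z_1)=X_1 Y_2\, A_{n+I,1}(z_1).
\]
Multiplying the first equation by $A_{n+I,1}(z_1)$ and the second by $A_{n,0}(z_1)$ and subtracting eliminates the common factor $X_1 Y_2$, yielding
\[
Y_1 X_2\bigl[A_{n+I,1}(z_1) B_{n,0}(z_1) - A_{n,0}(z_1) B_{n+I,1}(z_1)\bigr]=0.
\]
By the setup (the paragraph introducing $\Sigma_{n,g}$ only defines these quantities under the assumption $X_1Y_1,X_2Y_2\neq 0$), the factor $Y_1 X_2$ is nonzero, so the bracketed expression must vanish.

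Next I would invoke the non-vanishing relation \eqref{BA} from Lemma \ref{hyp}(iii): for every nonzero complex $z$ and every $I\in\{0,1\}$,
\[
A_{n,0}(z)B_{n+I,1}(z)\ne A_{n+I,1}(z) B_{n,0}(z).
\]
The previous step therefore forces $z_1=0$. But $z_1=1-Y_1^r/X_1^r$ and by construction $|Y_1|\le|X_1|$ with $X_1^r-Y_1^r$ equal (up to sign and the fixed ordering) to $F(x_1,y_1)\ne 0$; hence $z_1\ne 0$, a contradiction. This establishes the claim.

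The argument is essentially one line once Lemma \ref{hyp}(iii) is in hand; the only point requiring care is checking that one is allowed to cancel $Y_1 X_2$ and that $z_1\ne 0$, both of which follow directly from the definitions and from $F(x_1,y_1)\ne 0$, so I do not foresee a genuine obstacle.
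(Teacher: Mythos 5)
Your proof is correct and is essentially the paper's argument: the paper packages the same elimination as the vanishing of a $3\times 3$ determinant with two identical rows, but the crux in both cases is that assuming $\Sigma_{n,0}=\Sigma_{n+I,1}=0$ forces $A_{n,0}(z_1)B_{n+I,1}(z_1)-A_{n+I,1}(z_1)B_{n,0}(z_1)=0$, contradicting Lemma \ref{hyp}(iii) since $z_1\neq 0$. Your explicit checks that $Y_1X_2\neq 0$ and $z_1\neq 0$ are correct and in fact slightly more careful than the paper's write-up.
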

\begin{proof}
    Let $n$ be a positive integer and  $I \in \{ 0 , 1\}$ . Following an argument of Bennett \cite{Ben2}, we define the matrix $\mathbf{M}$:
   \begin{displaymath}
  \mathbf{M} =
  \left( \begin{array}{ccc}
  A_{n,0}(z_{1}) & A_{n+I , 1}(z_{1}) &Y_1/X_1 \\
  A_{n,0}(z_{1}) & A_{n+I , 1}(z_{1}) &Y_1/X_1 \\
 B_{n,0}(z_{1}) & B_{n+I , 1}(z_{1}) & Y_2/X_2 
 \end{array} \right) .
 \end{displaymath}
 The determinant of $\mathbf{M}$ is zero because it has two identical rows. Expanding along the first row, we find that
  \begin{eqnarray*}
 && A_{n, 0}(z_{1}) \Sigma_{n+I , 1} -   A_{n+I, 1}(z_{1}) \Sigma_{n, 0} +\\
&&  + \frac{Y_1}{X_1}( A_{n, 0}(z_{1}) B_{n+I ,1}(z_{1})  - A_{n+I, 1}(z_{1}) B_{n ,0}(z_{1}) )
  \end{eqnarray*}
  vanishes and hence if $\Sigma _{n,0} = \Sigma _{n + I,1} = 0$,  then 
  $$
  A_{n, 0}(z_{1}) B_{n+I ,1}(z_{1})  - A_{n+I, 1}(z_{1}) B_{n ,0}(z_{1}) = 0,  
  $$
  contradicting part (iii) of Lemma \ref{hyp}.
 \end{proof}
\section{An  Auxiliary Lemma }\label{AAL}

 We devote this section to a lemma that will be used later in combination with our gap principle to complete the proof of our main theorems. 
 We will  repeatedly appeal to the induction procedure that is introduced  in the proof of  Lemma \ref{induction} in proving  our main theorems  in later sections.
 The ideas used in the proof are originally  due to Evertse in
 \cite{Eve1}, where he proved upper bounds for the number of solutions
 to cubic Thue equations.

The computational steps in the proof are
 verified by symbolic computation in MATHEMATICA. 
A file containing
 these steps is available at
  http://www.math.tifr.res.in/$\sim$saradha/hypergeometric.txt.

\begin{lemma}\label{induction}
Let $F(x, y)$ be a diagonalizable form of degree  $r \geq 5.$ Let $\omega$ be a fixed $r$-th root of unity, $S_\omega$ be the set of all primitive solutions to $0<|F(x , y)| \leq h$ that are related to $\omega$ and $S'_{\omega}$ be as in \eqref{defofS'}. Assume  $ |S'_{\omega}| = k \geq 3.$
Further, suppose that 
	\begin{equation}\label{j}
	|j|\geq 2 r^{i_{7}/r} h^{i_{8}/r}
	\end{equation}
with
	\be\label{ji1i2}
	i_{7}=\frac{7r^2}{R(k)-2r-1}, \, \, 
	i_{8}=\frac{2R(k)+r^2-3r}{R(k)-2r-1}.
	\ee
Then for every integer $n\geq 1,$ we have
	\be\label{ind_star}
Z_k\geq
\frac{Z_{k-1}^{(n+1)r-1}}{2^{n+4} r^{(3nr+2)/(r-2)}|j|^{(nr+2)/(r-2)}h^{2n+1}}.
	\ee
\end{lemma}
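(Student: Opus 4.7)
The plan is to prove the inequality \eqref{ind_star} by induction on $n \geq 1$, in each step applying Lemma \ref{c} to the pair $(x_{k-1}, y_{k-1})$, $(x_k, y_k) \in S_\omega$. First I would check the setup: since $k \geq 3$, Lemma \ref{43} gives $Z_{k-1} \geq |j|/(2h^{1/r})$, and the hypothesis \eqref{j} is strong enough that $Z_{k-1}^r > 2h$ (so Lemma \ref{c} applies with $Z_1 = Z_{k-1}$, $Z_2 = Z_k$) and $\zeta_{k-1} < 1/2$ (so Lemma \ref{gap_Siegel} is available). Iterating Lemma \ref{gap_Siegel} from $Z_1$ up to $Z_{k-1}$ also yields the stronger lower bound $Z_{k-1} \geq C(r) \cdot |j|^{R(k-1)}/h^{R(k-1)/r}$, which I will use to close off the undesired case in each induction step.

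For the base case $n = 1$: by Lemma \ref{nv} at least one of $\Sigma_{1,0}, \Sigma_{1,1}$ is nonzero, so Lemma \ref{c} applies for an appropriate $g \in \{0,1\}$. The resulting inequality $\Lambda'_{1,g} \geq 1$ forces at least one of its two summands to exceed $1/2$. If the second summand dominates, then directly $Z_k \geq Z_{k-1}^{r(2-g)-1+g}/(2c_2(1,g)h^{3-g})$; substituting the explicit form of $c_2(1,g)$ via \eqref{cng} and the relation $|j| = |\chi|\sqrt{|D|}$ yields exactly the stated inequality for $n = 1$. If instead the first summand dominates, then $Z_k^{r-1} \leq 4c_1(1,g)hZ_{k-1}^{r+1-g}$; combining with the gap bound $Z_k \geq (|j|/(2h))Z_{k-1}^{r-1}$ from Lemma \ref{gap_Siegel} produces $Z_{k-1}^{r^2-3r+g} \leq (\text{const})\, h^r/|j|^{r-1}$, and substituting the iterated lower bound $Z_{k-1} \geq C(r)|j|^{R(k-1)}/h^{R(k-1)/r}$ contradicts the calibration of $i_7, i_8$ in \eqref{ji1i2}.

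For the induction step, suppose \eqref{ind_star} holds for some $n \geq 1$. By Lemma \ref{nv} there is a nonzero $\Sigma_{n+1,g}$ with $g \in \{0,1\}$; applying Lemma \ref{c} with parameter $n+1$ splits $\Lambda'_{n+1,g} \geq 1$ into two summands. If the second dominates, one reads off the desired bound for $n+1$. If the first dominates, then $Z_k^{r-1} \leq 4c_1(n+1,g) h Z_{k-1}^{(n+1)r + 1 - g}$; inserting the inductive lower bound $Z_k \geq Z_{k-1}^{(n+1)r-1}/(2^{n+4}r^{(3nr+2)/(r-2)}|j|^{(nr+2)/(r-2)}h^{2n+1})$ on the left and rearranging produces an upper bound on $Z_{k-1}$ of the form $Z_{k-1}^{(r-1)((n+1)r-1) - ((n+1)r+1-g)} \leq (\text{explicit in } r, |j|, h)$. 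Once again, this bound is contradicted by the iterated lower bound on $Z_{k-1}$ together with \eqref{j}, so the second summand must dominate and the inequality \eqref{ind_star} holds with $n$ replaced by $n+1$.

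The principal technical obstacle is the bookkeeping of constants. The factor $c_{n,g}$ from \eqref{cng} is written in terms of $\sqrt{D}$ and $\chi$, and must be converted to the form stated in \eqref{ind_star} via $|j| = |\chi||\sqrt{D}|$ and $j^2 = \chi^2 D$; tracking the exponents $(3nr+2)/(r-2)$ of $r$ and $(nr+2)/(r-2)$ of $|j|$ as the induction proceeds, and verifying that the case $g = 1$ (which can occur when $\Sigma_{n+1,0} = 0$) still fits under the stated bound, is where the symbolic computation referenced by the authors becomes essential. The precise choice of $i_7, i_8$ in \eqref{ji1i2} is dictated by requiring the iterated gap bound on $Z_{k-1}$ to defeat the upper bound arising in the ``first summand dominates'' subcase at the base level $n = 1$.
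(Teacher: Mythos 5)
Your overall strategy -- induction on $n$, applying Lemma \ref{c} to the pair $(x_{k-1},y_{k-1})$, $(x_k,y_k)$, using Lemma \ref{nv} for non-vanishing, and showing that the first summand $\Xi_{n,g}$ of $\Lambda'_{n,g}$ cannot be the dominant one by playing the inductive lower bound for $Z_k$ against the iterated gap bound for $Z_{k-1}$ and the calibration of $i_7,i_8$ -- is the same as the paper's, merely phrased contrapositively (the paper verifies condition \eqref{Xi} directly so that $\Xi_{n,g}\leq 1/2$ and hence $\Pi_{n,g}\geq 1/2$). That part of your outline is sound.

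However, there is a genuine gap in how you handle the case $g=1$. When $\Sigma_{n+1,0}=0$ and you are forced to use $\Sigma_{n+1,1}\neq 0$, the ``second summand dominates'' conclusion of Lemma \ref{c} gives $Z_k \gg Z_{k-1}^{\,r(n+1-g)-1+g}$, and for $g=1$ this exponent is $r(n+1)$, whereas the target \eqref{ind_star} at level $n+1$ requires the exponent $(n+2)r-1$. The deficit of $r-1$ in the exponent of $Z_{k-1}$ is structural: since $Z_{k-1}$ is large (and unbounded above), no amount of constant bookkeeping or symbolic computation converts a bound with exponent $r(n+1)$ into one with exponent $(n+2)r-1$. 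The same problem already occurs in your base case: with $g=1$ a single application yields $Z_k\gg Z_{k-1}^{\,r}$ rather than $Z_{k-1}^{\,2r-1}$, so it does not ``yield exactly the stated inequality for $n=1$.'' The paper repairs this by exploiting the full strength of Lemma \ref{nv}: if $\Sigma_{n+1,0}=0$ then \emph{both} $\Sigma_{n+1,1}$ and $\Sigma_{n+2,1}$ are nonzero, and one applies the machinery twice, at levels $(n+1,1)$ and then $(n+2,1)$. The second application overshoots the target exponent by one (it gives $(n+2)r$ instead of $(n+2)r-1$), and the surplus factor of $Z_{k-1}$, together with the mismatch in the constants, is then absorbed using the explicit lower bound \eqref{conc} on $Z_{k-1}$, which itself is guaranteed by hypothesis \eqref{j}. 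This two-step descent is a necessary additional idea that your proposal omits, so as written the induction does not close in the $g=1$ branch.
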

 \begin{proof}
Let $(x_{1}, y_{1}), \ldots, (x_{k}, y_{k})$ be the elements of  $S'_{\omega}$ indexed such that 
$\zeta_{i+1}\leq \zeta_{i}$ for $i=1, \ldots,  k-1$.
 By \eqref{j}, $$|j|\geq 2^{1+(r-2)/(r(R(k-1)-1))}h^{2/r}.$$
By the Remark following Definition 5.3, we see that $\zeta_i<1$ for $1\leq i \leq k.$
From Lemma \ref{43}, we get
\begin{equation}\label{u11}
Z_i\geq \frac{|j|}{2 h^{1/r}} \textrm{ for } i=1,\ldots,k.
\end{equation}
By Lemma \ref{gap_Siegel} we have
\begin{equation}\label{uk}
Z_{i}\geq \frac{|j|}{2h}Z_{i-1}^{r-1},
\end{equation}
for $i =2, \ldots, k$.
Therefore we obtain
\begin{equation}\label{uk1}
Z_k\geq \left(\frac{|j|}{2h}\right)^{\frac{R(k)-1}{r-2}}Z_1^{R(k)},
\end{equation}
where $R(k)$ is as in Definition \ref{defofR(k)}.
At many instances later, we will use the above inequality with $k$ replaced by 
$k-1.$
Thus we derive that $Z_{k-1}^r>2h$ and hence $\zeta_{k-1}<1/2$.
Now, we use Lemma \ref{c} with $(x_1,y_1)$ and $(x_2,y_2)$ replaced by
$(x_{k-1},y_{k-1})$ and $(x_{k},y_{k})$, respectively. For $n\in\mathbb{N}$,
we define
	$$\Lambda'_{n,g}:=\Xi_{n,g}+\Pi_{n,g},$$
where
	$$\Xi_{n,g}=c_{1}(n,g)  \,h Z_{k-1}^{nr+1-g}Z_{k}^{-r+1}$$
and
	$$\Pi_{n,g}=c_{2}(n,g) \, 
	h^ {2n+1-g}  Z_{k-1}^{-r(n+1-g)+1-g}Z_{k}.
	$$

In order to give upper bounds for the quantities
$c_1(n,g)$ and $c_2(n,g)$, we 
recall that by \eqref{LGauss} we have
$$|\chi| \geq \frac{1}{r^{4/(r-2)}|j|^{2/(r-2)}}$$
and by definition $j^2=\chi^2 D$.
Then
\begin{equation}\label{c1}
|c_1(n,g)| \leq 2^{3n+3}r^{\frac{r(2g+3n)+2}{r-2}}
|j|^{\frac{r(g+n)+2}{r-2}}.
\end{equation}
By \eqref{u11} and \eqref{uk1}, we get
$$\left(1-\frac{2h}{Z_{k-1}^r}\right)^{-\frac{1}{2}(2n+1-g)}\leq 2^{n+1}.$$
Since ${2n+1-g \choose n}\geq 2^{2n}/(n+1)$ and 
$|{n-g+1/r \choose n+1-g} {n- 1/r \choose n}|\leq 1,$ we get
\begin{equation}\label{c2}
|c_2(n,g)| \leq 2^{n+3}r^{\frac{r(2g+3n)+2}{r-2}}
|j|^{\frac{r(g+n)+2}{r-2}}.
\end{equation}
By \eqref{c1},  
$|\Xi_{n,g}|\leq 1/2$ if
\begin{equation}\label{Xi}
2^{3n+4}r^{\frac{r(2g+3n)+2}{r-2}}
|j|^{\frac{r(g+n)+2}{r-2}}h Z_{k-1}^{nr+1-g}\leq Z_k^{r-1}.
\end{equation}
 Then if $\Sigma_{n,g}\neq 0$, by Lemma \ref{c} and \eqref{c2}, we get
	\begin{equation}\label{u31}
	Z_k\geq \frac{Z_{k-1}^{r(n+1-g)-1+g}}{2^{n+4}r^{(r(2g+3n)+2)/(r-2)}|j|^{(r(g+n)+2)/(r-2)}
	h^{2n+1-g}}.
	\end{equation}
For a given integer $n \geq 1$ and $g \in \{0,1\}$ let $a_i=a_i(n,g,r), 1\leq i\leq 5$ be some rational numbers which will be specified at different stages. 
We say that {\it property $P[a_1,a_2,a_3,a_4,a_5]$} holds if
$$Z_k\geq \frac{Z_{k-1}^{a_1}}{2^{a_2}r^{a_3}|j|^{a_4}h^{a_5}}.$$
\vskip 2mm
\noindent
Suppose $P[a_1,a_2,a_3,a_4,a_5]$ holds and $a_2+a_4\geq 0$. Then the inequality  \eqref{Xi} holds if
\begin{eqnarray}\label{k-1*}
&&Z_{k-1}^{a_1(r-1)-nr-1+g}\geq \\ \nonumber
&&2^{a_2(r-1)+3n+4}r^{a_3(r-1)+\frac{r(2g+3n)+2}{r-2}}
|j|^{a_4(r-1)+\frac{r(g+n)+2}{r-2}}h^{a_5(r-1)+1}.
\end{eqnarray}
Suppose $A_1:=a_1(r-1)-nr-1+g>0$. By \eqref{uk1}, the inequality \eqref{k-1*} holds if
\begin{eqnarray}\label{u1}
& & Z_1^{A_1R(k-1)}\ |j|^{A_1\left(\frac{R(k-1)-1}{r-2}\right)-a_4(r-1)-
\frac{r(g+n)+2}{r-2}}
\geq\\ \nonumber 
& & 2^{A_1\left(\frac{R(k-1)-1}{r-2}\right)+
    a_2(r-1)+3n+4}r^{a_3(r-1)+\frac{r(2g+3n)+2}{r-2}}
h^{A_1\left(\frac{R(k-1)-1}{r-2}\right)+a_5(r-1)+1}.
\end{eqnarray}
By \eqref{u11}, the inequality 
\eqref{u1} holds if
\begin{equation}\label{j2rh}
|j|^{B_1}\geq 2^{B_2}r^{B_3}h^{B_4},
\end{equation}
where
$$B_1=A_1
\left(\frac{R(k)-1}{r-2}\right)-a_4(r-1)-\frac{r(g+n)+2}{(r-2)},$$
$$B_2= A_1
\left(\frac{R(k)-1}{r-2}\right)+a_2(r-1)+3n+4,$$
$$B_3=a_3(r-1)+\frac{r(2g+3n)+2}{r-2}$$
and 
$$B_4=A_1\left(\frac{2R(k)-r}{r(r-2)}\right)+a_5(r-1)+1.$$
Using \eqref{j}, if the following conditions $(i)$--$(iv)$ hold, then \eqref{j2rh} and hence \eqref{Xi} are valid.
$$(i)\ A_1>0,$$
$$(ii)\ B_1>0,$$
$$(iii)\ B_1\times i_{7}\geq r(B_3+(B_2-B_1)/2)$$
$$(iv)\ B_1\times i_{8}\geq r B_4.$$
Then if $\Sigma_{n,g}\neq 0$, \eqref{u31} holds.

To complete the proof we use  induction on $n$. First we verify the basis of induction by implementing MATHEMATICA.
By \eqref{uk}, $P[r-1,1,0,-1,1]$ holds.
Fix $(n,g)=(1,0)$. Suppose $\Sigma_{1,0}\neq 0$. 
We check that $(i)-(iv)$ are valid with $a_1=r-1$, $a_2=1$, $a_3=0$, $a_4=-1$ and $a_5=1$. 
Hence by \eqref{u31}, $P[2r-1,5,(3r+2)/(r-2),(r+2)/(r-2),3]$ holds, i.e. \eqref{ind_star} is 
valid with $n=1$.

If
$\Sigma_{1,0}= 0$, then by Lemma \ref{nv}, both $\Sigma_{1,1}$ and 
$\Sigma_{2,1}$ are non-zero. Fix $(n,g)=(1,1)$. We verify that $(i)-(iv)$ 
are valid with $a_1=r-1$, $a_2=1$, $a_3=0$, $a_4=-1$ and $a_5=1$. Hence $P[r,5,(5r+2)/(r-2),(2r+2)/(r-2),2]$ holds.
 Now we 
fix $(n,g)=(2,1)$ and check that $(i)-(iv)$ are valid with 
$a_1=r$, $a_2=5$, $a_3=(5r+2)/(r-2)$, $a_4=(2r+2)/(r-2)$ and $a_5=2$ since 
$r \geq 5$ and $k \geq 3.$
Thus $P[2r,6,(8r+2)/(r-2),(3r+2)/(r-2),4]$ holds. Hence \eqref{ind_star} is valid for $n=1$ provided
	\be\label{conc}
	\frac{Z_{k-1}}{2r^{5r/(r-2)}|j|^{2r/(r-2)}h}\geq 1.
	\ee
By \eqref{uk1} with $k$ replaced by $k-1$ and \eqref{u11} with $i=1$, this is valid if
\begin{equation}\label{final}
|j|\geq 2^{(R(k)+r-3)/(R(k)-2r-1)}r^{5r/(R(k)-2r-1)}h^{(2R(k)+r^2-3r)/(r(R(k)-2r-1))}.
\end{equation}
The above inequality holds by \eqref{j}.

Now we proceed by assuming  that $P[(n+1)r-1,n+4,(3nr+2)/(r-2),(nr+2)/(r-2),2n+1]$ holds for
some $n\geq 1$.
We will  show that the property holds for $n+1$. Fix $(n,g)=(n+1,0).$
Suppose $\Sigma_{n+1,0}\neq 0.$   
We verify that $(i)-(iv)$ are valid. Hence
 the property follows for $n+1$.\\

Next we suppose that $\Sigma_{n+1,0}=0.$ Then by Lemma \ref{nv} both $\Sigma_{n+1,1}$ and $\Sigma_{n+2,1}$ are non-zero. First fix $(n,g)=(n+1,1)$ and check that $(i)-(iv)$ are valid so that 
$P[(n+1)r,n+5,((3n+5)r+2)/(r-2),(r(n+2)+2)/(r-2),2n+2]$ holds. \\

Now fix $(n,g)=(n+2,1)$. Proceeding as above, we obtain
 that $P[(n+2)r,n+6,((3n+8)r+2)/(r-2),((n+3)r+2)/(r-2),2n+4]$ holds.
Thus property $P$ holds for $n+1$ by \eqref{conc}.
This completes the induction.
\end{proof}
\section{Proof of Theorem \ref{thml}}\label{proofofthml}
Let  $F(x , y)$  be a diagonalizable form given in \eqref{form}. If $F(x , y)$ is a definite form, \eqref{deltainrj} and
Corollary \ref{definite} imply Theorem \ref{thml}. Let $\omega$ be an $r$-th root of unity and  $S_\omega$ be the set of all solutions of the inequality  $0 < |F(x , y)| \leq h$ that are related to $\omega$. Let $k = |S'_{\omega}|$ and $m\geq 3$ be the integer in the statement of Theorem \ref{thml} that satisfies \eqref{Delta_condn} and \eqref{twoalphas}. We will show that  $k \leq m-1$. Assume $k \geq m$. Then
by \eqref{Delta_condn}, we have
\begin{equation}
	\Delta' \geq  r^{\mathcal{A}_1}h^{\mathcal{A}_2}
	\end{equation}
where 
	\begin{equation}
	\mathcal{A}_1=\frac{7r^2(r-1)}{(r-1)^{k-1}-2r-1}\ \textrm{ and }\
	\mathcal{A}_2=\frac{(r-1)(r^2+r+2)}{(r-1)^{k-1}-2r-1}.
	\end{equation}
Therefore, by \eqref{deltainrj}, we find that the inequality \eqref{j} holds.  
By Lemma \ref{induction}, \eqref{u11} and \eqref{uk1}, for every integer $n \geq 1$, we have
\begin{equation}\label{TAE}
Z_k> \frac{|j|^{C_1}}{2^{C_2}r^{C_3}h^{C_4}},
\end{equation}
with 
$$C_1=(((n+1)r-1)(R(k)-1)-(nr+2))/(r-2),$$
$$C_2=(((n+1)r-1)(R(k)-1)+(n+4)(r-2))/(r-2),$$
$$C_3=(3nr+2)/(r-2)$$
and
$$C_4=2n+1+((n+1)r-1)(2R(k)-r)/(r^2-2r).$$
By the choices of $i_{7},i_{8}$ in \eqref{j}, the right hand side of \eqref{TAE} goes to infinity as $n \rightarrow \infty.$ This is a contradiction. We conclude that 
$$
|S_\omega'|=k \leq m-1.
$$
Note that for every $(x,y)$ in $S_\omega',$ we have $\zeta<1,$ 
by \eqref{defofS'}. 
Since we have $r$ different choices for $\omega$, and if $D> 0$, Lemma \ref{positiveDforms} implies one or two possible choices for $\omega$, 
we get
$$
N_F(h) \leq
\begin{cases}
r(m-1)+r\ {\rm if}\ D<0\\
2(m-1)+2\ {\rm if}\ D>0, r\ {\rm is\ even\ and}\ F \ {\rm is\ indefinite}\\
m\ {\rm if}\ D>0, r\ {\rm is\ odd\ and}\ F \ {\rm is\ indefinite}\\
\end{cases}
$$
Thus our proof is complete.
\qed

\subsection*{Proof of Corollary \ref{newcor}}
Let $\alpha_{1}$ and $\alpha_{2}$ be as in Theorem \ref{thml}.
We have 
$$
\frac{1}{2(r-1) + \alpha_{2}} = \frac{1}{2(r-1)} - \frac{\alpha_{2}}{2(r-1) [2(r-1) + \alpha_{2}]}.
$$
Since $\alpha_{2} > 0$, and by our assumption on the size of $\epsilon$ in the statement of Corollary \ref{newcor},  we have
$$  \frac{\alpha_{2}}{2(r-1) [2(r-1) + \alpha_{2}]} <  \frac{\alpha_{2}}{4(r-1)^2 }< \epsilon.
$$
Therefore, since $\alpha_{2} > 0$ and $\frac{r+\alpha_{1}}{2(r-1)+\alpha_{2}} < 7$, we obtain 
$$
0 < h \leq \frac{|\Delta|^{\frac{1}{2(r-1)} - \epsilon}}{2^{\frac{r}{2}}\, r^7} <  \frac{|\Delta|^\frac{1}{2(r-1) + \alpha_{2}}}{ 2^{\frac{r^2-r}{2(r-1) + \alpha_{2}}}\, r^{\frac{r+\alpha_{1}}{2(r-1)+\alpha_{2}} }  }.
$$
So by choosing  $\epsilon$ so that $$
 \frac{\alpha_{2}}{4(r-1)^2 } =\frac{(r^2+r+2)}{4(r-1)[(r-1)^{m-1}-2r-1]}<\epsilon<\frac{1}{2(r-1)},
$$ 
the integer $h$ will  satisfy \eqref{Delta_condn}  (see \eqref{Deltaprime} for definition of $\Delta'$) 
 and one can apply Theorem \ref{thml}.

Now assume that $D < 0$ and for a given value of $\epsilon > 0$,
$$0 < h \leq \frac{|\Delta|^{\frac{1}{2(r-1)} - \epsilon}}{2^{\frac{r}{2}}\, r^7}.$$
We are looking for $m \geq 3$ so that 
$$
\frac{|\Delta|^{\frac{1}{2(r-1)} - \epsilon}}{2^{\frac{r}{2}}\, r^7} <  \frac{|\Delta|^\frac{1}{2(r-1) + \alpha_{2}}}{ 2^{\frac{r^2-r}{2(r-1) + \alpha_{2}}}\, r^{\frac{r+\alpha_{1}}{2(r-1)+\alpha_{2}} }  },
$$
that is  $\frac{\alpha_{2}}{4(r-1)^2 } =\frac{(r^2+r+2)}{4(r-1)[(r-1)^{m-1}-2r-1]}<\epsilon$. Since
$$(r-1)^{m-2} <  (r-1)^{m-1} -2r -1$$
and
$$(r-1)^3 > r^2+r+2,$$
we may choose
$$
\epsilon >  \frac{(r-1)^{3}} {4 (r-1) (r-1)^{m-2}}= \frac{1}{4 (r-1)^{m-4}}.
$$
This means, the integer $m$ must be chosen to satisfy
$$
(m-4) >  \frac{\log \frac{1}{\epsilon} - \log 4}{\log(r-1)}.
$$

\qed

\section{Proof of Theorem \ref{2r1}}\label{proofof2r1}
Let  $F(x , y)$  be a diagonalizable form given in \eqref{form}.
As in Definition \ref{defofx,y0}, let $(x_0,y_0)$ be the solution with the largest $\zeta$ value $\zeta_0$. As before, let $S$ be the set of all solutions to the inequality $0 < |F(x , y)| \leq h$ and  $\omega$  a fixed $r$-th root of unity. We define  $S''_{\omega}$ to  be the set of solutions in $S\setminus\{(x_0,y_0)\}$ that are related to $\omega$.  In order to prove Theorem  \ref{2r1}, we will show that $|S''_{\omega}| \leq 2$.  The proof is similar to the proof of Theorem \ref{thml}. 

By Lemma \ref{Except1}, since $|j|>2h^{2/r},$ all solutions in 
$S_\omega''$ have $\zeta<1.$ Also
         \be\label{12r1}
	Z_i\geq\frac{|j|^{1/2}}{2^{1/2}h^{1/r}} \textrm{ for } i\geq 1.
	\ee
Suppose $|S_\omega''|\geq 3.$ Let $(x_1,y_1),(x_2,y_2),(x_3,y_3)\in S_\omega''$, with $\zeta_{1}\geq \zeta_{2} \geq \zeta_{3}$. Note that the proof of
Lemma \ref{gap_Siegel} is valid for the elements of $S_\omega''$ as well. Hence
	\be\label{z3z2}
	Z_3\geq \frac{|j|}{2h}Z_2^{r-1} \textrm{ and } Z_2\geq \frac{|j|}{2h}Z_1^{r-1}.
	\ee
Let $\Sigma_{n,g}$ be as in Section \ref{AN}, with $(x_1,y_1)$ and $(x_2,y_2)$ replaced by $(x_2,y_2)$ and $(x_3,y_3)$, respectively. Let
	$$\Lambda'_{n,g}=\Xi_{n,g}+\Pi_{n,g}$$
where
	$$\Xi_{n,g}=c_{1}(n,g)  \,h Z_{2}^{nr+1-g}Z_{3}^{-r+1}$$
and
	$$\Pi_{n,g}=c_{2}(n,g) \, 
	h^ {2n+1-g}  Z_{2}^{-r(n+1-g)+1-g}Z_{3}.
	$$
By \eqref{delta2r1} and \eqref{deltainrj}, we get
	\be\label{22r1}
	|j|\geq 2r^{i_{7}/r}h^{i_{8}/r}
	\ee
with
	\be\label{32r1}
	i_{7}=\frac{13r^2}{r^2-5r-2} \textrm{ and } i_{8}=\frac{2(3r-1)(r-2)}{r^2-5r-2}.
	\ee
Taking $t=3$ and $(j_1,j_2,j_3)=(1,2,3)$ in Lemma \ref{43}, \eqref{22r1} and \eqref{32r1} imply that $\zeta_2<1/2$. Similarly to  Section \ref{AAL},  by applying  Lemma \ref{c} with $(Z_1,Z_2)$ replaced by $(Z_2,Z_3)$,
we obtain
	\[
	Z_3\geq \frac{Z_{2}^{r(n+1-g)-1+g}}{2^{n+4}r^{(r(2g+3n)+2)/(r-2)}|j|^{(r(g+n)+2)/(r-2)}
	h^{2n+1-g}}
	\]
if
	\be\label{52r1}
	Z_3^{r-1}\geq 2^{3n+4}r^{(r(2g+3n)+2)/(r-2)}|j|^{(r(g+n)+2)/(r-2)}
	h Z_2^{nr+1-g}.
	\ee
As in Lemma 8.1, for a given integer $n \geq 1$ and $g \in \{0,1\},$ let 
$a_i=a_i(n,r,g),1 \leq i \leq 5,$ be some rational numbers which will be chosen.
	 We say that  $P[a_1,a_2,a_3,a_4,a_5]$ holds if
 \[
	Z_3\geq\frac{Z_2^{a_1}}{2^{a_2}r^{a_3}|j|^{a_4}h^{a_5}}.
	\]
Let  $A_1:=a_1(r-1)-nr-1+g>0$ and assume that $P[a_1,a_2,a_3,a_4,a_5]$ holds with $a_2+a_4\geq 0$. Then the inequality  \eqref{52r1} holds if
	\be\label{62r1}
	Z_1^{A_1(r-1)}\ |j|^{A_1-a_4(r-1)-
\frac{r(g+n)+2}{r-2}}
\geq
	\ee
	$$
	2^{A_1+	    a_2(r-1)+3n+4}r^{a_3(r-1)+\frac{r(2g+3n)+2}{r-2}}
	h^{A_1+a_5(r-1)+1}.
	$$
Using \eqref{12r1}, we see that \eqref{62r1} is valid if
	\[
	|j|^{B_1}\geq 2^{B_2}r^{B_3}h^{B_4}
	\]
where
	$$B_1=
	\frac{(r+1)}{2}A_1-a_4(r-1)-\frac{r(g+n)+2}{(r-2)},$$
	$$B_2= 
	\frac{(r+1)}{2}A_1+a_2(r-1)+3n+4,$$
	$$B_3=a_3(r-1)+\frac{r(2g+3n)+2}{r-2},
	$$
and 
	$$B_4=(2-1/r)A_1+a_5(r-1)+1.$$
By \eqref{22r1} and \eqref{32r1},
we have
	\be\label{83}
	|j|\geq 2^{(r+3)(r-2)/(r^2-5r-2)}r^{10r/(r^2-5r-2)}h^{2(3r-1)(r-2)/(r(r^2-5r-2))}.
	\ee
We implement the induction procedure given in Section \ref{AAL} with the above values of $B_1,\cdots ,B_4$ and $a_1,\cdots ,a_5$ as already given in the procedure. The conditions $(i)$--$(iv)$ are satisfied at every stage of the induction. Further, by \eqref{uk1} with $k=2$ and using \eqref{12r1} and \eqref{83}, it follows that \eqref{conc} is true.
Hence we get that the property 
$P[(n+1)r-1,n+4,(3nr+2)/(r-2),(nr+2)/(r-2),2n+1]$ is valid for any 
$n\geq 1$. Thus we have
	\begin{equation}\label{theaboveinequality76}
	Z_3\geq \frac{Z_2^{(n+1)r-1}}{2^{n+4}r^{(3nr+2)/(r-2)}|j|^{(nr+2)/(r-2)}h^{2n+1}}.
	\end{equation}
By \eqref{z3z2}, \eqref{22r1} and \eqref{32r1},   the
right hand side of the inequality \eqref{theaboveinequality76} tends to infinity as $n$
approaches infinity. This is a contradiction. Therefore, we conclude that for every given $\omega$,
$$|S''_\omega|\leq 2.$$

Counting the solution $(x_{0}, y_{0})$ (See Definition \ref{defofx,y0}), by Lemma \ref{positiveDforms} and the fact that there are generally $r$ choices for 
$\omega$, our proof is complete.\qed

\section{On Large solutions; proofs of Theorems \ref{thmy} and \ref{thmH}}\label{onlargesol}
\subsection*{Proof of Theorem \ref{thmy}}

The proof is similar to the proof  of Theorem \ref{thml}.  We will use 
 the proof of Lemma \ref{induction}. Let $\omega$ be a fixed $r$-th root of unity. Suppose that there are $k$ solutions $(x_{i}, y_{i})$, $i=1, \ldots, k$, that are related to $\omega$
with  $y_i\geq Y_L$ for all $i$. Also assume that the solutions are indexed such that $\zeta_{i+1} \leq \zeta_{i}$.  Let $m$ be the integer in the statement of Theorem  \ref{thmy}.  We will show that $k \leq m$. Let us assume $$m< k.$$ Then \eqref{ycondn} implies that 

\begin{equation}\label{ycondn2}
	y_{i} > \frac{r^{i_1} h^{i'_2}}{|j|^{i_3}}\ \textrm{ for }\ i\geq 1,
        \end{equation}
where
$$i_1=2+\frac{2}{r},\
i'_2= \frac{1}{r-2}+\frac{r-3}{(r-2)(r-1)^{k-2}}$$
and
\[
i_3=\begin{cases}
0 &\mbox{if } |j| \geq 1\\
\frac{r}{2(r-2)} & \textrm{otherwise}.
\end{cases}
\]
By Lemma \ref{negativedisc} we have
\be\label{u1y}
Z_i\geq \frac{|j|^{1/2}y_i}{2} \textrm{ for } 1\leq i\leq k.
\ee
This, together with \eqref{ycondn2}, implies that $Z_i^{r}>2h$. Hence $\zeta_i<1/2$ for all $i$.
 The inequality  \eqref{u1} holds if
\begin{equation}\label{y2rhj}
|j|^{B_6}y_1^{A_1R(k-1)}\geq 2^{B_2}r^{B_3}h^{B_5}
\end{equation}
where $A_1,B_2,B_3$ are as in Lemma \ref{induction};
$$B_5=A_1\frac{R(k-1)-1}{r-2}+a_5(r-1)+1$$
and
$$B_6=A_1\left(\frac{rR(k-1)-2}{2(r-2)}\right)-a_4(r-1)-\frac{r(g+n)+2}{r-2}.$$
Thus \eqref{y2rhj} is true provided the conditions\\
$ (v)\ A_1 R(k-1)\times i_1 \geq B_3+B_2/2$\\
$(vi) \ A_1 R(k-1)\times i_2' \geq B_5$\\
$(vii)\ 0\leq B_6\leq A_1 R(k-1)\times \frac{r}{2(r-2)}$\\
hold. 
We implement the induction procedure given in Section \ref{AAL} with the above values $A_1$, $B_2$, $B_3$, $B_5$, $B_6$ and $a_1,\cdots ,a_5$ as already given in the procedure. The conditions $(v)$--$(vii)$ are satisfied at every stage of the induction. Hence \eqref{Xi} is valid.
Further by the assumptions on $k$ and $r$, \eqref{conc} is valid by
\eqref{uk1} with $k$ replaced by $k-1$, \eqref{u1y} and \eqref{ycondn2}, thereby completing the induction.
Similarly from \eqref{ind_star}, we conclude that  for every $n \geq 1,$
\begin{eqnarray*}
&&Z_k \geq \\
&&|j|^{\left(\frac{rR(k-1)-2}{2(r-2)}-i_3R(k-1)\right)((n+1)r-1)-
\frac{nr+2}{r-2}}\times\\
&&r^{\frac{(3r^2-3r-8)R(k-1)+r}{2r(r-2)}((n+1)r-1)-\frac{7nr+4r-2n-4}{2(r-2)}}
h^{(n+1)r-2n-2}.
\end{eqnarray*}
The right hand side of the above
inequality goes to infinity as $n \rightarrow \infty.$ This is a contradiction. Therefore we have
$$
k\leq m.
$$
The rest of the proof is as in the proof of Theorem \ref{thml}.\qed

\subsection*{Proof of Theorem \ref{thmH}}

The proof is similar to the proof  of Theorem \ref{thmy}.  We will use 
 the proof of Lemma \ref{induction}. Let $\omega$ be a fixed $r$-th root of unity. Suppose that there are $k$ solutions $(x_{i}, y_{i})$, $i=1, \ldots, k$, that are related to $\omega$
with 
$|H(x_i,y_i)|\geq H_L$ for all $i$.  Assume that $m< k$. 
By \eqref{Hinrj}, we get
\be\label{67H}
Z_i\geq\left(\frac{|H_i|}{r^2(r-1)^2|j|^2}\right)^{1/(2r-4)} \textrm{ for } 1\leq i\leq k.
\ee
This, together with \eqref{HL},$r\geq 5, m\geq 3$ and our assumption  $m< k$, implies that $Z_i^{r}>2h$. Hence $\zeta_i<1/2$ for all $i$.
 The inequality \eqref{u1} holds if
\begin{equation}\label{HLB7}
|j|^{B_7}r^{B_8}h^{B_9}\geq 1,
\end{equation}
where
$$B_7=A_1\frac{i_6R(k-1)-2}{2(r-2)}-a_4(r-1)-\frac{r(g+n)+2}{r-2},$$
$$B_8=(i_4-4)\frac{A_1R(k-1)}{2(r-2)}-B_3-\frac{B_2'}{2},$$
$$B_9=i_5\frac{A_1R(k-1)}{2(r-2)}-B_4',$$
$A_1$, $B_3$ are as before,
	\[
	B_2'=A_1\frac{R(k-1)-1}{r-2}+a_2(r-1)+3n+4
	\]
and
	\[
	B_4'=A_1\frac{R(k-1)-1}{r-2}+a_5(r-1)+1.
	\]
	The function $R(.)$ is as in Definition \ref{defofR(k)}.
Thus \eqref{HLB7} is valid provided the conditions
$ (viii)\ B_3+B_2'/2 \leq (i_4-4)\frac{A_1R(k-1)}{2(r-2)}$\\
$(ix) \ B_4' \leq i_5\frac{A_1R(k-1)}{2(r-2)}$\\
$(x)\ A_1\frac{R(k-1)-1}{r-2}-a_4(r-1)-\frac{r(g+n)+2}{r-2}\geq 0$\\
hold. 
We implement the induction procedure as in the proof of Theorem \ref{thml} and find that the conditions $(viii)$--$(x)$ are satisfied at every stage of the induction. Hence \eqref{Xi} is valid.
Also, since $k>m \geq 3,$ \eqref{conc} is valid by \eqref{uk1} with $k$ replaced by $k-1$,
\eqref{67H} and \eqref{HL}.
This completes the induction.
Similarly from \eqref{ind_star} we get for any $n\geq 1,$  
\begin{eqnarray*}\label{lastuk}
&&Z_k \geq\\
&&|j|^{\left(\frac{i_6R(k-1)-2}{2(r-2)}\right)((n+1)r-1)-\frac{nr+2}{r-2}} 
r^{\left(\frac{11r-2}{2(r-2)}\right)((n+1)r-1)-\frac{7nr+4r-2n-4}{2(r-2)}}
h^{(n+1)r-2n-2}.\end{eqnarray*}
The right hand side of the above
inequality goes to infinity as $n \rightarrow \infty.$ This contradiction implies that 
$$
k\leq m.
$$
The rest of the proof is as in the proof of Theorem \ref{thml}.\qed


\section{Proof of Theorem \ref{Binoineq}}\label{proofofBinoineq}

Let $a$ and $b$ be positive integers. 
The discriminant $\Delta$ of the form $F(x , y) = ax^r - by^r$ is equal to
$$
(-1)^{(r-1)(r+2)/2} r^r \left( ab \right)^{r-1}.
$$
 For even degree $r$, notice that if $(x , y)$ is a solution to the inequality $0< |ax^r - by^r| \leq c$, so is $(-x , y)$. Also if $(x , y)$ is related to $\omega$ then $(-x , y)$ is related to $-\omega$.  In Theorem \ref{Binoineq}, we are interested in positive solutions $x$ and $y$, therefore we only need to count the number of solutions related to one $r$-th root of unity in this case (see Lemma \ref{positiveDforms} and its proof).
 
 The above observations and Corollary \ref{cork=3} imply Theorem \ref{Binoineq}.
\qed
   \section{Diagonalizable Thue equations, proof of Theorem \ref{BSchfordiag}}\label{DTE}

\begin{prop}\label{BSCH}
Let $\mathfrak{S}_{r}$ be the set of  diagonalizable binary forms $F(x , y) \in \mathbb{Z}[x , y]$ of degree $r \geq 3$.
Let $\mathfrak{N}$ be an upper bound for the number of solutions of Thue equations
$$
|F(x , y)| = 1
$$
as $F$ varies over the elements of $\mathfrak{S}_{r}$. Then for $h \in \mathbb{N}$ and $G(x , y) \in \mathfrak{S}_{r}$, the equation 
$$|G(x , y)| = h$$
has at most $$\mathfrak{N} \, r^{\omega(h)}$$ primitive solutions, where $\omega(h)$ is the number of prime divisors of $h$. 
\end{prop}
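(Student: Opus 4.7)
The plan is to proceed by induction on $\omega(h)$, applying the Bombieri–Schmidt/Stewart reduction scheme and verifying that it preserves the class $\mathfrak{S}_r$ of diagonalizable forms. The base case $\omega(h)=0$ (so $h=1$) is exactly the hypothesis, giving $\mathfrak{N}$ solutions. For the inductive step, fix a prime $p\mid h$ with $p^e\| h$. I would show that every primitive solution of $|G(x,y)|=h$ arises from a primitive solution of $|G_i(s,t)|=h/p^e$ for one of at most $r$ diagonalizable forms $G_1,\dots,G_r\in\mathfrak{S}_r$. Since $\omega(h/p^e)=\omega(h)-1$, applying the inductive hypothesis to each $G_i$ would then yield the bound $r\cdot\mathfrak{N}\,r^{\omega(h)-1}=\mathfrak{N}\,r^{\omega(h)}$.

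To construct the $G_i$, first suppose $p$ divides every coefficient of $G$. Then $G/p$ itself lies in $\mathfrak{S}_r$: writing $G=(\alpha x+\beta y)^r-(\gamma x+\delta y)^r$ one has $G/p=(p^{-1/r}\alpha x+p^{-1/r}\beta y)^r-(p^{-1/r}\gamma x+p^{-1/r}\delta y)^r$, which has integer coefficients by assumption, and new $j=p^{-2/r}(\alpha\delta-\beta\gamma)\neq 0$. Iterating to exhaust such global $p$-factors, we may assume $G\bmod p$ is a nonzero binary form of degree $r$ over $\mathbb{F}_p$, which has at most $r$ projective roots $(a_j:b_j)\in\mathbb{P}^1(\mathbb{F}_p)$. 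Every primitive solution must have $(x_0:y_0)\equiv(a_j:b_j)\pmod p$ for some $j$. Refining each $\mathbb{F}_p$-root by Hensel-type lifting (unique for simple roots, with a standard careful count for multiple roots) produces at most $r$ classes $(A_i:B_i)\in\mathbb{P}^1(\mathbb{Z}/p^e\mathbb{Z})$ with $p^e\mid G(A_i,B_i)$. For each such class choose $(C_i,D_i)\in\mathbb{Z}^2$ with $A_iD_i-B_iC_i=1$, put $M_i=\begin{pmatrix}A_i & p^eC_i\\ B_i & p^eD_i\end{pmatrix}$ (so $\det M_i=p^e$), and define $G_i(s,t):=p^{-e}G(M_i(s,t))$. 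A direct expansion shows $G_i\in\mathbb{Z}[s,t]$: the coefficient of $s^{r-k}t^k$ in $G(M_i(s,t))$ carries a factor $p^{ek}$ for $k\geq 1$, and $p^e\mid G(A_i,B_i)$ handles the leading $s^r$-coefficient. Moreover,
\[
G_i(s,t)=\bigl(p^{-e/r}(\alpha A_i+\beta B_i)s+p^{e(1-1/r)}(\alpha C_i+\beta D_i)t\bigr)^r-\bigl(p^{-e/r}(\gamma A_i+\delta B_i)s+p^{e(1-1/r)}(\gamma C_i+\delta D_i)t\bigr)^r,
\]
with new $j=(A_iD_i-B_iC_i)(\alpha\delta-\beta\gamma)=\alpha\delta-\beta\gamma\neq 0$, so $G_i\in\mathfrak{S}_r$. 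Primitive $(x_0,y_0)$ with $(x_0:y_0)\equiv(A_i:B_i)\pmod{p^e}$ correspond bijectively to primitive $(s_0,t_0')$ via $(x_0,y_0)=M_i(s_0,t_0')$ with $G_i(s_0,t_0')=\pm h/p^e$, because $\det M_i=p^e$ forces $\gcd(s_0,t_0')\mid\gcd(x_0,y_0)=1$.

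The main obstacle is the number-theoretic bookkeeping bounding by $r$ (rather than some larger quantity like $r^e$) the number of classes $(A_i:B_i)\bmod p^e$ with $p^e\mid G(A_i,B_i)$. For primes $p$ not dividing the discriminant of $G$, this follows immediately from Hensel's lemma applied to the at most $r$ simple roots of $G\bmod p$. For the remaining ``bad'' primes, the argument is more delicate, separating global $p$-factor contributions (which are absorbed into the first case of the construction by iterating the $G\mapsto G/p$ reduction first) from local ramification at multiple roots of $G\bmod p$ (where multiplicity reduces the number of distinct lifts). This technical step is precisely the standard content of the Bombieri–Schmidt reduction; the genuinely new input here, as stressed in the introduction, is the computation in the previous paragraph showing that every intermediate form $G_i$ remains diagonalizable, so that the induction closes within $\mathfrak{S}_r$.
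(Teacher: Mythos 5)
Your proposal follows essentially the same route as the paper: the paper's own proof simply invokes the Bombieri--Schmidt reduction (Lemma 7 of \cite{Bos}) and adds the single observation that the reduction acts through $2\times 2$ matrices, under which a diagonalizable form remains diagonalizable with nonzero $j$ --- precisely the computation you carry out explicitly (your formula for the new $j$ omits a factor $p^{\,e-2e/r}$ coming from $\det M_i=p^e$, but the conclusion $j\neq 0$ is unaffected). One caution: your intermediate claim that at most $r$ classes $(A_i:B_i)\in\mathbb{P}^1(\mathbb{Z}/p^e\mathbb{Z})$ satisfy $p^e\mid G(A_i,B_i)$ is literally false at ramified primes (e.g.\ $x^3-p^2y^3$ has $p$ such classes modulo $p^2$), and the genuine Bombieri--Schmidt lemma covers the solutions by at most $r$ sublattices of index $p^e$ rather than by congruence classes; since you, like the paper, defer that counting step to \cite{Bos}, this does not invalidate the argument.
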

\begin{proof}
This is essentially a special case of Bombieri and Schmidt's result in \cite{Bos}, where they showed that if $N_{n}$ is an upper bound for the number of solutions to the equations $|F(x, y)| = 1$, as $F(x , y)$ varies over irreducible binary forms of degree  $n$ with integer coefficients  then $N_{n}n^{\omega(h)}$ is an upper bound  for the number of primitive solutions to $|F(x, y)| = h$. Bombieri and Schmidt proved this fact by reducing a given Thue equation $|F(x , y)| = h$ modulo every prime factor of the integer $h$.  This reduction is explained in the proof of Lemma 7 of \cite{Bos}, where the form $F(x , y)$ of degree $n$ is reduced to some other binary forms  of degree $n$.  These reduced forms  are obtained through the action of $2 \times 2$   matrices with rational entries and non-zero discriminant  on the binary form $F(x , y)$.  We refer the reader to \cite{Bos} and \cite{Ste} for more details. It is clear (see Section \ref{EF}) that under the action of $2 \times 2$ matrices a diagonalizable form will be reduced to other diagonalizable forms. 
\end{proof}

The following lemma says that if, following the reduction method  that was mentioned  in the proof of Proposition \ref{BSCH}, we reduce an equation $|F(x , y)| = h$  to a family of equations $|\tilde{F}(x , y)|= 1$, then the absolute values of discriminants of the forms $\tilde{F}(x, y)$ will be bounded from below by a function of $h$ and the discriminant of $F$.  This is helpful in our applications, as we will use our main theorems, such as Theorem \ref{2r1}, with assumption on the size of the discriminant of $F$.

\begin{lemma}\label{howDeltachanges}
Let $F(x , y) \in \mathbb{Z}$, with degree $r$ and  discriminant $\Delta(F)$. Assume that  $h\in \mathbb{Z}$, 
with $\gcd(h, \Delta(F))=1$. Then there is a set $W$ of binary forms of degree $r$, with $|W|\leq r^{\omega(h)}$, so that each primitive solution $(x , y)$ to the equation
$$|F(x , y)| = h$$
corresponds to a unique triple $(\tilde{F}, x', y')$ where $\tilde{F} \in W$, 
$$
|\tilde{F}(x', y')| = 1
$$
and
$$
|\Delta(\tilde{F})| \geq  h^{(r-1)(r-2)} |\Delta(F)|.
$$
\end{lemma}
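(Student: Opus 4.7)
The approach follows the Bombieri--Schmidt reduction scheme invoked in the proof of Proposition \ref{BSCH}. The hypothesis $\gcd(h,\Delta(F))=1$ guarantees that for every prime $p \mid h$ the reduction $F \bmod p$ has $r$ distinct linear factors (in an algebraic closure of $\mathbb{F}_p$). If $(x,y)$ is a primitive solution of $|F(x,y)|=h$ and $p^{e_p} \| h$, then exactly one of these linear factors absorbs the full $p^{e_p}$ divisibility at $(x,y)$. Recording this chosen factor for each of the $\omega(h)$ primes dividing $h$ labels each solution by an element of a set of cardinality at most $r^{\omega(h)}$.

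For each such labeling $c$, the plan is to appeal to \cite[Lemma 7]{Bos} to produce a $2 \times 2$ matrix $M_c$ with rational entries and $|\det M_c| = h$, such that $\tilde{F}_c(x',y') := F\!\left(M_c(x',y')^T\right)/h$ is a binary form of degree $r$ with integer coefficients. Every primitive solution $(x,y)$ in class $c$ factors uniquely as $M_c(x',y')^T$ for a primitive integer pair $(x',y')$ satisfying $|\tilde{F}_c(x',y')|=1$, and the assignment $(x,y) \mapsto (\tilde{F}_c,x',y')$ is injective because $\det M_c \neq 0$. Taking $W := \{\tilde{F}_c\}_c$ then gives $|W| \leq r^{\omega(h)}$, establishing the first two conclusions of the lemma.

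The discriminant claim is a direct computation from two standard properties of binary forms of degree $r$: under a linear substitution by a matrix $M$ one has $\Delta(F \circ M) = (\det M)^{r(r-1)} \Delta(F)$, and $\Delta$ is homogeneous of degree $2(r-1)$ in the coefficients, so $\Delta(h \tilde{F}_c) = h^{2(r-1)} \Delta(\tilde{F}_c)$. Combining these with $|\det M_c| = h$ yields
\[
|\Delta(\tilde{F}_c)| \;=\; \frac{|\det M_c|^{r(r-1)}}{h^{2(r-1)}}\,|\Delta(F)| \;=\; h^{r(r-1)-2(r-1)}\,|\Delta(F)| \;=\; h^{(r-1)(r-2)}\,|\Delta(F)|,
\]
which is the required bound (in fact with equality).

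The main technical hurdle is the explicit construction of $M_c$: one must patch together the $p$-adic local choices (one distinguished linear factor modulo a suitable power of $p$, for each $p \mid h$) into a single rational matrix of determinant $\pm h$ whose action on $F$ absorbs the divisibility by $h$ in all coefficients simultaneously. This is precisely what is carried out in the proof of \cite[Lemma 7]{Bos}: Hensel-style lifting of each chosen simple factor (possible because $p \nmid \Delta(F)$), followed by a Chinese Remainder Theorem assembly across the primes dividing $h$. Once $M_c$ is in hand, the rest of the lemma is formal.
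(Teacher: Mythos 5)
Your proposal is correct and follows essentially the same route as the paper: the paper's proof is simply a citation of Stewart \cite{Ste} (which rests on the Bombieri--Schmidt reduction of \cite{Bos}) both for the reduction to at most $r^{\omega(h)}$ forms and for the discriminant inequality, with the hypothesis $\gcd(h,\Delta(F))=1$ forcing Stewart's correction factor $G(h,r,\Delta)$ to equal $1$. You reconstruct the same reduction from \cite[Lemma 7]{Bos} and then obtain the discriminant relation (in fact with equality) from $\Delta(F\circ M_c)=(\det M_c)^{r(r-1)}\Delta(F)$ together with the degree-$2(r-1)$ homogeneity of the discriminant, which is a correct and slightly more self-contained derivation of the same bound.
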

\begin{proof}
See \cite{Ste} for proof. In particular, page 810, as well as Theorem 1 and the definitions presented immediately  after that on page 795 of \cite{Ste}, where it is shown that 
$$
|\Delta(\tilde{F})| \geq  \frac{ h^{(r-2)(r-1)}|\Delta(F)|}{G(h, r, \Delta)^{r (r-1) } },
$$
where the function $G$ is defined on page 795 of \cite{Ste}. Moreover, on page 795 of \cite{Ste}, one can see that
$$
 G(h, r, \Delta) =1,
$$
if $\gcd(\Delta , h)$ = 1.
\end{proof}

\subsection{Proof of Theorem \ref{binomial=}}
Let $a$ and $b$ be positive integers. 
The discriminant $\Delta$ of the form $F(x , y) = ax^r - by^r$ is equal to
$$
(-1)^{(r-1)(r+2)/2} r^r \left( ab \right)^{r-1}.
$$
Note that $D(F) > 0$ and by assumption $\gcd(c , r a b) =1$, we have $\gcd(\Delta, c) = 1$.
By Lemma \ref{howDeltachanges}, we may  apply  Theorem \ref{siegels} to equations $|\tilde{F}(x , y)| =1$, which are obtained from reducing the equation $|F(x , y)| = |ax^r - by^r| = c$, to conclude that each of these new diagonalizable Thue equations have at most $2$ or $4$ solutions, if $r$ is odd or even, respectively. It is important to note here that once we reduce an equation  $|F(x , y)| = c$ with $D(F) > 0$,  to equations $|\tilde{F}(x , y) | =1$, we have $D(\tilde{F}) > 0$, as these reductions are the result of a number of actions of $2 \times 2$  matrices. By definition of $D$ in \eqref{defofD}, it is clear that $D(\tilde{F})$ is the product of $D(F)$ and the squares of the determinants of these real matrices.

 By Proposition \ref{BSCH}, the equation $|ax^r - by^r| = c$ has at most $2 r^{\omega(c)}$ primitive solutions if $r$ is odd,  and at most $4 r^{\omega(c)}$ primitive solutions  if $r$ is even.
 For even degree $r$, notice that  $(x , y)$ is a solution to the inequality $ |ax^r - by^r| = c$ if and only if  $(-x , y)$.   In Theorem \ref{binomial=}, we are interested in positive solutions $x$ and $y$, therefore if $r$ is even, we can divide the total number of solutions by $2$.
 
 Now assume that 
 $$
 ab \geq 2^{r} r^{7r/(r-4)}.
 $$
Instead of Siegel's Theorem \ref{siegels}, we will use our Corollary \ref{cork=3}. Similarly, we conclude that  $|ax^r - by^r| = c$ has $3 \, r^{\omega(c)}$ primitive solutions  if $r$ is odd and at most $6\,  r^{\omega(c)}$ primitive solutions if $r$ is even, and therefore the number of positive solutions $x$ and $y$ is bounded by $3 r^{\omega(c)}$.

\qed

\subsection{Proof of Theorem \ref{BSchfordiag}}
By Lemma \ref{howDeltachanges}, every primitive solution $(x , y)$ to the equation
$$|F(x , y)| = h$$
corresponds to a unique triple $(\tilde{F}, x', y')$ where $\tilde{F} \in W$, 
$$
|\tilde{F}(x', y')| = 1
$$
and
$$
|\Delta(\tilde{F})| \geq h^{(r-1)(r-2) } |\Delta(F)|,
$$
where the binary forms  $\tilde{F}$ are diagonalizable, by Proposition \ref{BSCH}. Using the assumption 
\[
		|\Delta(F)| \geq 2^{r^2-r} r^{r+7r(r-1)/(r-4)},
	\]
in the statement of Theorem \ref{BSchfordiag},
we conclude that
$$
|\Delta(\tilde{F})| >  2^{r^2-r} r^{r+7r(r-1)/(r-4)}.
$$
 By Corollary \ref{cork=3}, and taking $h=1$, we obtain an upper bound for the number of solutions to each Thue equation 
 $$
|\tilde{F}(x', y')| = 1.
$$ 
Since we have  at most $r^{\omega(h)}$ equations $
|\tilde{F}(x', y')| = 1$, we obtain the desired result.
\qed

\section*{Acknowledgements}

The authors are indebted to the anonymous referee for reading this manuscript carefully and providing several insightful comments, which improved the content and the  presentation. In particular, the referee's remarks improved our  Theorems 1.7 and 1.8, as well as Lemma 7.2 and  some subsequent lemmas.

The present work  was started when N. Saradha visited the University of Oregon from June 15 to June 26, 2015. She would like to thank Shabnam Akhtari for the invitation. The computations in the paper were done by N. Saradha and Divyum Sharma at TIFR, Mumbai. They thank their home institution for providing computing facilities.

Shabnam Akhtari's research is partly supported by the National Science Foundation grant DMS-1601837.


\end{document}